\documentclass[a4paper]{article}
\usepackage[a4paper, margin = 1.1in, bottom = 1in]{geometry}

\usepackage{hyperref,authblk,amsmath,amssymb,amsthm,color,tikz,comment,graphics}
\usepackage[font={small},width=1.14\linewidth]{caption}
\usetikzlibrary{arrows}
\usetikzlibrary{arrows.meta}

\usepackage{mathrsfs,dsfont}
\newcommand{\C}{\mathds{C}}
\newcommand{\R}{\mathds{R}}

\parskip=2mm
\parindent=0mm
\newtheorem{nummer}{ }
\newtheorem{theorem}[nummer]{\bf Theorem}
\newtheorem{lemma}[nummer]{Lemma}
\newtheorem{corollary}[nummer]{Corollary}

\theoremstyle{definition}
\newtheorem{definition}[nummer]{Definition}

\definecolor{darkgreen}{rgb}{0.,0.6,0.}
\def\epsilon{\varepsilon}
\newcommand{\intersect}[1][.7ex]{%
  \tikz[baseline=-2*#1]{%
  \draw[thin] (0,0) arc[start angle=90, end angle=-90, radius=#1];
  \draw[thin] (1.5*#1,-2*#1) arc[start angle=270, end angle=90, radius=#1];
  }%
}

\newcommand{\touch}[1][.7ex]{%
  \tikz[baseline=-2*#1]{%
  \draw[thin] (0,0) arc[start angle=90, end angle=-90, radius=#1];
  \draw[thin] (2*#1,-2*#1) arc[start angle=270, end angle=90, radius=#1];
  }%
}

\begin{document}

{\LARGE\bf Closing Theorems for Circle Chains}\\[2mm]
{\small Norbert Hungerb\"uhler\\ Department of Mathematics, ETH Z\"urich, 8092 Z\"urich, Switzerland\\
norbert.hungerbuehler@math.ethz.ch\\
ORCHID 0000-0001-6191-0022}

\begin{center}
\parbox{.9\linewidth}{\small
{\bf Abstract.} We consider closed chains of circles $C_1,C_2,\ldots,C_n,C_{n+1}=C_1$ such that two neighbouring circles $C_i,C_{i+1}$ 
intersect or touch each other with $A_i$ being a common point. We formulate conditions such that
a polygon with vertices $X_i$ on $C_i$, and $A_i$ on the (extended) side $X_iX_{i+1}$, is closed for
every position of the starting point $X_1$ on $C_1$. Similar results apply to open chains of circles. It turns out that the intersection of 
the sides $X_iX_{i+1}$ and $X_jX_{j+1}$ of the polygon lies on a circle $C_{ij}$  through $A_i$ and $A_j$
with the property that $C_{ij}, C_{jk}$ and $C_{ki}$ pass through a common point.
The six circles theorem of Miquel and Steiner's quadrilateral Theorem appear as special cases
of the general results.}
\end{center}

{\bf Key words.} closing theorems, circle chains, Miquel's six circles theorem, Steiner's quadrilateral theorem

{\bf Mathematics Subject Classification.} 51M04, 51M15

\section{Introduction}
The treasure trove of geometry contains a wide spectrum of closing theorems. 
Among the best known are Steiner's closing theorem (Figure~\ref{fig-steiner} on the left,~\cite[\S\,6.5]{coxeter1}), which belongs to the M\"obius geometry, and 
Poncelet's porism (Figure~\ref{fig-steiner} on the right,~\cite{flatto,drago,hhp}), one of the deepest results of projective geometry. 
\begin{figure}[h!]
\begin{center}
\begin{tikzpicture}[line cap=round,line join=round,x=3.2,y=3.2]
\draw [line width=0.8pt,color=blue] (20.902639991454564,52.3627225046939) circle (10.942812444877323);
\draw [line width=0.8pt,color=blue] (24.307761937980693,60.89281514626856) circle (28.13792902779854);

\draw [line width=0.8pt,color=red] (23.119015762188283,37.18276352257279) circle (4.398096141512681);
\draw [line width=0.8pt,color=red] (14.850437583905874,38.692555746674806) circle (4.007191340205413);
\draw [line width=0.8pt,color=red] (8.015786233901109,43.8038632996496) circle (4.527322352865745);
\draw [line width=0.8pt,color=red] (3.693217436107717,53.75589474066403) circle (6.322909479431505);
\draw [line width=0.8pt,color=red] (8.577900222399228,69.5317793550592) circle (10.191891799706049);
\draw [line width=0.8pt,color=red] (31.5036155162772,74.01889780138099) circle (13.168815674325444);
\draw [line width=0.8pt,color=red] (41.37095539655238,53.544153574099894) circle (9.55957070670588);
\draw [line width=0.8pt,color=red] (32.953008088949915,40.4867744628995) circle (5.976097483854311);

\draw [line width=0.5pt,color=red,dash pattern=on 2pt off 2pt] (20.86654897841823,37.21178158450317) circle (4.2081714614018715);
\draw [line width=0.5pt,color=red,dash pattern=on 2pt off 2pt] (12.972905565294186,39.6384991925038) circle (4.050070765763476);
\draw [line width=0.5pt,color=red,dash pattern=on 2pt off 2pt] (6.5615107897159675,45.787854613432025) circle (4.83365340044131);
\draw [line width=0.5pt,color=red,dash pattern=on 2pt off 2pt] (3.5655555946234427,57.331228868387484) circle (7.092168865590082);
\draw [line width=0.5pt,color=red,dash pattern=on 2pt off 2pt] (12.971963886823229,73.23175906834332) circle (11.382337287465063);
\draw [line width=0.5pt,color=red,dash pattern=on 2pt off 2pt] (36.84321898265364,69.85879063762884) circle (12.726037520572213);
\draw [line width=0.5pt,color=red,dash pattern=on 2pt off 2pt] (39.970938014545965,48.93390319939474) circle (8.431314408372696);
\draw [line width=0.5pt,color=red,dash pattern=on 2pt off 2pt] (30.329878680537533,38.988031008643425) circle (5.420414679240572);
\end{tikzpicture}
\begin{tikzpicture}[line cap=round,line join=round,x=30,y=30]
\draw [rotate around={-6.3401917459099115:(0.99,0.87)},line width=0.8pt,color=darkgreen] (0.99,0.87) ellipse (2.4826515605076316 and 1.9477573696153683);
\draw [rotate around={148.49233716521107:(1.2382399646487747,0.3486206623337557)},line width=0.8pt,color=blue] (1.2382399646487747,0.3486206623337557) ellipse (3.189458639461491 and 2.657041844942135);
\draw [line width=0.5pt,dash pattern=on 2pt off 2pt,color=red] (-1.1730928663927847,-0.9952286406115576)-- (4.0226490672376265,-1.2044865241168552);
\draw [line width=0.5pt,dash pattern=on 2pt off 2pt,color=red] (4.0226490672376265,-1.2044865241168552)-- (3.139853233240123,2.2372264069124297);
\draw [line width=0.5pt,dash pattern=on 2pt off 2pt,color=red] (3.139853233240123,2.2372264069124297)-- (0.4054941135684272,3.1428397457375215);
\draw [line width=0.5pt,dash pattern=on 2pt off 2pt,color=red] (0.4054941135684272,3.1428397457375215)-- (-1.6354056674394561,1.7141041496119653);
\draw [line width=0.5pt,dash pattern=on 2pt off 2pt,color=red] (-1.6354056674394561,1.7141041496119653)-- (-1.1730928663927847,-0.9952286406115576);
\draw [line width=0.8pt,color=red] (1.8310772816612164,-2.4623111350583935)-- (3.9617494106224274,1.1980999166036632);
\draw [line width=0.8pt,color=red] (3.9617494106224274,1.1980999166036632)-- (1.6773183684541484,3.0300026954767403);
\draw [line width=0.8pt,color=red] (1.6773183684541484,3.0300026954767403)-- (-1.0542931411641128,2.522885135232743);
\draw [line width=0.8pt,color=red] (-1.0542931411641128,2.522885135232743)-- (-1.7760094450970916,0.3547276739543823);
\draw [line width=0.8pt,color=red] (-1.7760094450970916,0.3547276739543823)-- (1.8310772816612164,-2.4623111350583935);
\begin{scriptsize}
\draw [fill=red] (0.4054941135684272,3.1428397457375215) circle (1.5pt);
\draw [fill=red] (3.139853233240123,2.2372264069124297) circle (1.5pt);
\draw [fill=red] (4.0226490672376265,-1.2044865241168552) circle (1.5pt);
\draw [fill=red] (-1.1730928663927847,-0.9952286406115576) circle (1.5pt);
\draw [fill=red] (-1.6354056674394561,1.7141041496119653) circle (1.5pt);
\draw [fill=red] (1.8310772816612164,-2.4623111350583935) circle (1.5pt);
\draw [fill=red] (-1.7760094450970916,0.3547276739543823) circle (1.5pt);
\draw [fill=red] (3.9617494106224274,1.1980999166036632) circle (1.5pt);
\draw [fill=red] (1.6773183684541484,3.0300026954767403) circle (1.5pt);
\draw [fill=red] (-1.0542931411641128,2.522885135232743) circle (1.5pt);
\end{scriptsize}
\end{tikzpicture}
\caption{Steiner's closing theorem on the left: 
Two blue support circles are given, into which a chain of circles is fitted so that neighbouring circles touch each other. 
If the chain closes in a certain position (red), it closes in every position (dashed). Poncelet's porism on the right:
A chain of tangents to a conic (green) with vertices on a second conic (blue) is drawn.
If the chain closes in a certain position (red), it closes in every position (dashed).}\label{fig-steiner}
\end{center}
\end{figure}
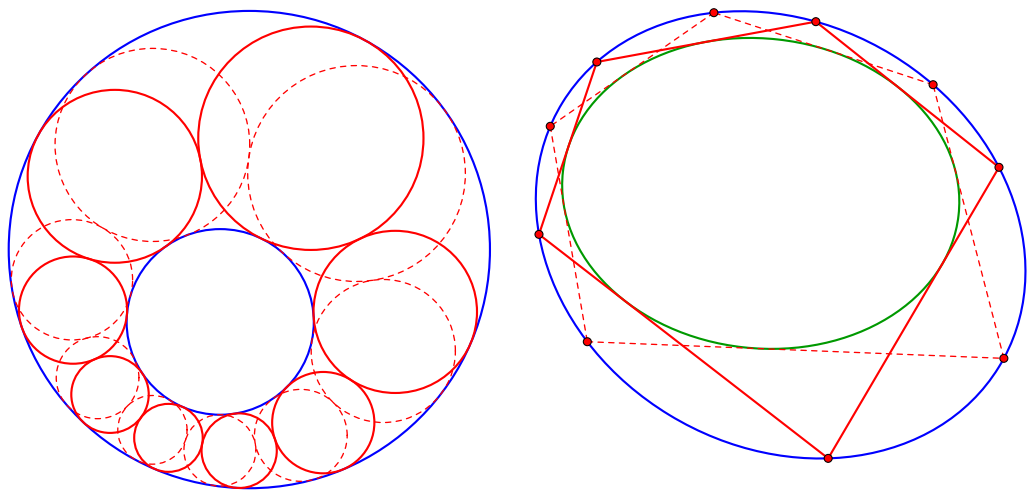

Other examples are the classical theorems of Pappus and Desargues, which have been known for a long time and are fundamental to the axiomatics of geometry. 
These theorems can be formulated in the same style as the theorems of Steiner and Poncelet. They then show their closing character much more clearly than usual.
Figure~\ref{fig-pappus} illustrates this using the example of the Pappus hexagon theorem. In this formulation, the theorem has very universal generalizations 
(see~\cite{nh-pappus,bh}).
Another well-known closing result is the Butterfly theorem in Figure~\ref{fig-pappus} on the right.  This theorem is also only a special case of a much more general closing result (see~\cite{hhs}).
\begin{figure}[ht!]
\begin{center}
\begin{tikzpicture}[line cap=round,line join=round,x=28,y=28]
\clip(1.0853464224195568,-2.9) rectangle (7.5472620504372285,3.1727244978543965);
\draw [line width=.8pt,domain=1.0853464224195568:7.5472620504372285] plot(\x,{(--3.3344--2.88*\x)/8.24});
\draw [line width=.8pt,domain=1.0853464224195568:7.5472620504372285] plot(\x,{(-1.218-4.62*\x)/12.6});
\draw [color=red,line width=.8pt] (2.186452981664151,1.1688573528146546)-- (3.549020753602968,-1.3979742763210883);
\draw [color=red,line width=.8pt] (3.549020753602968,-1.3979742763210883)-- (6.899196493869701,2.816029842517565);
\draw [color=red,line width=.8pt] (6.899196493869701,2.816029842517565)-- (2.2398931691857444,-0.9179608287014398);
\draw [color=red,line width=.8pt] (2.2398931691857444,-0.9179608287014398)-- (4.384641577686875,1.9371562795798785);
\draw [color=red,line width=.8pt] (4.384641577686875,1.9371562795798785)-- (5.303396103508056,-2.0412452379529538);
\draw [color=red,line width=.8pt] (5.303396103508056,-2.0412452379529538)-- (2.186452981664151,1.1688573528146546);
\draw [line width=0.8pt,domain=1.0853464224195568:7.5472620504372285] plot(\x,{(-0.23777535955809395--0.06676024673877502*\x)/0.49170980763435024});
\draw [color=red,line width=0.5pt,dash pattern=on 2pt off 2pt] (1.936548690442898,1.081512163649945)-- (4.008735027470491,-1.5665361767391803);
\draw [color=red,line width=0.5pt,dash pattern=on 2pt off 2pt] (4.008735027470491,-1.5665361767391803)-- (5.821691588092215,2.4394261861293174);
\draw [color=red,line width=0.5pt,dash pattern=on 2pt off 2pt] (5.821691588092215,2.4394261861293174)-- (2.399969998974154,-0.9766556662905234);
\draw [color=red,line width=0.5pt,dash pattern=on 2pt off 2pt] (2.399969998974154,-0.9766556662905234)-- (3.80911000192173,1.735999612322158);
\draw [color=red,line width=0.5pt,dash pattern=on 2pt off 2pt] (3.80911000192173,1.735999612322158)-- (6.440646425073458,-2.458237022526935);
\draw [color=red,line width=0.5pt,dash pattern=on 2pt off 2pt] (6.440646425073458,-2.458237022526935)-- (1.936548690442898,1.081512163649945);
\begin{small}
\draw[color=black] (1.3,1.06) node[yshift=1pt] {$\ell$};
\draw [fill=white] (2.8576618935811826,-0.09557902998259409) circle (1.5pt);

\draw [fill=white] (3.349371701215533,-0.028818783243819066) circle (1.5pt);

\draw [fill=white] (4.793379065273576,0.1672364640186292) circle (1.5pt);

\draw [fill=red] (2.186452981664151,1.1688573528146546) circle (1.5pt) node[above,yshift=-.5pt,red] {$X$};
\draw [fill=red] (3.549020753602968,-1.3979742763210883) circle (1.5pt);
\draw [fill=red] (6.899196493869701,2.816029842517565) circle (1.5pt);
\draw [fill=red] (2.2398931691857444,-0.9179608287014398) circle (1.5pt);
\draw [fill=red] (4.384641577686875,1.9371562795798785) circle (1.5pt);
\draw [fill=red] (5.303396103508056,-2.0412452379529538) circle (1.5pt);
\draw [fill=red] (1.936548690442898,1.081512163649945) circle (1.5pt);
\draw [fill=red] (4.008735027470491,-1.5665361767391803) circle (1.5pt);
\draw [fill=red] (5.821691588092215,2.4394261861293174) circle (1.5pt);
\draw [fill=red] (2.399969998974154,-0.9766556662905234) circle (1.5pt);
\draw [fill=red] (3.80911000192173,1.735999612322158) circle (1.5pt);
\draw [fill=red] (6.440646425073458,-2.458237022526935) circle (1.5pt);
\end{small}
\end{tikzpicture}\qquad
\begin{tikzpicture}[line cap=round,line join=round,x=36,y=36]
\clip(1.1047515744556455,-1.5621325989512826) rectangle (6.848676577138018,2.794324033150664);
\draw [rotate around={15.899522588456362:(4.044632107923147,0.6694598851989352)},line width=1pt] (4.044632107923147,0.6694598851989352) ellipse (2.4967319523547995 and 1.8608477180564371);
\draw [line width=0.8pt,domain=1.1047515744556455:6.848676577138018] plot(\x,{(-0.6526122654160171--0.7300696378479739*\x)/2.3409550964314114});
\draw [line width=0.8pt,color=red] (2.6323928159033807,2.0464485997043282)-- (3.7431921522022265,-1.2454222772792263);
\draw [line width=0.8pt,color=red] (3.7431921522022265,-1.2454222772792263)-- (5.484682990104053,2.376715431702518);
\draw [line width=0.8pt,color=red] (5.484682990104053,2.376715431702518)-- (5.312737522267382,-0.7974733948639078);
\draw [line width=0.8pt,color=red] (5.312737522267382,-0.7974733948639078)-- (2.6323928159033807,2.0464485997043282);
\draw [line width=0.5pt,dash pattern=on 2pt off 2pt,color=red] (2.246758947240675,1.740295055267992)-- (4.540898735427419,-1.1240175573683284);
\draw [line width=0.5pt,dash pattern=on 2pt off 2pt,color=red] (4.540898735427419,-1.1240175573683284)-- (5.160156689581066,2.4905762094337804);
\draw [line width=0.5pt,dash pattern=on 2pt off 2pt,color=red] (5.160156689581066,2.4905762094337804)-- (5.88306514829348,-0.3619796325604876);
\draw [line width=0.5pt,dash pattern=on 2pt off 2pt,color=red] (5.88306514829348,-0.3619796325604876)-- (2.246758947240675,1.740295055267992);
\begin{small}
\draw[color=black] (1.7,1.3) node {$C$};
\draw [fill=red] (2.6323928159033807,2.0464485997043282) circle (1.5pt);
\draw[fill=red] (2.58,2.25) node {$X$};
\draw [fill=red] (3.7431921522022265,-1.2454222772792263) circle (1.5pt);
\draw [fill=red] (5.484682990104053,2.376715431702518) circle (1.5pt);
\draw [fill=red] (5.312737522267382,-0.7974733948639078) circle (1.5pt);
\draw [fill=white] (3.091657835869947,0.685409666171449) circle (1.5pt);
\draw [fill=white] (5.4326129323013586,1.415479304019423) circle (1.5pt);
\draw[color=black] (1.25,0.25) node {$\ell$};
\draw [fill=red] (2.246758947240675,1.740295055267992) circle (1.5pt);
\draw [fill=red] (4.540898735427419,-1.1240175573683284) circle (1.5pt);
\draw [fill=white] (4.950193836292086,1.2650280906444482) circle (1.5pt);
\draw [fill=red] (5.160156689581066,2.4905762094337804) circle (1.5pt);
\draw [fill=red] (5.88306514829348,-0.3619796325604876) circle (1.5pt);
\draw [fill=white] (3.728082747110193,0.8838904081450525) circle (1.5pt);

\draw [fill=white] (1.5979801937539222,0.2195789900924332) circle (1.5pt);
\draw [fill=white] (6.3019513829672045,1.6865983907598925) circle (1.5pt);
\end{small}
\end{tikzpicture}

\caption{On the left: The hexagon theorem of Pappus formulated as closing theorem. If
the Pappus hexagon (red)  closes for one position  of the starting point $X$ on the line $\ell$, it closes 
for any other starting point (dashed). On the right: 
The Butterfly theorem. If the quadrangle (red) with starting point $X$ on a conic $C$ and passing through the four points
on a line $\ell$ is closed, then it closes for any other starting point (dashed).
 }\label{fig-pappus}
\end{center}
\end{figure}
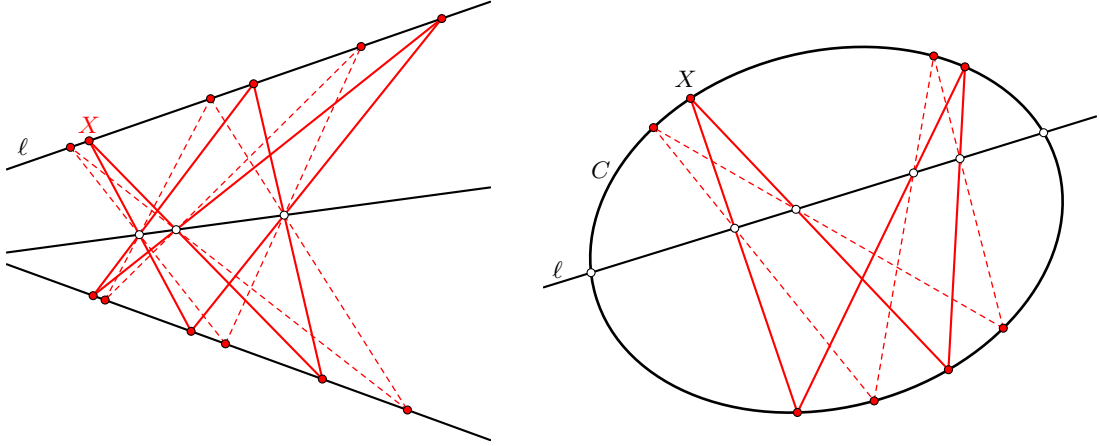
Other examples are the closing theorem of Emch~\cite{emch}, and the zig-zag theorem~\cite{bottema}.
In this article we investigate a family of new closing theorems for circle chains.
\section{A closing theorem for circle chains}
We will use the notation $C_1\mathbin{\intersect} C_2$ for two circles $C_1$ and $C_2$ that intersect either in two points or that touch each other.
To describe and prove the closing theorem, we will use the following map. 
\begin{definition}
Let  $C_1\mathbin{\intersect} C_2$ be two intersecting or touching circles and $A$ 
a common point of $C_1$ and $C_2$. Then $\varphi_A: C_1\to C_2, X\mapsto \varphi_A(X)$, is defined as follows: 
If $X\neq A$, then the points $X,\varphi_A(X)$, and $A$ are collinear. If $X=A$, then the line through the points $A$ and $\varphi_A(X)$ is the tangent to $C_1$ in $A$ (see Figure~\ref{fig-varphi}). The point $A$ will be called {\em pivot\/} of the map $\varphi_A$.
\end{definition}
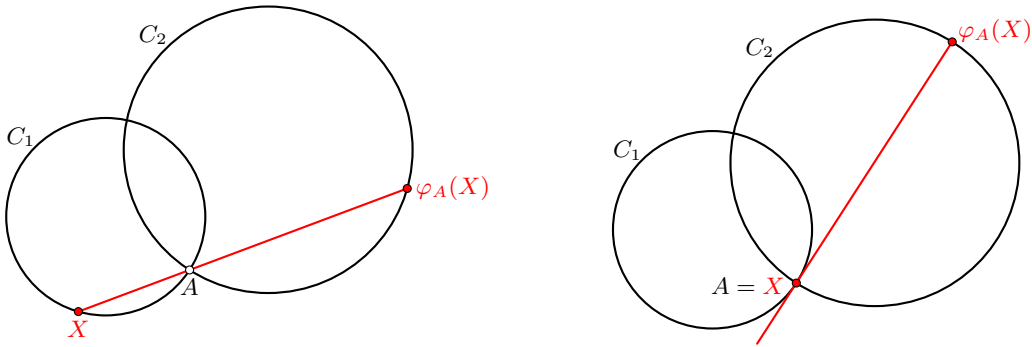
\begin{figure}[h!]
\begin{center}
\begin{tikzpicture}[line cap=round,line join=round,x=33,y=33]
\draw [line width=0.8pt] (1.5636224153067082,-0.5620894630386035) circle (1.128717693146846);
\draw [line width=0.8pt] (3.404928522943406,0.20296025774002305) circle (1.637740817958632);
\draw [line width=0.8pt,red] (1.25304140955378,-1.6472360290324497)-- (4.981804998062763,-0.23936944850065833);
\begin{small}
\draw [fill=white] (2.513894091177846,-1.1711767908166462) circle (1.5pt) node[below] {$A$};
\draw[color=black] (.6,0.3481389298034233) node {$C_1$};
\draw[color=black] (2.1,1.5177708734553814) node {$C_2$};
\draw [fill=red] (1.25304140955378,-1.6472360290324497) circle (1.5pt)  node[below,yshift=-0pt,xshift=0pt,red] {$X$};
\draw [fill=red] (4.981804998062763,-0.23936944850065833) circle (1.5pt) node[right,red] {$\varphi_A(X)$};
\end{small}
\end{tikzpicture}\qquad\qquad
\begin{tikzpicture}[line cap=round,line join=round,x=33,y=33]
\draw [line width=0.8pt] (1.5636224153067082,-0.5620894630386035) circle (1.128717693146846);
\draw [line width=0.8pt] (3.404928522943406,0.20296025774002305) circle (1.637740817958632);
\draw [line width=0.8pt,red] (4.281410085476945,1.586425131651382)-- (2.069754987026554,-1.8641033902936175);
\begin{small}
\draw [fill=red] (2.513894091177846,-1.1711767908166462) circle (1.5pt) node[left,yshift=-1pt,xshift=-1pt] {$A=\textcolor{red}X$};
\draw[color=black] (.6,0.3481389298034233) node {$C_1$};
\draw[color=black] (2.1,1.5177708734553814) node {$C_2$};
\draw [fill=red] (4.281410085476945,1.586425131651382) circle (1.5pt)  node[right,yshift=4pt,xshift=-1pt,red] {$\varphi_A(X)$};
\end{small}
\end{tikzpicture}
\caption{The map $\varphi_A:C_1\to C_2, X\mapsto \varphi_A(X)$.}\label{fig-varphi}
\end{center}
\end{figure}
Using such maps we can state the first theorem for a closed chain of $n$ circles as follows. Notice that  indices will be read cyclically throughout.
\begin{theorem}\label{thm-1}
Let $C_1\mathbin{\intersect} C_2 \mathbin{\intersect} C_3\mathbin{\intersect} \cdots \mathbin{\intersect}C_n\mathbin{\intersect}C_1$ be a closed
chain of circles. For $i=1,\ldots, n$ let $A_i$ be a common point of $C_i$ and $C_{i+1}$. 
Let $\varphi:=\varphi_{A_n}\circ\cdots\circ\varphi_{A_2}\circ\varphi_{A_1}$. Then the following holds: If
$\varphi(X)=X$ for one point $X\in C_1$, then $\varphi$ is the identity map on $C_1$.
\end{theorem}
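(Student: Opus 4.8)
The plan is to show that $\varphi$ is a projective transformation of the conic $C_1$ and then to exhibit three distinct fixed points of $\varphi$, which forces it to be the identity.

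First I would establish that each map $\varphi_{A_i}\colon C_i\to C_{i+1}$ is a projectivity. Indeed, $\varphi_{A_i}$ is nothing but the perspectivity from the common point $A_i$. Parametrising both conics $C_i$ and $C_{i+1}$ by the pencil of lines through $A_i$ — a line $\ell$ through $A_i$ being sent to its second intersection with the respective conic, with the tangent direction covering the case $X=A_i$ — one sees that $\varphi_{A_i}$ is the composition of two projective isomorphisms between this pencil and the two conics, hence itself projective. Consequently $\varphi=\varphi_{A_n}\circ\cdots\circ\varphi_{A_1}\colon C_1\to C_1$ is a projectivity of $C_1$. Since a smooth conic is, over $\C$, isomorphic to a projective line, I may regard $\varphi$ as a M\"obius transformation of that line.

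The decisive step is to complexify and bring in the two circular points at infinity $I=(1:\mathrm i:0)$ and $J=(1:-\mathrm i:0)$, which lie on every circle. For each $i$, both $I$ and $J$ are common points of $C_i$ and $C_{i+1}$ different from the finite real pivot $A_i$. But a perspectivity from $A_i$ fixes every common point of $C_i$ and $C_{i+1}$ other than $A_i$: if $P\in C_i\cap C_{i+1}$ and $P\neq A_i$, then the line $A_iP$ meets $C_{i+1}$ in $A_i$ and in $P$, so $\varphi_{A_i}(P)=P$. Hence each $\varphi_{A_i}$ fixes $I$ and $J$, and therefore so does $\varphi$.

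It remains to combine these facts. Suppose $\varphi(X)=X$ for some point $X$ on the real circle $C_1$. Then $X$ is distinct from $I$ and $J$, since $X$ is real while $I$ and $J$ are not. Thus the M\"obius transformation $\varphi$ fixes the three distinct points $I,J,X$, and a projectivity of a projective line with three fixed points is the identity. Hence $\varphi=\mathrm{id}$ on the complexified conic, in particular on $C_1$, as claimed. I expect the only delicate points to be the verification that $\varphi_{A_i}$ is genuinely projective — including the tangent convention at the pivot — and the passage to the complexified conic that legitimises counting the non-real fixed points $I,J$ together with the real one.
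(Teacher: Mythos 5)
Your proof is correct, but it takes a genuinely different route from the paper. The paper's argument is entirely elementary and real: it follows two chains $X_i$ and $X_i'$ simultaneously and uses the inscribed-angle theorem to show that the central angles $\sphericalangle X_iM_iX_i'$ are the same for all $i$ (namely $2\sphericalangle X_1A_1X_1'$), so the two polygons stay at a fixed ``angular distance'' all along the chain; if one closes, so does the other. You instead complexify, observe that each $\varphi_{A_i}$ is a perspectivity between the conics $C_i$ and $C_{i+1}$, hence a projectivity, and that it fixes the circular points $I,J$ (since they are common points of every pair of circles, distinct from the real pivot $A_i$, and a perspectivity fixes every common point other than its center); then $\varphi$ is a M\"obius transformation with the three distinct fixed points $I,J,X$ and must be the identity. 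Your handling of the delicate points (the tangent convention at the pivot, and the fact that a line through $A_i$ and $I$ cannot be tangent to $C_{i+1}$ at $A_i$ because it already meets the conic in the second point $I$) is sound. What the two approaches buy is different: the paper's angle-invariant is not a throwaway device --- it is reused verbatim later (it is the engine behind Lemma~\ref{lem-lighthouse} and, via the transfer angle, Theorem~\ref{thm-closingcondition}), and it yields the quantitative statement that $\varphi$ is a rotation by the fixed total transfer angle. Your argument is less quantitative but more structural: it identifies $\varphi$ as an element of the one-parameter group of projectivities of $C_1$ fixing $I$ and $J$ (any non-identity element of which is fixed-point-free on the real circle), it makes the M\"obius-invariance of the whole setup --- which the paper only remarks on after Lemma~\ref{lem-2} --- automatic, and it generalizes immediately to chains of conics all passing through two fixed common points, with pivots at further intersections of consecutive conics.
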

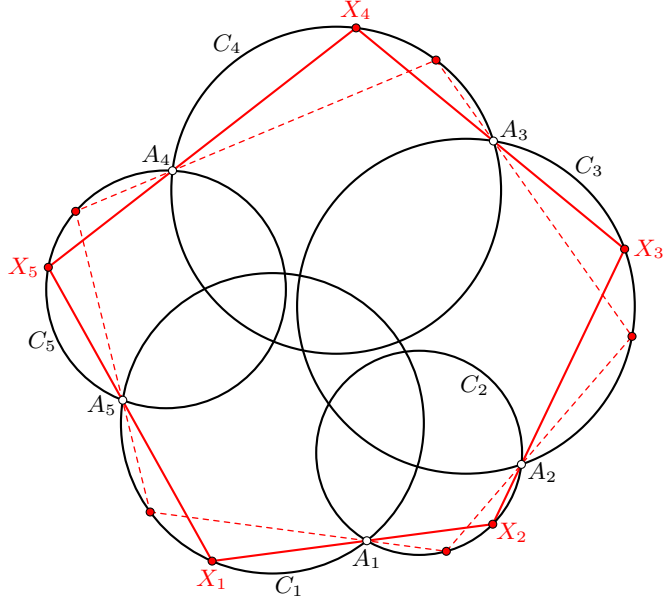
\begin{figure}[h!]
\begin{center}
\definecolor{red}{rgb}{1.,0.,0.}
\begin{tikzpicture}[line cap=round,line join=round,x=33,y=33]
\draw[line width=0.8pt,color=red] (1.6250382144525162,-3.7111624021193066) -- (4.807637352660276,-3.289299430790333) -- (6.302898026252813,-0.14352042511744068) -- (3.25755278530266,2.38498455825086) -- (-0.23220385792631373,-0.35216546773430646) -- cycle;
\draw[line width=0.5pt,dash pattern=on 2pt off 2pt,color=red] (0.9229338037076547,-3.149948458016592) -- (4.28119136297191,-3.599207981893942) -- (6.38673291962796,-1.1420375650823862) -- (4.16424290358626,2.0170457096106893) -- (0.07812198730851044,0.28779839471175905) -- cycle;
\draw [line width=0.8pt] (3.9729195054241004,-2.47344740849099) circle (1.1672053832914422);
\draw [line width=0.8pt] (4.503474201513647,-0.7973768912990093) circle (1.914537641156835);
\draw [line width=0.8pt] (3.0323907259926304,0.5290098489248604) circle (1.8695828611244967);
\draw [line width=0.8pt] (1.1031009220306414,-0.6044479109028101) circle (1.3589279916562766);
\draw [line width=0.8pt] (2.308907049506885,-2.135821692797642) circle (1.7173744303455507);
\begin{small}
\draw[color=black] (4.6,-1.7) node {$C_2$};
\draw[color=black] (5.9,.8) node {$C_3$};
\draw[color=black] (1.7964394453294776,2.2) node {$C_4$};
\draw[color=black] (-.3,-1.2) node {$C_5$};
\draw [fill=white] (0.611990829875993,-1.8715296619713093) circle (1.5pt)  node[left,yshift=-2pt,xshift=1pt] {$A_5$};
\draw[color=black] (2.5,-4) node {$C_1$};
\draw [fill=red] (1.6250382144525162,-3.7111624021193066) circle (1.5pt) node[below,red] {$X_1$};
\draw [fill=white] (3.3795802546498077,-3.4785926419150575) circle (1.5pt) node[below] {$A_1$};
\draw [fill=red] (4.807637352660276,-3.289299430790333) circle (1.5pt) node[right,yshift=-3pt,xshift=-2pt,red] {$X_2$};
\draw [fill=white] (5.132620830031038,-2.6055884107932252) circle (1.5pt)  node[right,xshift=-1pt,yshift=-3pt] {$A_{2}$};
\draw [fill=red] (6.302898026252813,-0.14352042511744068) circle (1.5pt) node[right,red] {$X_3$};
\draw [fill=white] (4.815321087840784,1.0915926423077253) circle (1.5pt) node[right,yshift=4pt,xshift=-1pt] {$A_3$};
\draw [fill=red] (3.25755278530266,2.38498455825086) circle (1.5pt)  node[above,red] {$X_4$};
\draw [fill=white] (1.1762153658246568,0.7525117659541158) circle (1.5pt) node[above,xshift=-6pt,yshift=-1pt] {$A_4$};
\draw [fill=red] (-0.23220385792631373,-0.35216546773430646) circle (1.5pt) node[left,red] {$X_5$};
\draw [fill=red] (0.9229338037076547,-3.149948458016592) circle (1.5pt);
\draw [fill=red] (4.28119136297191,-3.599207981893942) circle (1.5pt);
\draw [fill=red] (6.38673291962796,-1.1420375650823862) circle (1.5pt);
\draw [fill=red] (4.16424290358626,2.0170457096106893) circle (1.5pt);
\draw [fill=red] (0.07812198730851044,0.28779839471175905) circle (1.5pt);
\end{small}
\end{tikzpicture}
\caption{Theorem~\ref{thm-1} for five circles. If the polygon with starting point $X_1$ on $C_1$ closes, it closes for every starting point on $C_1$ (dashed).}\label{fig-thm1}
\end{center}
\end{figure}

The statement of Theorem~\ref{thm-1} can be formulated  in a geometric way visually as follows:
Let $X_1=X\in C_1$ and iteratively $X_{i+1}:=\varphi_{A_i}(X_{i}) \in C_{i+1}$, for $i=1,2,\ldots n$. 
Then we have: If $X=X_{n+1}$, then the corresponding polygon $X_1X_2\ldots X_nX_1$ through the
pivots $A_1,A_2,\ldots,A_n$ closes for every starting point $X_1$ on $C_1$ (see Figure~\ref{fig-thm1}).
\begin{proof}[Proof of Theorem~\ref{thm-1}]
Let $M_i$ denote the center of the circle $C_i$ (see Figure~\ref{fig-proof1}).
Take two chains $X_{i+1}:=\varphi_{A_i}(X_{i})$, and $X'_{i+1}:=\varphi_{A_i}(X'_{i})$ for $i=1,2,\ldots n$, for
two initial points $X_1,X_1'$ on $C_1$. 
Then we have that for all $i$ the angles $\sphericalangle X_iM_iX_i'$ have the same size,
namely $2\sphericalangle X_1A_1X_1'=:2\epsilon$. Then, from $\sphericalangle X_{n+1}M_1X_{n+1}' = 2\epsilon$
and $X_1=X_{n+1}$ it follows that $X'_1=X'_{n+1}$.
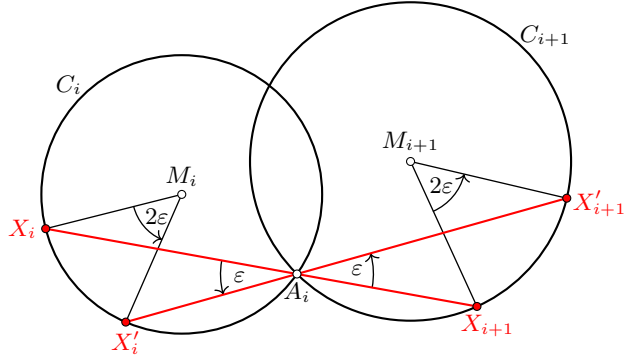
\begin{figure}[h!]
\begin{center}
\begin{tikzpicture}[line cap=round,line join=round,x=22,y=22]
\draw [shift={(3.77719617,-1.42461420)},line width=0.5pt,->] (169.6968672:1.3) arc (169.696867:195.8161425:1.3) ;
\draw [shift={(3.77719617,-1.42461420)},line width=0.5pt,->] (-10.3031327:1.3) arc (-10.3031327:15.816142569:1.3);
\draw [shift={(1.8178675163474398,-0.05970489092139551)},line width=0.5pt,->] (-165.7443896227277:0.8435598240297386) arc (-165.7443896227277:-113.50583898881722:0.8435598240297386);
\draw [shift={(5.70928032898988,0.5048937263753355)},line width=0.5pt,->] (-65.56804113355673:0.9372886933663762) arc (-65.56804113355673:-13.329490499646207:0.9372886933663762);
\draw [line width=0.8pt] (1.8178675163474398,-0.05970489092139551) circle (2.387874832082064);
\draw [line width=0.8pt] (5.70928032898988,0.5048937263753355) circle (2.730558558734794);
\draw [line width=0.5pt] (1.8178675163474398,-0.05970489092139551)-- (-0.4964770128468985,-0.6477147665860672);
\draw [line width=0.5pt] (1.8178675163474398,-0.05970489092139551)-- (0.865481491177273,-2.2494325161642053);
\draw [line width=0.5pt] (5.70928032898988,0.5048937263753355)-- (6.838673026079952,-1.9811517636427864);
\draw [line width=0.5pt] (5.70928032898988,0.5048937263753355)-- (8.366278557130874,-0.12463820953853944);
\draw [line width=0.8pt,red] (-0.4964770128468985,-0.6477147665860672)-- (6.838673026079952,-1.9811517636427864);
\draw [line width=0.8pt,red] (0.865481491177273,-2.2494325161642053)-- (8.366278557130874,-0.12463820953853944);
\begin{small}
\draw [fill=white] (1.8178675163474398,-0.05970489092139551) circle (1.5pt) node[above] {$M_i$};
\draw[color=black] (-0.1,1.8148724958113625) node {$C_i$};
\draw [fill=white] (5.70928032898988,0.5048937263753355) circle (1.5pt) node[above] {$M_{i+1}$};
\draw[color=black] (8,2.6584323198411033) node {$C_{i+1}$};
\draw [fill=red] (-0.4964770128468985,-0.6477147665860672) circle (1.5pt) node[left,red] {$X_i$};
\draw [fill=white] (3.7771961720268803,-1.4246142039251357) circle (1.5pt) node[below] {$A_i$};
\draw [fill=red] (0.865481491177273,-2.2494325161642053) circle (1.5pt) node[below,red] {$X_i'$};
\draw [fill=red] (6.838673026079952,-1.9811517636427864) circle (1.5pt) node[red,below,xshift=4pt] {$X_{i+1}$};
\draw [fill=red] (8.366278557130874,-0.12463820953853944) circle (1.5pt) node[red,right,yshift=-2pt,xshift=-1pt] {$X_{i+1}'$};
\draw[color=black] (2.8,-1.47) node {$\epsilon$};
\draw[color=black] (4.8,-1.35) node {$\epsilon$};
\draw[color=black] (1.5,-0.4) node[xshift=-2.3pt,yshift=-.6pt]  {2$\epsilon$};
\draw[color=black] (6.3,.1) node[xshift=-1.4pt] {2$\epsilon$};
\end{small}
\end{tikzpicture}
\caption{Proof of Theorem~\ref{thm-1}.}\label{fig-proof1}
\end{center}
\end{figure}
\end{proof}
The proof of Theorem~\ref{thm-1} was straightforward. The next result is more surprising.
\begin{theorem}\label{thm-2}
Let $C_1\mathbin{\intersect} C_2 \mathbin{\intersect} C_3\mathbin{\intersect} \cdots \mathbin{\intersect}C_n\mathbin{\intersect}C_1$ be a closed
chain of circles. Let $A_i,B_i$ be the intersection points of $C_i$ and $C_{i+1}$, where $A_i=B_i$ if $C_i$ touches $C_{i+1}$. 
Let $\varphi_A:=\varphi_{A_n}\circ\cdots\circ\varphi_{A_2}\circ\varphi_{A_1}$ and
$\varphi_B:=\varphi_{B_n}\circ\cdots\circ\varphi_{B_2}\circ\varphi_{B_1}$. Then the following holds: If
$\varphi_A(X)=X$ holds for one point $X\in C_1$, then $\varphi_B$ is the identity map on $C_1$.
\end{theorem}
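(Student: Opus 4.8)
The plan is to put angular coordinates on every circle, show that in these coordinates each elementary map $\varphi_{A_i}$ is a rigid \emph{rotation}, and thereby collapse the closing condition to a single congruence of angles modulo $2\pi$. The passage from the $A$-chain to the $B$-chain will then be governed by the orientation-reversing reflection that swaps the two intersection points of neighbouring circles.

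\textbf{Step 1: each $\varphi_{A_i}$ is a rotation.} On $C_i$ I would parametrize a point $X$ by the central-angle coordinate $\alpha_i(X):=\arg(X-M_i)$. The same inscribed-angle bookkeeping used in the proof of Theorem~\ref{thm-1} shows that if $Y=\varphi_{A_i}(X)$ and $\psi$ is the direction of the common line $XA_i=YA_i$, then $\alpha_i(X)=2\psi-\arg(M_i-A_i)$ and $\alpha_{i+1}(Y)=2\psi-\arg(M_{i+1}-A_i)$, with the \emph{same} coefficient $+2\psi$ on each circle through the common point $A_i$. Subtracting eliminates $\psi$ and gives
\[
\alpha_{i+1}(\varphi_{A_i}(X))=\alpha_i(X)+\delta_i^A,\qquad \delta_i^A:=\arg(M_i-A_i)-\arg(M_{i+1}-A_i)=\sphericalangle M_{i+1}A_iM_i .
\]
Thus $\varphi_{A_i}$ is orientation-preserving and acts as a rotation by the directed angle $\delta_i^A$, so $\varphi_A$ is the rotation of $C_1$ by $\Delta^A:=\sum_{i=1}^n\delta_i^A$. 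Since a nontrivial rotation of a circle has no fixed point, $\varphi_A(X)=X$ for a single $X$ forces $\Delta^A\equiv 0\pmod{2\pi}$, hence $\varphi_A=\mathrm{id}$ (this en passant reproves Theorem~\ref{thm-1}).

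\textbf{Step 2: the sign flip $\delta_i^B=-\delta_i^A$.} The two intersection points $A_i,B_i$ of $C_i$ and $C_{i+1}$ are mirror images in the line of centers $M_iM_{i+1}$, the perpendicular bisector of their common chord. The reflection $\sigma$ in this line fixes both $M_i$ and $M_{i+1}$ and sends $A_i$ to $B_i$; being an orientation-reversing isometry, it negates every directed angle. Applying it to the wedge at the pivot gives $\delta_i^B=\sphericalangle M_{i+1}B_iM_i=-\delta_i^A$, and summing over $i$ yields $\Delta^B=-\Delta^A$. Combining with Step~1: if $\varphi_A(X)=X$ for some $X$, then $\Delta^A\equiv0$, hence $\Delta^B=-\Delta^A\equiv0\pmod{2\pi}$, so $\varphi_B$ is the rotation by $\Delta^B$, i.e. the identity. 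The touching case $A_i=B_i$ is automatic: then the pivot lies on the axis $M_iM_{i+1}$, forcing $\delta_i^A\in\{0,\pi\}$, exactly consistent with $\delta_i^B=-\delta_i^A$.

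The hard part is the orientation bookkeeping, and it is the only place where care is required. I must verify that $\varphi_{A_i}$ is orientation-\emph{preserving} in the central-angle coordinate — this is what allows the per-edge angles to add cleanly to $\Delta^A$ rather than interfere — and that $\sigma$ \emph{reverses} the directed angle at the pivot; it is precisely this minus sign that makes the $B$-chain close exactly when the $A$-chain does, which is the surprising content of the statement. Once the signed formula $\delta_i^A=\sphericalangle M_{i+1}A_iM_i$ is pinned down with consistent conventions, the remaining steps are immediate.
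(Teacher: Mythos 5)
Your proof is correct and takes essentially the same route as the paper: your per-edge rotation angle $\delta_i^A=\sphericalangle M_{i+1}A_iM_i$ is exactly the paper's transfer angle $\mu_{A_i}$ (which the paper writes as $\pi-\tfrac12(\delta_{A_i}+\gamma_{A_i})$ and whose summation criterion $\sum_i\mu_{A_i}\equiv 0\pmod{2\pi}$ is Theorem~\ref{thm-closingcondition}), and your Step~2 is precisely the paper's mirror-symmetry observation $\mu_{B_i}=-\mu_{A_i}$. The only difference is presentational: you establish the rotation property analytically via complex-argument coordinates, where the paper uses a synthetic inscribed-angle argument.
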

Geometrically speaking, this means, that if the polygon  through the pivots $A_1, A_2,\ldots,A_n$ closes for one starting point $X$ on $C_1$,
then the polygon through the pivots $B_1, B_2,\ldots,B_n$ closes for each starting point on $C_1$ (see Figure~\ref{fig-thm2}).
\begin{figure}[h!]
\begin{center}
\begin{tikzpicture}[line cap=round,line join=round,x=44,y=44]
\draw [line width=0.8pt] (7.866527053974667,0.5199222482859711) circle (1.4576309783944001);
\draw [line width=0.8pt] (6.670800905123423,1.9230295974276448) circle (0.7804399469012693);
\draw [line width=0.8pt] (5.145492517869796,2.0388988601442075) circle (0.9223212140166329);
\draw [line width=0.8pt] (4.265830209481802,0.9540613325748167) circle (0.7478299812418407);
\draw [line width=0.8pt] (4.424352257628341,-0.6917748482134215) circle (0.9834255096672129);
\draw [line width=0.8pt] (6.270700754871843,-1.2715808475684112) circle (1.3596929026711528);

\draw [line width=.8pt,color=red] (5.9196990645862995,0.04202586792561247)-- (6.77798328814177,-0.44949022277130357);
\draw [line width=.8pt,color=red] (6.77798328814177,-0.44949022277130357)-- (7.337468452009789,1.5172699485617462);
\draw [line width=.8pt,color=red] (7.337468452009789,1.5172699485617462)-- (5.0432206312834635,2.9555323108971203);
\draw [line width=.8pt,color=red] (5.0432206312834635,2.9555323108971203)-- (3.614600171992503,0.5864350017498108);
\draw [line width=.8pt,color=red] (3.614600171992503,0.5864350017498108)-- (5.403456059433763,-0.7838697902807743);
\draw [line width=.8pt,color=red] (5.403456059433763,-0.7838697902807743)-- (5.9196990645862995,0.04202586792561247);
\draw [line width=0.4pt,color=red] (7.337468452009789,1.5172699485617462)-- (6.015090593598238,2.346267707808551);
\draw [line width=0.4pt,color=red] (3.614600171992503,0.5864350017498108)-- (4.2870642955230185,1.7015897898486292);
\draw [line width=0.4pt,color=red] (6.77798328814177,-0.44949022277130357)-- (7.58130547427894,-0.9095310456872623);
\draw [line width=0.4pt,color=red] (6.77798328814177,-0.44949022277130357)-- (7.451218600423436,1.9171362467786506);
\draw [line width=0.8pt,color=red] (4.080470903496167,0.2295672963476746)-- (3.614600171992503,0.5864350017498108);
\draw [line width=.8pt,color=blue] (6.700820665130815,-2.561449698410666)-- (6.424109578695687,0.729969614812759);
\draw [line width=.8pt,color=blue] (6.424109578695687,0.729969614812759)-- (7.003983369841884,2.6287745626405727);
\draw [line width=.8pt,color=blue] (7.003983369841884,2.6287745626405727)-- (5.564724139554996,1.2173629393201004);
\draw [line width=.8pt,color=blue] (5.564724139554996,1.2173629393201004)-- (3.5197824983105828,0.9024619769552411);
\draw [line width=.8pt,color=blue] (3.5197824983105828,0.9024619769552411)-- (5.191040470379476,-0.07588937388178962);
\draw [line width=.8pt,color=blue] (5.191040470379476,-0.07588937388178962)-- (6.700820665130815,-2.561449698410666);
\draw [line width=0.8pt,color=blue] (6.424109578695687,0.729969614812759)-- (6.552856528789453,1.1515533276626977);
\draw [line width=0.8pt,color=blue] (5.564724139554996,1.2173629393201004)-- (4.992838951823879,1.1292982276693815);
\draw [line width=0.8pt,color=blue] (5.191040470379476,-0.07588937388178962)-- (5.324316060520116,-0.2953017891680517);
\draw [line width=0.4pt,color=red] (5.403456059433763,-0.7838697902807743)-- (4.936094580643864,-1.5315638264090725);
\begin{small}
\draw[color=black] (8.92,1.73) node {$C_5$};
\draw [fill=white] (7.451218600423436,1.9171362467786506) circle (1.5pt) node[right,yshift=-4.5pt,xshift=-2.5pt] {$A_5$};
\draw[color=black] (7.43,2.45) node {$C_6$};
\draw [fill=white] (6.015090593598238,2.346267707808551) circle (1.5pt) node[left] {$A_6$};
\draw[color=black] (4.4,2.819160978832141) node {$C_1$};
\draw [fill=white] (4.2870642955230185,1.7015897898486292) circle (1.5pt) node[left,xshift=1pt,yshift=5pt] {$A_1$};
\draw[color=black] (3.55,1.5) node {$C_2$};
\draw [fill=white] (4.080470903496167,0.2295672963476746) circle (1.5pt)  node[below] {$A_2$};
\draw[color=black] (3.3,-.6) node {$C_3$};
\draw [fill=white] (6.479425715338928,0.071995975859382) circle (1.5pt) node[right,xshift=-2pt,yshift=4pt] {$B_4$};
\draw [fill=white] (6.552856528789453,1.1515533276626977) circle (1.5pt) node[right,xshift=-3pt,yshift=-5.5pt] {$B_5$};
\draw [fill=white] (5.958681666123824,1.6036979513825151) circle (1.5pt) node[right] {$B_6$};
\draw [fill=white] (4.586061391366323,0.27826426910604585) circle (1.5pt) node[above] {$B_2$};
\draw [fill=white] (5.324316060520116,-0.2953017891680517) circle (1.5pt) node[left] {$B_{3}$};
\draw [fill=white] (4.992838951823879,1.1292982276693815) circle (1.5pt) node[left,xshift=4pt,yshift=-6pt] {$B_1$};
\draw[color=black] (5.3,-2.4) node {$C_4$};
\draw [fill=white] (7.58130547427894,-0.9095310456872623) circle (1.5pt) node[right,xshift=-2pt,yshift=5pt] {$A_4$};
\draw [fill=white] (4.936094580643864,-1.5315638264090725) circle (1.5pt)  node[left,yshift=5pt,xshift=2pt] {$A_3$};
\draw [fill=red] (5.9196990645862995,0.04202586792561247) circle (1.5pt) node[above,red] {$X_4$};
\draw [fill=red] (6.77798328814177,-0.44949022277130357) circle (1.5pt) node[below,red] {$X_5$};
\draw [fill=red] (7.337468452009789,1.5172699485617462) circle (1.5pt) node[right,red,xshift=-2pt,yshift=-3pt] {$X_6$};
\draw [fill=red] (5.0432206312834635,2.9555323108971203) circle (1.5pt) node[above,red] {$X_1$};
\draw [fill=red] (3.614600171992503,0.5864350017498108) circle (1.5pt) node[left,red] {$X_2$};
\draw [fill=red] (5.403456059433763,-0.7838697902807743) circle (1.5pt) node[left,red] {$X_3$};
\draw [fill=blue] (6.700820665130815,-2.561449698410666) circle (1.5pt) node[blue,below,xshift=1pt] {$X'_4$};
\draw [fill=blue] (6.424109578695687,0.729969614812759) circle (1.5pt) node[left,blue,xshift=1pt] {$X'_5$};
\draw [fill=blue] (7.003983369841884,2.6287745626405727) circle (1.5pt) node[above,blue] {$X'_6$};
\draw [fill=blue] (5.564724139554996,1.2173629393201004) circle (1.5pt) node[below,blue] {$X'_1$};
\draw [fill=blue] (3.5197824983105828,0.9024619769552411) circle (1.5pt) node[left,blue] {$X'_2$};
\draw [fill=blue] (5.191040470379476,-0.07588937388178962) circle (1.5pt) node[above,blue,xshift=6pt,yshift=-2pt] {$X'_3$};
\end{small}
\end{tikzpicture}
\caption{Theorem~\ref{thm-2} for six circles. If the red polygon through the pivots $A_i$ 
with a starting point $X_1$ on $C_1$ closes, then the blue polygon through the pivots $B_i$  closes for every starting point $X_1'$ on $C_1$.}\label{fig-thm2}
\end{center}
\end{figure}
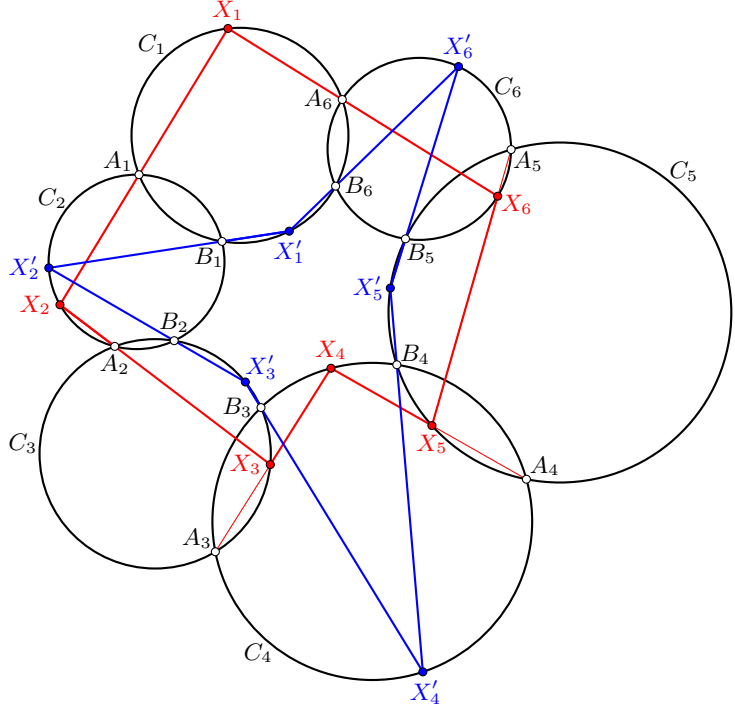

Before we get to the proof, let us introduce a useful concept
which will allow us to formulate an explicit condition on the circle chain for the polygons constructed in this way to close.
\begin{definition}\label{def-mu}
Let $C_1\mathbin{\intersect} C_2$ be two circles with centers $M_1$ and $M_2$, and $A$ a common point of $C_1$ and $C_2$.
Let $X$ be a point on $C_1$ and $\varphi_A(X)$ on $C_2$. Let $X'$ be the point on $C_2$ such that $M_1X$ and $M_2X'$ are
parallel in the sense that the vector $\overrightarrow{M_2X'}$ equals $\lambda \overrightarrow{M_1X}$ for a $\lambda>0$ (see Figure~\ref{fig-transfer}).
Then $\mu_A:=\sphericalangle X'M_2\varphi_A(X)$ is called the {\em transfer angle\/} of the map $\varphi_A$.
\end{definition}
Notice that the transfer angle is well defined as it does not depend on the choice of the point $X$. It turns out that
the transfer angle can be computed in the following way.
\begin{lemma}\label{lem-transfer}
Let $C_1\mathbin{\intersect} C_2$ be two circles with centers $M_1$ and $M_2$, and $A,B$ the intersection points of $C_1$ and $C_2$.
Let $\delta_A:=\sphericalangle AM_1B, \gamma_A:=\sphericalangle BM_2A$. Then the transfer angle $\mu_A$ of the map $\varphi_A$
is given by
$$
\mu_A=\pi-\frac12(\delta_A+\gamma_A).
$$
\end{lemma}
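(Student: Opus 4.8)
The plan is to exploit the fact, noted right after Definition~\ref{def-mu}, that $\mu_A$ does not depend on the chosen point $X\in C_1$; this lets me evaluate the transfer angle at the most convenient position, namely $X=B$, the second common point of $C_1$ and $C_2$.

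First I would record that $\varphi_A(B)=B$. Indeed, the line through the pivot $A$ and $X=B$ is the line $AB$, which meets $C_2$ in exactly the two points $A$ and $B$; since $\varphi_A(B)\in C_2$ must be the intersection point distinct from $A$, it is $B$ itself. Thus for this choice $\varphi_A(X)=B$, and in the notation of Definition~\ref{def-mu} the auxiliary point $X'$ is the point of $C_2$ with $\overrightarrow{M_2X'}=\lambda\overrightarrow{M_1B}$, $\lambda>0$. Hence
$$\mu_A=\sphericalangle X'M_2\varphi_A(X)=\sphericalangle X'M_2B,$$
the angle at $M_2$ between the ray $M_2X'$ (which points in the direction of $\overrightarrow{M_1B}$) and the ray $M_2B$.

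Next I would translate this into an angle of the triangle $M_1M_2B$. Since $\overrightarrow{M_2X'}$ is parallel to $\overrightarrow{M_1B}$, the angle between the rays $M_2X'$ and $M_2B$ equals the angle between the rays $BM_1$ and $BM_2$ (reversing both legs of an angle leaves it unchanged, and a transversal meets parallel lines at equal angles). Therefore $\mu_A=\sphericalangle M_1BM_2$. It remains to compute this base angle. The quadrilateral $M_1AM_2B$ is a kite: $|M_1A|=|M_1B|$ is the radius of $C_1$ and $|M_2A|=|M_2B|$ is the radius of $C_2$, so it is symmetric about the line $M_1M_2$, which is the perpendicular bisector of $AB$. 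Consequently $M_1M_2$ bisects the apex angles, i.e.\ $\sphericalangle BM_1M_2=\tfrac12\delta_A$ and $\sphericalangle BM_2M_1=\tfrac12\gamma_A$. Reading off the angle sum of the triangle $M_1M_2B$ then gives $\mu_A=\sphericalangle M_1BM_2=\pi-\tfrac12(\delta_A+\gamma_A)$, as claimed.

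The computation itself is short; the parts that need care are orientation bookkeeping and degeneracies. I would double-check that the ray condition $\lambda>0$ really produces the angle $\sphericalangle M_1BM_2$ and not its supplement (a quick coordinate check with $C_1,C_2$ placed symmetrically about the $x$-axis confirms it). I would also treat the tangent case $A=B$ separately or by continuity, where $\delta_A=\gamma_A=0$ and the formula reads $\mu_A=\pi$. Finally, although I rely on the stated independence of $\mu_A$ from $X$, this independence can itself be justified by the inscribed-angle theorem, which shows that $\varphi_A$ rotates the radial direction $M_1X$ into the radial direction $M_2\varphi_A(X)$ by the constant angle $\mu_A$; that observation is the only genuinely non-formal ingredient, and I expect pinning down the orientation (equivalently, verifying this rotation claim) to be the main obstacle.
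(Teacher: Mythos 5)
Your proof is correct and takes a genuinely different route from the paper's. The paper chooses the starting point $X$ so that the auxiliary point $X'$ of Definition~\ref{def-mu} coincides with $A$, and then sums four isosceles-triangle base angles at the pivot $A$ along the line $X\varphi_A(X)$, namely $\frac12(\pi-\mu_A)+\frac12(\pi-\gamma_A)+\frac12(\pi-\delta_A)+\frac12(\pi-\mu_A)=\pi$, using $M_1X\parallel M_2A$. You instead evaluate at $X=B$, observe that $B$ is a fixed point of $\varphi_A$, and reduce $\mu_A$ to the angle $\sphericalangle M_1BM_2$ of the triangle $M_1BM_2$, which kite symmetry of $M_1AM_2B$ plus the angle sum then computes; both arguments lean equally on the stated-but-unproved independence of $\mu_A$ from $X$, so that reliance is not a gap. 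What your route buys is a cleaner intermediate statement: $\mu_A=\sphericalangle M_1BM_2=\sphericalangle M_1AM_2$ is the angle under which the segment of centers is seen from a common point, i.e.\ the angle between the radii $M_1A$ and $M_2A$, which after a quarter turn of both legs is precisely the tangent-angle interpretation of Lemma~\ref{lem-2} --- your proof essentially delivers that lemma for free. One caveat, which applies to the paper's own proof in exactly the same way: the bisection step $\sphericalangle BM_2M_1=\frac12\gamma_A$ tacitly assumes that the ray from $M_2$ towards $M_1$ lies inside the angle $\sphericalangle BM_2A$, i.e.\ that $M_1$ and $M_2$ lie on opposite sides of the chord $AB$. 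If they lie on the same side, one gets $\sphericalangle BM_2M_1=\pi-\frac12\gamma_A$ with unsigned angles, and the stated formula survives only if $\delta_A,\gamma_A,\mu_A$ are read as directed (possibly reflex) angles. Since the paper explicitly relegates the same-side and tangent ($A=B$) cases to the reader, and you flag both the orientation bookkeeping and the tangent case, your argument is at the same level of rigor as the published one.
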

\begin{proof}
We choose the point $X$ on $C_1$ in such a way that the point $X'$ in Definition~\ref{def-mu} agrees with $A$ (see Figure~\ref{fig-transfer}). 
Then we have the following for the green angles in Figure~\ref{fig-transferformula}:
\begin{align*}
\sphericalangle \varphi_A(X) A M_2 &= \frac12(\pi-\mu_A)\\
\sphericalangle M_2AB &= \frac12(\pi-\gamma_A)\\
\sphericalangle BAM_1 &= \frac12(\pi-\delta_A)\\
\sphericalangle M_1AX &= \frac12(\pi-\mu_A)
\end{align*}
In the last line we have used that $M_1X$ and $M_2A$ are parallel (see Figure~\ref{fig-transferformula}).
Adding these four angles yields
$$
\frac12(\pi-\mu_A)+  \frac12(\pi-\gamma_A)+\frac12(\pi-\delta_A)+\frac12(\pi-\mu_A)=\pi,
$$
from which the formula for $\mu_A$ follows immediately.
\end{proof}
The reader is invited to also consider the situation when $M_1$ and $M_2$ lie on the same side of the line $AB$, as well as the case of touching circles with $A=B$.
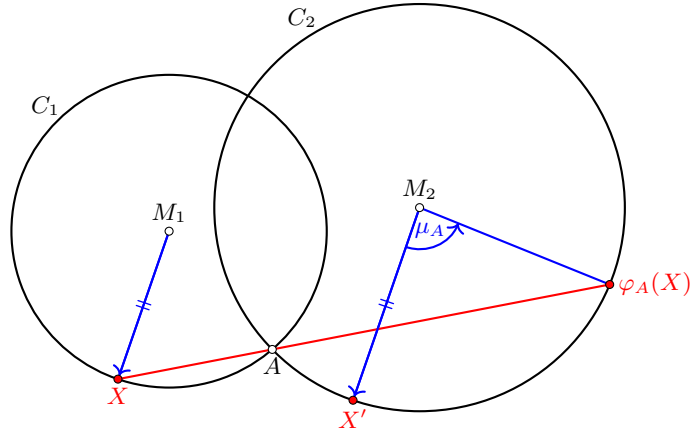
\begin{figure}[h!]
\begin{center}
\begin{tikzpicture}[line cap=round,line join=round,x=40,y=40]
\draw [shift={(7.599648342850498,1.8913361881843633)},line width=0.8pt,color=blue,->] (-108.91526549794216:0.39200351709025616) arc (-108.91526549794216:-22.182185369702875:0.39200351709025616);
\draw [line width=0.8pt] (5.256314055505859,1.6690068835868357) circle (1.4749238211549087);
\draw [line width=0.8pt] (7.599648342850498,1.8913361881843633) circle (1.9204375473422888);
\draw [line width=0.8pt,color=blue] (5.256314055505859,1.6690068835868357)-- (4.778188774370005,0.27373038993084453);
\draw [line width=0.8pt,color=blue] (7.599648342850498,1.8913361881843633)-- (9.377950504474585,1.1662694359434558);
\draw [line width=0.8pt,color=blue] (7.599648342850498,1.8913361881843633)-- (6.977101110880309,0.0746041448691363);
\draw [line width=0.8pt,color=red] (4.778188774370005,0.27373038993084453)-- (9.377950504474585,1.1662694359434558);
\draw [->,line width=0.8pt,color=blue,shorten >=1.5pt,] (5.256314055505859,1.6690068835868357) -- (4.778188774370005,0.27373038993084453);
\draw [->,line width=0.8pt,color=blue,shorten >=1.5pt,] (7.599648342850498,1.8913361881843633) -- (6.977101110880309,0.07460414486913636);

\draw [line width=0.5pt,color=blue] (5.084639401404729,0.9741167330410532) -- (4.965711960403113,1.0148701008661436);
\draw [line width=0.5pt,color=blue] (5.06879086947275,0.9278671726515352) -- (4.949863428471134,0.9686205404766257);
\draw [line width=0.5pt,color=blue] (7.355762713332201,0.9857182628089628) -- (7.2368352723305875,1.0264716306340549);
\draw [line width=0.5pt,color=blue] (7.3399141814002204,0.9394687024194449) -- (7.220986740398606,0.980222070244537);

\begin{small}
\draw [fill=white] (5.256314055505859,1.6690068835868357) circle (1.5pt) node[above] {$M_1$};
\draw[color=black] (4.1,2.849773096093096) node {$C_1$};
\draw [fill=white] (7.599648342850498,1.8913361881843633) circle (1.5pt) node[above] {$M_2$};
\draw [fill=white] (6.221573237623741,0.5538051128084196) circle (1.5pt) node[below] {$A$};
\draw[color=black] (6.5,3.68) node {$C_2$};
\draw [fill=red] (4.778188774370005,0.27373038993084453) circle (1.5pt) node[below,red] {$X$};
\draw [fill=red] (9.377950504474585,1.1662694359434558) circle (1.5pt)  node[right,red] {$\varphi_A(X)$};
\draw[color=blue] (7.7,1.7129628965313413) node[xshift=0pt,yshift=-1.5pt] {$\mu_A$};
\draw [fill=red] (6.977101110880309,0.0746041448691363) circle (1.5pt) node[below,red] {$X'$};
\end{small}
\end{tikzpicture}
\caption{The transfer angle $\mu_A$ of the map $\varphi_A$.}\label{fig-transfer}
\end{center}
\end{figure}

\begin{figure}[h!]
\begin{center}
\begin{tikzpicture}[line cap=round,line join=round,x=33,y=33]
\draw [shift={(6.070437661933166,0.38856168303488303)},line width=0.8pt,color=darkgreen,fill=darkgreen!12,->](0,0) -- (-7.602420116216897:0.7822783777527215) arc (-7.602420116216897:33.91744270025134:0.7822783777527215) ;
\draw [shift={(6.070437661933166,0.38856168303488303)},line width=0.8pt,color=darkgreen,fill=darkgreen!12,->] (0,0) -- (33.91744270025134:0.6705243237880469) arc (33.91744270025134:92.76303494516048:0.6705243237880469);
\draw [shift={(6.070437661933166,0.38856168303488303)},line width=0.8pt,color=darkgreen,fill=darkgreen!12,->] (0,0) --(92.76303494516048:.8) arc (92.76303494516048:130.87771706731488:.8);
\draw [shift={(6.070437661933166,0.38856168303488303)},line width=0.8pt,color=darkgreen,fill=darkgreen!12,->] (0,0) --(130.87771706731485:0.7) arc (130.87771706731485:172.3975798837831:0.7); 
\draw [shift={(4.923669828248287,1.7134675140170983)},line width=0.8pt,color=blue,->]  (-49.12228293268515:0.55) arc (-49.12228293268515:54.64835282300612:0.55);
\draw [shift={(8.28183673069715,1.8755376006445112)},line width=0.8pt,color=blue,->](151.6086271900696:0.6146472968057097) arc (151.6086271900696:213.9174427:0.614647296);

\draw [shift={(8.28183673069715,1.8755376006445112)},line width=0.8pt,color=blue,->]  (-146.082557:0.5) arc (-146.08255729974866:-49.12228293268504:0.5);
\draw [line width=0.8pt] (4.923669828248287,1.7134675140170983) circle (1.7522705057567411);
\draw [line width=0.8pt] (8.28183673069715,1.8755376006445112) circle (2.6648420630276024);

\draw [line width=0.8pt,color=blue] (8.28183673069715,1.8755376006445112)-- (6.070437661933166,0.38856168303488303);
\draw [line width=0.5pt,color=blue] (7.263828087577867,1.071030105242383) -- (7.152710730163091,1.2362814840983152);
\draw [line width=0.5pt,color=blue] (7.199563662467227,1.02781779958108) -- (7.088446305052451,1.1930691784370122);

\draw [line width=0.8pt,color=blue] (4.923669828248287,1.7134675140170983)-- (3.469561377164102,0.7357044865761249);
\draw [line width=0.5pt,color=blue] (4.284306493968903,1.1635664636992982) -- (4.173189136554127,1.3288178425552304);
\draw [line width=0.5pt,color=blue] (4.220042068858263,1.120354158037995) -- (4.108924711443487,1.2856055368939274);

\draw [line width=0.8pt,color=red] (3.469561377164102,0.7357044865761249)-- (10.025834104213839,-0.13937092880493185);
\draw [line width=0.8pt,color=blue] (8.28183673069715,1.8755376006445112)-- (10.025834104213839,-0.13937092880493185);
\draw [line width=0.8pt,color=blue] (4.923669828248287,1.7134675140170983)-- (6.070437661933166,0.38856168303488303);
\draw [line width=0.8pt,color=blue] (6.070437661933166,0.38856168303488303)-- (5.937521394009971,3.1426480232417267);
\draw [line width=0.8pt,color=blue] (5.937521394009971,3.1426480232417267)-- (4.923669828248287,1.7134675140170983);
\draw [line width=0.8pt,color=blue] (5.937521394009971,3.1426480232417267)-- (8.28183673069715,1.8755376006445112);
\begin{small}
\draw [fill=white] (4.923669828248287,1.7134675140170983) circle (1.5pt)  node[above,xshift=-5pt] {$M_1$};
\draw[color=black] (3.65,3.216586248220619) node {$C_1$};
\draw [fill=white] (8.28183673069715,1.8755376006445112) circle (1.5pt) node[above,xshift=3pt]{$M_2$};
\draw [fill=white] (6.070437661933166,0.38856168303488303) circle (1.5pt)  node[below,yshift=-1pt,xshift=-1pt] {$A$};
\draw[color=black] (6.85,4.378828409453245) node {$C_2$};
\draw [fill=red] (3.469561377164102,0.7357044865761249) circle (1.5pt) node[red,below,xshift=-3pt] {$X$};
\draw [fill=red] (10.025834104213839,-0.13937092880493185) circle (1.5pt)  node[red,below,xshift=17pt,yshift=7pt] {$\varphi_A(X)$};
\draw [fill=white] (5.937521394009971,3.1426480232417267) circle (1.5pt) node[above,yshift=1.5pt,xshift=-1pt] {$B$};
\draw[color=blue] (5.28,1.75) node {$\delta_A$};
\draw[color=blue] (7.93,1.85) node {$\gamma_A$};
\draw[color=blue] (8.25,1.57) node {$\mu_A$};
\end{small}
\end{tikzpicture}
\caption{Proof of the formula for the transfer angle $\mu_A$ of the map $\varphi_A$.}\label{fig-transferformula}
\end{center}
\end{figure}
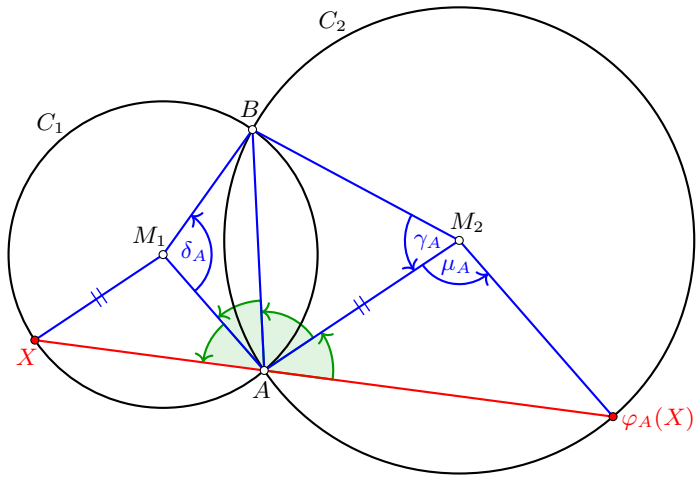
The transfer angle can also be interpreted geometrically in another way.
\begin{lemma}\label{lem-2}
Let $C_1\mathbin{\intersect} C_2$ be two circles with centers $M_1$ and $M_2$, and $A,B$ the intersection points of $C_1$ and $C_2$.
Let $t_1$ and $t_2$ be the tangents to $C_1$ and $C_2$ in $A$.
Then the transfer angle $\mu_A$ of the map $\varphi_A$
is given by $\mu_A=\sphericalangle t_2t_1$ (see Figure~\ref{fig-transferformula1}). Here, $t_1$ is oriented from $A$ towards the inside of $C_2$, 
and $t_2$ is oriented from $A$ towards the outside of $C_1$.
\end{lemma}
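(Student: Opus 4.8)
The plan is to reduce the statement to Lemma~\ref{lem-transfer}, which already supplies $\mu_A=\pi-\frac12(\delta_A+\gamma_A)$. It therefore suffices to show that the angle $\sphericalangle t_2t_1$ between the two tangents at $A$ equals this same quantity, and then to check that the prescribed orientations of $t_1$ and $t_2$ pick out $\mu_A$ rather than a supplementary or reflex value.

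First I would exploit that a tangent is perpendicular to the radius at the point of tangency: $t_1\perp M_1A$ and $t_2\perp M_2A$. Since rotating two lines by a common right angle preserves the angle between them, the (unoriented) angle between the tangent lines $t_1$ and $t_2$ equals the angle $\sphericalangle M_1AM_2$ between the two radii. This converts the tangent problem into a purely triangular/quadrilateral one.

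Next I would compute $\sphericalangle M_1AM_2$ from the quadrilateral $AM_1BM_2$. Because $M_1A=M_1B$ and $M_2A=M_2B$ (radii), this quadrilateral is a kite symmetric about the line $M_1M_2$, so $\sphericalangle M_1AM_2=\sphericalangle M_1BM_2$. Its interior angles sum to $2\pi$, and the angles at $M_1$ and $M_2$ are $\delta_A=\sphericalangle AM_1B$ and $\gamma_A=\sphericalangle BM_2A$, whence
$$
\sphericalangle M_1AM_2=\tfrac12\bigl(2\pi-\delta_A-\gamma_A\bigr)=\pi-\tfrac12(\delta_A+\gamma_A)=\mu_A,
$$
the last equality by Lemma~\ref{lem-transfer}. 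Combined with the previous step this yields $\sphericalangle t_2t_1=\mu_A$ as unoriented angles.

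The remaining and most delicate point is the orientation bookkeeping, which I expect to be the main obstacle. I must verify that, with $t_1$ directed from $A$ into the interior of $C_2$ and $t_2$ directed from $A$ out of $C_1$, the oriented angle $\sphericalangle t_2t_1$ is exactly $\mu_A$ and not its supplement. I would settle this by measuring each tangent ray against the common chord $AB$ via the tangent--chord (inscribed-angle) theorem, which gives that the relevant ray of $t_1$ makes angle $\frac12\delta_A$ with $AB$ and the relevant ray of $t_2$ makes angle $\frac12\gamma_A$ with $AB$ on the two prescribed sides; the chosen orientations then arrange these two angles around $A$ together with $\mu_A$ so that they close up consistently. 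Finally I would confirm that the same computation, with the convex kite replaced by its non-convex analogue, covers the configuration in which $M_1$ and $M_2$ lie on the same side of $AB$, and that everything degenerates correctly to $\delta_A=\gamma_A=0$ for touching circles $A=B$, where $t_1$ and $t_2$ become the common tangent and $\mu_A=\pi$.
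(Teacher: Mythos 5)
Your proposal is correct, but it reaches the result by a partially different route than the paper, and the comparison is instructive. The paper's proof consists of exactly one ingredient: the tangent--chord (inscribed angle) theorem, applied once in each circle, gives the angle $\frac12\delta_A$ between $t_1$ and the chord $AB$ and the angle $\frac12\gamma_A$ between the chord and $t_2$; with the prescribed orientations these two angles and $\sphericalangle t_2t_1$ fill a straight angle at $A$, so $\sphericalangle t_2t_1=\pi-\frac12(\delta_A+\gamma_A)=\mu_A$ by Lemma~\ref{lem-transfer}, orientations included. Your main computation is instead the perpendicularity argument: since $t_1\perp M_1A$ and $t_2\perp M_2A$, the angle between the tangent \emph{lines} equals $\sphericalangle M_1AM_2$ (as angles of unoriented lines, i.e.\ modulo $\pi$), which the angle sum in the kite $AM_1BM_2$ evaluates to $\pi-\frac12(\delta_A+\gamma_A)$. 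This is a genuinely different and attractive identification --- it exhibits the transfer angle as the angle under which the two centers are seen from $A$ --- but it only determines $\sphericalangle t_2t_1$ up to a supplement, and the orientation bookkeeping you correctly flag as the delicate point is then settled by the tangent--chord argument, which \emph{is} the paper's entire proof. So your kite detour buys conceptual insight but is logically redundant: the tangent--chord step you defer to the end already yields the full statement in one stroke. Two further remarks: your kite argument, like the paper's proof of Lemma~\ref{lem-transfer}, explicitly covers only the configuration with $M_1,M_2$ on opposite sides of $AB$ (the paper likewise leaves the same-side and touching cases to the reader, so this is not a defect relative to the paper); and in the touching case $A=B$ the orientation prescription for $t_1,t_2$ degenerates (both tangents coincide), so the statement there is best read directly as $\mu_A=\pi$, consistent with $\delta_A=\gamma_A=0$, as you note.
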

\begin{proof}
Recall that the central angle over a chord of a circle is twice the inscribed angle over the same chord, and the inscribed angle
over the chord equals the supplementary inscribed angle on the opposite arc, which is also the angle between the chord
and the tangent at an endpoint of the chord. In particular, we find the angle $\frac12\sphericalangle\delta_A$ between $t_1$ and the chord $AB$ (see Figure~\ref{fig-transferformula1}),
and the angle $\frac12\sphericalangle\gamma_A$ between the chord $BA$ and $t_2$. Hence, according to Lemma~\ref{lem-transfer}, the transfer angle $\mu_A$
is the angle between $t_2$ and $t_1$.
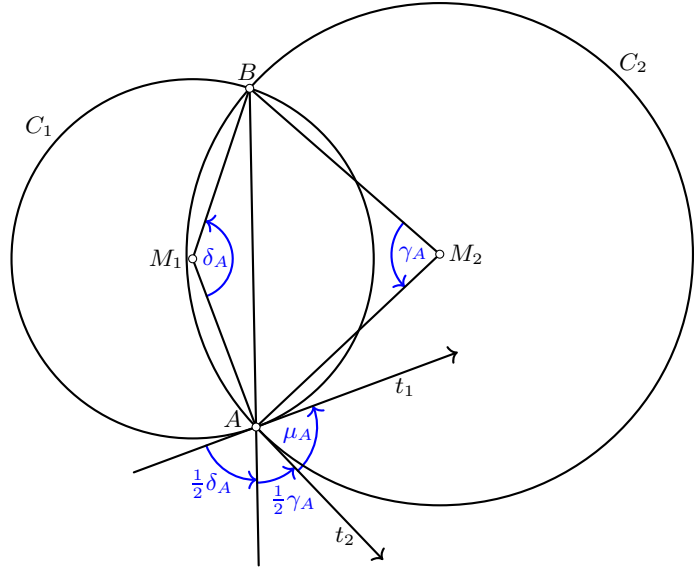
\begin{figure}[h!]
\begin{center}
\definecolor{blue}{rgb}{0.,0.,1.}
\begin{tikzpicture}[line cap=round,line join=round,x=30,y=30]
\draw [shift={(6.548564497208789,-2.5641114648411487)},line width=0.8pt,color=blue,->]  (-69.49198940446077:0.5) arc (-69.49198940446077:71.64369064604331:0.5);
\draw [shift={(9.63107451630296,-2.506223952745484)},line width=0.8pt,color=blue,->] (138.65660304470768:0.6) arc (138.65660304470768:223.49509819687484:0.6);
\draw [shift={(7.34,-4.68)},line width=0.8pt,color=blue,->] (-159.49198940446075:0.66323329) arc (-159.49198940446075:-88.92414937920873:0.66323329);
\draw [shift={(7.34,-4.68)},line width=0.8pt,color=blue,->] (-88.92414937920873:0.7) arc (-88.92414937920873:-46.50490180312506:0.7);
\draw [shift={(7.34,-4.68)},line width=0.8pt,color=blue,->](-46.50490180312504:0.7579809084438648) arc (-46.50490180312504:20.5080105955392:0.757980908);
\draw [line width=0.8pt] (6.548564497208789,-2.5641114648411487) circle (2.2590605233802497);

\draw [line width=0.8pt] (9.63107451630296,-2.506223952745484) circle (3.158215436424549);

\draw [line width=0.8pt] (7.26,-0.42)-- (7.37268568910433,-6.4205129448055525);
\draw [line width=0.8pt] (6.548564497208789,-2.5641114648411487)-- (7.26,-0.42);
\draw [line width=0.8pt] (6.548564497208789,-2.5641114648411487)-- (7.34,-4.68);
\draw [line width=0.8pt] (7.34,-4.68)-- (9.63107451630296,-2.506223952745484);
\draw [line width=0.8pt] (9.63107451630296,-2.506223952745484)-- (7.26,-0.42);
\draw [line width=0.8pt] (7.34,-4.68)-- (5.815136605408486,-5.250366069541487);
\draw [line width=0.8pt,->] (7.34,-4.68)-- (8.925513446102288,-6.3510688556019925);
\draw [line width=.8pt,->,shorten >=15mm] (7.34,-4.68)-- (11.189421195228324,-3.2401469319025615);
\begin{small}
\draw [fill=white] (7.34,-4.68) circle (1.5pt)  node[left,xshift=-2pt,yshift=3pt] {$A$};
\draw [fill=white] (7.26,-0.42) circle (1.5pt) node[above,xshift=-1pt] {$B$};
\draw[color=black] (4.64,-0.9) node {$C_1$};
\draw[color=black] (12.05,-.1) node {$C_2$};
\draw [fill=white] (6.548564497208789,-2.5641114648411487) circle (1.5pt)  node[left] {$M_1$};
\draw [fill=white] (9.63107451630296,-2.506223952745484) circle (1.5pt) node[right] {$M_2$};
\draw[color=blue] (6.84,-2.55) node {$\delta_A$};
\draw[color=blue] (9.3,-2.5) node {$\gamma_A$};
\draw[color=blue] (6.78,-5.4) node {$\frac12\delta_A$};
\draw[color=blue] (7.8,-5.6) node {$\frac12\gamma_A$};
\draw[color=blue] (7.8672919437560775,-4.78) node {$\mu_A$};

\draw[color=black] (9.2,-4.18) node {$t_1$};

\draw[color=black] (8.45,-6.05) node {$t_2$};

\end{small}
\end{tikzpicture}
\caption{Lemma~\ref{lem-2}: The transfer angle $\mu_A$ of the map $\varphi_A$.}\label{fig-transferformula1}
\end{center}
\end{figure}
\end{proof}

Using the notion of the transfer angle we can now formulate the following closing condition.
\begin{theorem}\label{thm-closingcondition}
Let $C_1\mathbin{\intersect} C_2 \mathbin{\intersect} C_3\mathbin{\intersect} \cdots \mathbin{\intersect}C_n\mathbin{\intersect}C_1$ be a closed
chain of circles with centers $M_1,\ldots,M_n$. Let $A_i,B_i$ be the intersection points of $C_i$ and $C_{i+1}$, where $A_i=B_i$ if $C_i$ touches $C_{i+1}$. 
Let $\delta_{A_i}:=\sphericalangle A_iM_iB_i, \gamma_{A_i}:=\sphericalangle B_iM_{i+1}A_i$. Then the map
$\varphi_A:=\varphi_{A_n}\circ\cdots\circ\varphi_{A_2}\circ\varphi_{A_1}$ is the identity map on $C_1$ if and only if
the sum of all transfer angles is a multiple of $2\pi$, i.e.,
$$
n\pi -\frac12\sum_{i=1}^n(\delta_{A_i}+\gamma_{A_i}) = 2k\pi
$$
for an integer $k$.
\end{theorem}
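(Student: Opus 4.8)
The plan is to linearize every map $\varphi_{A_i}$ by introducing a single global angular coordinate. I would fix once and for all a reference direction in the plane, orient all circles counterclockwise, and for a point $X$ on $C_i$ let $\theta_i(X)\in\R/2\pi\mathds{Z}$ denote the angle that the radius vector $\overrightarrow{M_iX}$ makes with the reference direction. In these coordinates I claim each $\varphi_{A_i}\colon C_i\to C_{i+1}$ becomes a pure rotation,
$$\theta_{i+1}(\varphi_{A_i}(X))=\theta_i(X)+\mu_{A_i}\pmod{2\pi},$$
where $\mu_{A_i}$ is exactly the transfer angle of Definition~\ref{def-mu}. Granting this, the composition $\varphi_A=\varphi_{A_n}\circ\cdots\circ\varphi_{A_1}$ is the rotation $\theta_1\mapsto\theta_1+\sum_{i=1}^n\mu_{A_i}$ of $C_1$ (recall $C_{n+1}=C_1$ and $M_{n+1}=M_1$), and a rotation of a circle is the identity if and only if its angle is a multiple of $2\pi$. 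Inserting the value $\mu_{A_i}=\pi-\tfrac12(\delta_{A_i}+\gamma_{A_i})$ supplied by Lemma~\ref{lem-transfer} then turns $\sum_{i=1}^n\mu_{A_i}=2k\pi$ into precisely the asserted closing condition.

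The heart of the argument is the displayed identity, which I would derive from the inscribed‑angle (tangent–chord) theorem. For $X$ on $C_i$ and the fixed common point $A_i$ on $C_i$, the direction of the chord $A_iX$ equals $\tfrac12\bigl(\theta_i(X)+\theta_i(A_i)\bigr)+\tfrac\pi2$ modulo $\pi$. Since $X$, $A_i$, and $\varphi_{A_i}(X)$ are collinear, the chord $A_i\varphi_{A_i}(X)$ of $C_{i+1}$ lies on the same line, so its direction $\tfrac12\bigl(\theta_{i+1}(\varphi_{A_i}(X))+\theta_{i+1}(A_i)\bigr)+\tfrac\pi2$ agrees with the former modulo $\pi$. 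Equating and clearing the factor $\tfrac12$ (which upgrades the congruence from modulo $\pi$ to modulo $2\pi$) yields
$$\theta_{i+1}(\varphi_{A_i}(X))=\theta_i(X)+\bigl(\theta_i(A_i)-\theta_{i+1}(A_i)\bigr)\pmod{2\pi}.$$
Two facts must be read off. First, the coefficient of $\theta_i(X)$ is $+1$, so $\varphi_{A_i}$ is orientation preserving and its shift is genuinely constant, independent of $X$. Second, the constant $\theta_i(A_i)-\theta_{i+1}(A_i)$ is the transfer angle: feeding Definition~\ref{def-mu} through the same coordinate, the auxiliary point $X'$ on $C_{i+1}$ with $\overrightarrow{M_{i+1}X'}\parallel\overrightarrow{M_iX}$ satisfies $\theta_{i+1}(X')=\theta_i(X)$, whence the signed angle $\mu_{A_i}=\sphericalangle X'M_{i+1}\varphi_{A_i}(X)=\theta_{i+1}(\varphi_{A_i}(X))-\theta_i(X)$ equals exactly this constant.

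I expect the delicate point to be the book‑keeping of orientation and signs: one must commit to a single (counterclockwise) orientation for all circles and to signed angles throughout, so that the per‑step shifts add rather than partially cancel. This is exactly what the $+1$ coefficient above guarantees, and it is the reason the individual transfer angles combine into one clean sum. The only remaining case to check is that of touching circles $A_i=B_i$, where $\varphi_{A_i}$ is defined through the common tangent; here the chord‑direction formula passes to its tangent–chord limit, so the same identity and the same value of $\mu_{A_i}$ persist and no separate treatment is needed. Finally, since $\varphi_A$ acts as a rigid rotation of $C_1$, fixing one point already forces it to be the identity, in agreement with Theorem~\ref{thm-1}, and the quantitative criterion $\sum_{i=1}^n\mu_{A_i}=2k\pi$ is precisely the condition that this rotation angle vanish modulo $2\pi$.
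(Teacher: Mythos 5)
Your proposal is correct and follows essentially the same route as the paper: the paper's proof simply declares it ``clear'' that $\varphi_A$ is the identity if and only if the transfer angles sum to a multiple of $2\pi$ (since each $\varphi_{A_i}$ acts as a rotation by $\mu_{A_i}$, which is the content of Definition~\ref{def-mu} and the remark that the transfer angle is well defined), and then substitutes the formula of Lemma~\ref{lem-transfer}, exactly as you do. Your angular-coordinate linearization and chord-direction computation are a careful justification of the rotation property that the paper leaves implicit, not a different argument.
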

\begin{proof}
Clearly, the map $\varphi_A$ is the identity on $C_1$ if and only if the sum of 
all transfer angles is a multiple of $2\pi$. Using the formula from Lemma~\ref{lem-transfer} for the transfer angles $\mu_{A_i}$ we
get the closing condition stated in the theorem.
\end{proof}
Observe that the situation of two circles with centers $M_1$ and $M_2$, intersecting in the points $A$ and $B$ like in Figure~\ref{fig-transferformula} 
is mirror symmetric with respect to the line $M_1M_2$. Hence for the transfer angles of $\varphi_A$ and $\varphi_B$ we have
$$
\mu_B=-\mu_A.
$$
Now, if the sum of the transfer angles $\mu_{A_i}$ is a multiple of $2\pi$, then the same is true for the sum
of the transfer angles $\mu_{B_i}=-\mu_{A_i}$. This proves Theorem~\ref{thm-2}. Another application of the criterion in Theorem~\ref{thm-closingcondition}
is the following.
\begin{corollary}\label{cor-fivecircles}
Let $l_1, l_2, \ldots, l_n,l_{n+1}=l_1, l_{n+2}=l_2$ be a set of lines such that $l_{i-1},l_{i}, l_{i+1}$ form a triangle with circumcircle $C_i$
(see Figure~\ref{fig-fivecircles}). Let $A_i$ denote the intersection of $l_{i}$ and $l_{i+1}$. Then, $\varphi:=\varphi_{A_n}\circ\varphi_{A_{n-1}}\circ\ldots
\circ\varphi_{A_1}$ is the identity map on $C_1$, i.e., the resulting $n$-gon $X_1$, $X_{i+1}=\varphi_{A_i}(X_i)$ closes for any
initial point $X_1$ on $C_1$.
\end{corollary}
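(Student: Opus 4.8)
The plan is to deduce the corollary from the closing condition of Theorem~\ref{thm-closingcondition}: since the configuration is a closed circle chain with pivots $A_i$, it suffices to prove that the transfer angles of the maps $\varphi_{A_i}\colon C_i\to C_{i+1}$ sum to a multiple of $2\pi$.

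First I would fix the geometry and check the hypotheses. By assumption $C_i$ is the circumcircle of the triangle cut out by $l_{i-1},l_i,l_{i+1}$; its three vertices are $A_{i-1}=l_{i-1}\cap l_i$, $A_i=l_i\cap l_{i+1}$ and $P_i:=l_{i-1}\cap l_{i+1}$. In particular $C_i$ and $C_{i+1}$ share the vertex $A_i=l_i\cap l_{i+1}$, so this is indeed a closed chain in the sense of Theorem~\ref{thm-closingcondition}, and the pivots are exactly the $A_i$. Moreover, each of the lines $l_i$ and $l_{i+1}$ is a chord of $C_i$ (namely $A_{i-1}A_i$ and $A_iP_i$); this is the feature I would exploit via the tangent–chord angle.

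Next I would compute each transfer angle using Lemma~\ref{lem-2}, which identifies $\mu_{A_i}$ with the angle between the tangent to $C_{i+1}$ at $A_i$ and the tangent to $C_i$ at $A_i$. Writing $\theta_j$ for the direction of the line $l_j$, the tangent–chord (inscribed angle) theorem applied in $C_i$ to the chord $l_i=A_{i-1}A_i$ with third vertex $P_i$ gives $\sphericalangle(t_1,l_i)=\sphericalangle(l_{i+1},l_{i-1})$, so the tangent $t_1$ to $C_i$ at $A_i$ has direction $\theta_i-\theta_{i-1}+\theta_{i+1}$. Applied in $C_{i+1}$ to the chord $l_{i+1}=A_iA_{i+1}$ with third vertex $Q_i:=l_i\cap l_{i+2}$, it gives that the tangent $t_2$ to $C_{i+1}$ at $A_i$ has direction $\theta_i+\theta_{i+1}-\theta_{i+2}$. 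Subtracting the two directions, the transfer angle is $\mu_{A_i}=\theta_{i+2}-\theta_{i-1}$, i.e.\ the directed angle $\sphericalangle(l_{i-1},l_{i+2})$. Summing cyclically then telescopes: $\sum_{i=1}^n\mu_{A_i}=\sum_{i=1}^n(\theta_{i+2}-\theta_{i-1})=0$, since after re-indexing both $\sum_i\theta_{i+2}$ and $\sum_i\theta_{i-1}$ equal $\sum_j\theta_j$. Hence the sum of transfer angles vanishes and Theorem~\ref{thm-closingcondition} yields that $\varphi$ is the identity on $C_1$, which is the closing statement.

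The step needing care—and the main obstacle—is the orientation bookkeeping: the tangent–chord computation naturally lives modulo $\pi$ (lines have no canonical orientation), whereas Theorem~\ref{thm-closingcondition} demands a multiple of $2\pi$. I would settle this by using the oriented tangents prescribed in Lemma~\ref{lem-2} ($t_1$ directed into $C_{i+1}$ and $t_2$ out of $C_i$), which pin each $\mu_{A_i}$ down modulo $2\pi$ and make the telescoping exact. Alternatively, since $\sum_i\mu_{A_i}$ depends continuously on the non-degenerate configuration and always lies in $\pi\mathds{Z}$, it is locally constant; evaluating it on a symmetric model chain where the polygon visibly closes shows the constant is $0$ modulo $2\pi$, which then holds throughout.
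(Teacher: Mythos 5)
Your overall strategy coincides with the paper's: reduce Corollary~\ref{cor-fivecircles} to the closing criterion of Theorem~\ref{thm-closingcondition} and compute each transfer angle via Lemma~\ref{lem-2} and the tangent--chord angle. What differs is the bookkeeping: you encode the lines by directions $\theta_j$ and telescope, whereas the paper expresses $\mu_{A_i}$ through the exterior angles of the pivot polygon $A_1A_2\ldots A_n$, obtaining the exact identity $\mu_{A_i}=2\pi-(\omega_{i-1}+\omega_i+\omega_{i+1})$ and then invoking the fact that $\sum_i\omega_i$ is a multiple of $2\pi$ (the turning number of a closed polygon).

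The problem is that your version has a genuine gap, located exactly where you yourself say the main obstacle lies. The tangent--chord relation is intrinsically a statement modulo $\pi$, so your telescoping only yields $\sum_i\mu_{A_i}\equiv 0\pmod{\pi}$; but the entire content of the corollary is to distinguish $0$ from $\pi$ modulo $2\pi$. That this distinction cannot be glossed over is shown by Theorem~\ref{thm-touching}: for touching circles every transfer angle equals $\pi$ (by Lemma~\ref{lem-transfer} with $\delta_A=\gamma_A=0$), so the sum is always $\equiv 0\pmod{\pi}$, yet for odd $n$ it is $\equiv\pi\pmod{2\pi}$ and the polygon does \emph{not} close after one circuit. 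Neither of your two proposed repairs is actually carried out. For the first: orienting $t_1$ and $t_2$ as in Lemma~\ref{lem-2} does pin each $\mu_{A_i}$ down modulo $2\pi$, but those orientations (``into $C_{i+1}$'', ``out of $C_i$'') depend on the configuration, so each tangent--chord identity acquires a correction of $0$ or $\pi$; the assertion that ``the telescoping is exact'' is precisely the claim that these corrections cancel in total, and that is stated, not proved. For the second: local constancy of $\sum_i\mu_{A_i}$ only gives constancy on each connected component of the space of admissible line configurations; that this space is path-connected (so that one symmetric model decides the value everywhere) is a nontrivial topological claim you neither formulate nor establish --- the admissible configurations form the complement of codimension-one degeneracy walls (parallelism and concurrency of consecutive lines), and connectedness of such complements is exactly the kind of statement that can fail. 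The paper's computation avoids the issue because $\mu_{A_i}=\omega_i+\epsilon_i+\epsilon_{i+1}$ and $\epsilon_i=\pi-(\omega_{i-1}+\omega_i)$ are identities between actual angles, and the needed parity then comes from $\sum_i\omega_i$ being a multiple of $2\pi$, not merely of $\pi$. To finish along your own lines you would have to do the oriented-angle bookkeeping honestly (orient each $l_j$, prove an oriented tangent--chord lemma with explicit $\pi$-corrections, and show that the corrections cancel), or else prove the connectedness statement; as written, the proof is incomplete.
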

Note that the situation of Corollary~\ref{cor-fivecircles}  corresponds exactly to that in Morley's Five Circles theorem~\cite{morley,morley2,hhl} and of Miquel's Pentagon theorem~\cite{miquel, nhpentagon}.
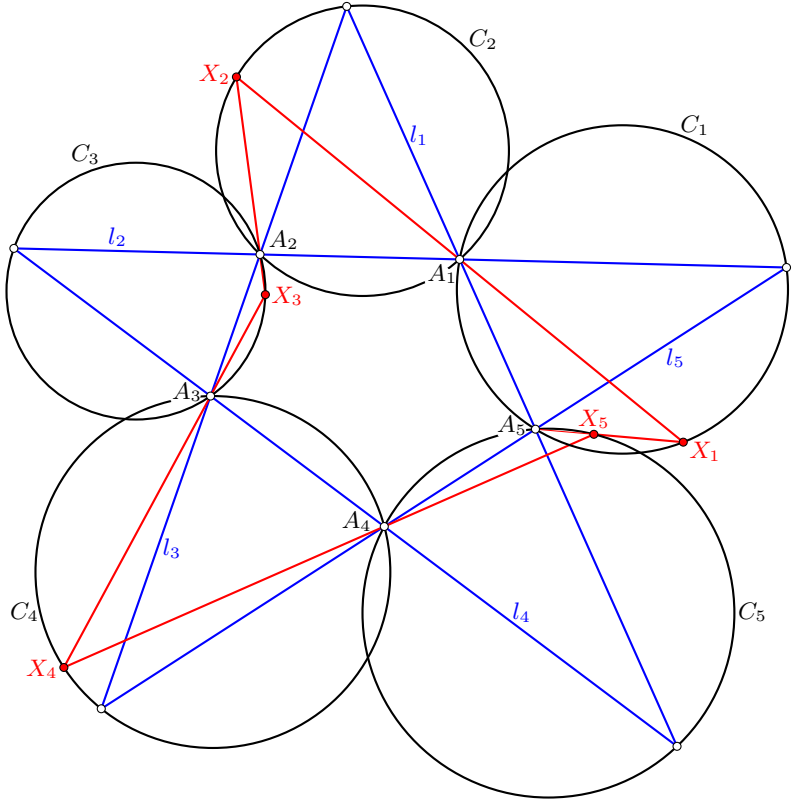
\begin{figure}
\begin{center}
\begin{tikzpicture}[line cap=round,line join=round,x=12,y=12]
\draw[line width=.8pt,color=blue] (-3.5076794741867094,-13.039190844372085) -- (17.85982895558124,0.8358146295030996) -- (-6.23276325001621,1.4359859655855958) -- (14.446504433121106,-14.219504822778118) -- (4.146182542718997,9.044535168162536) -- cycle;
\draw[line width=.8pt,color=red] (14.640520013605277,-4.656391920532995) -- (0.7070868254103599,6.826624237115578) -- (1.6093595709876263,-0.017861169381211097) -- (-4.674105517688726,-11.740757360603393) -- (11.853425465831393,-4.409200242649938) -- cycle;
\draw [line width=0.8pt] (4.638737813300063,4.505489570747567) circle (4.565692239955449);
\draw [line width=0.8pt] (-2.4260121738788225,0.09246684260238183) circle (4.03687966002143);
\draw [line width=0.8pt] (-0.026702204865289382,-8.73897660395168) circle (5.532544194405212);
\draw [line width=0.8pt] (10.436576294275646,-10.031560749280048) circle (5.798137566620178);
\draw [line width=0.8pt] (12.7436676228852,0.14508359734482768) circle (5.162578439206573);
\draw [line width=0.4pt,color=red] (11.853425465831393,-4.409200242649938)-- (10.03137124639526,-4.247599467370873);
\begin{small}
\draw [fill=white] (5.317694604689856,-7.308428455491241) circle (1.5pt) node[left,xshift=-1.5pt,yshift=2pt] {$A_4$};

\draw [fill=white,white] (9.2,-4.247599467370873) circle (4.5pt) ;
\draw [fill=white,white] (9.5,-4.247599467370873) circle (2.7pt) ;
\draw [fill=white] (10.03137124639526,-4.247599467370873) circle (1.5pt)  node[left,yshift=1.5pt] {$A_5$};
\draw[color=blue] (14.4,-2) node {$l_5$};

\draw [fill=white,white] (7.,.5) circle (5pt);
\draw [fill=white] (7.668244420693079,1.0896975182725543) circle (1.5pt) node[below,xshift=-6.5pt,yshift=1.4pt] {$A_1$};
\draw[color=blue] (6.4,5) node {$l_1$};
\draw [fill=white] (1.4429125843792387,1.24477695903482) circle (1.5pt) node[anchor=south west,yshift=-1.4pt] {$A_2$};
\draw[color=blue] (-3,1.8) node {$l_2$};
\draw[color=blue] (-1.3,-8) node {$l_3$};

\draw [fill=white,white] (-0.99,-3.18) circle (4.5pt) ;
\draw [fill=white,white] (-0.7,-3.17) circle (3.3pt) ;
\draw [fill=white] (-0.09997270428324008,-3.206917611409948) circle (1.5pt) node[left,yshift=1.5pt] {$A_3$};
\draw[color=blue] (9.6,-10) node {$l_4$};
\draw [fill=white] (4.146182542718997,9.044535168162536) circle (1.5pt);
\draw[color=black] (8.4,8) node {$C_2$};
\draw [fill=white] (-6.23276325001621,1.4359859655855958) circle (1.5pt);
\draw[color=black] (-4,4.4) node {$C_3$};
\draw [fill=white] (-3.5076794741867094,-13.039190844372085) circle (1.5pt);
\draw[color=black] (-5.9,-10) node {$C_4$};
\draw [fill=white] (14.446504433121106,-14.219504822778118) circle (1.5pt);
\draw[color=black] (16.8,-10) node {$C_5$};
\draw [fill=white] (17.85982895558124,0.8358146295030996) circle (1.5pt);
\draw[color=black] (15,5.3) node {$C_1$};
\draw [fill=red] (14.640520013605277,-4.656391920532995) circle (1.5pt) node[red,right,yshift=-3.5pt,xshift=-1] {$X_1$};
\draw [fill=red] (0.7070868254103599,6.826624237115578) circle (1.5pt)  node[red,left,xshift=1pt,yshift=1pt]  {$X_2$};
\draw [fill=red] (1.6093595709876263,-0.017861169381211097) circle (1.5pt) node[red,right,xshift=-1pt,yshift=-.5pt]  {$X_3$};
\draw [fill=red] (-4.674105517688726,-11.740757360603393) circle (1.5pt) node[red,left,xshift=1pt,yshift=-.5pt]  {$X_4$};
\draw [fill=red] (11.853425465831393,-4.409200242649938) circle (1.5pt) node[red,above,yshift=-1pt]  {$X_5$};
\end{small}
\end{tikzpicture}
\caption{Illustration for Corollary~\ref{cor-fivecircles} for $n=5$ lines $l_1,\ldots,l_5$.
The red polygon through the pivots $A_1,A_2,\ldots, A_n$ closes for every position of the starting point $X_1$ on $C_1$.}\label{fig-fivecircles}
\end{center}
\end{figure}
\begin{proof}
Let $A_{i,i+2}$ denote the intersection of the lines $l_i$ and $l_{i+2}$, i.e., $C_i$ is the circumcircle
of the triangle $A_{i-1}A_iA_{i-1,i+1}$. 
Consider the point $A_i$ with the transfer angle $\mu_{A_i}$ of the map $\varphi_{A_i}$. According to Lemma~\ref{lem-2} this transfer angle
is the angle between the tangents $t_{i+1}$ and $t_i$ (see Figure~\ref{fig-proof-fivecircles}). Let
$\varepsilon_i:= \sphericalangle A_{i-1}A_{i-1,i+1}A_i$. This angle equals the angle between the line $l_i$ and the tangent $t_i$ 
(see Figure~\ref{fig-proof-fivecircles}). Similarly, the angle $\varepsilon_{i+1}=\sphericalangle A_iA_{i,i+2}A_{i+1}$ appears also
as angle between the tangent $t_{i+1}$ and the line $l_{i+1}$. The angle $\omega_i$ between the lines $l_{i+1}$ and $l_i$
is the exterior angle of the polygon $A_1A_2\ldots A_n$ in the vertex $A_i$. 
We have 
$\mu_{A_i}=\omega_i+\epsilon_i+\epsilon_{i+1}$. Observe that 
$\varepsilon_{i}=\pi-(\omega_{i-1}+\omega_{i})$, and
$\varepsilon_{i+1}=\pi-(\omega_{i}+\omega_{i+1})$. Hence, we get 
$\mu_{A_i}=2\pi-(\omega_{i-1}+\omega_{i}+\omega_{i+1})$. Taking the sum over all transfer angles results in
$$
\sum_{i=1}^n\mu_{A_i}=2n\pi -3\sum_{i=1}^n \omega_i.
$$
Since the sum of the exterior angles $\sum_{i=1}^n \omega_i$ of a polygon is a multiple of $2\pi$, 
 the closing condition of Theorem~\ref{thm-closingcondition} is satisfied.
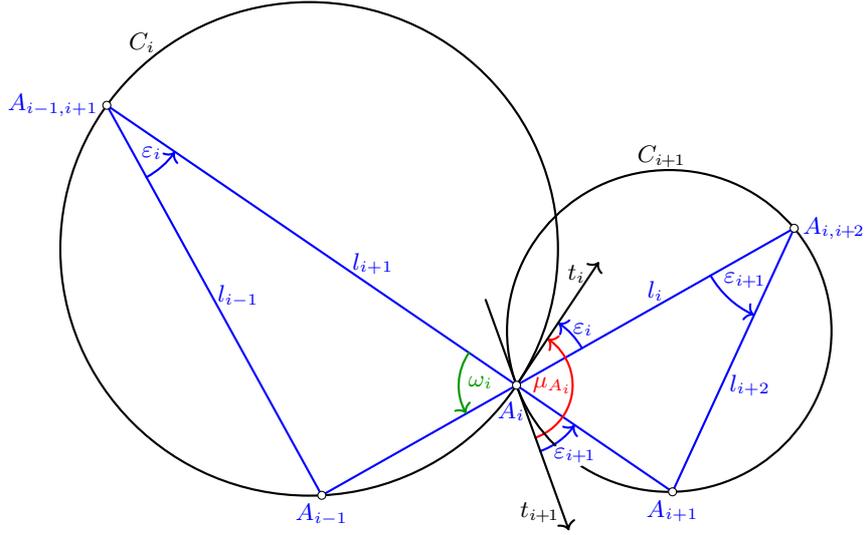
\begin{figure}[h!]
\begin{center}
\begin{tikzpicture}[line cap=round,line join=round,x=12,y=12]
\draw[line width=0.8pt,color=blue] (8.40381740677802,1.53967312) -- (23.13540,9.936679) -- (19.342023,1.65016) -- (1.70094367,13.8037223) -- cycle;
\draw [shift={(1.7009436706780645,13.803722328087488)},line width=0.8pt,->,blue] (-61.341342:2.5780) arc (-61.34134:-34.5643:2.57802) ;
\draw [shift={(23.13540803556913,9.936679788029819)},line width=0.8pt,->,blue] (-150.31685:3) arc (-150.31685:-114.59725:3) ;
\draw [shift={(14.477364714794994,5.001595095188559)},line width=0.8pt,->,red] (-70.28393291499289:1.75) arc (-70.28393:56.460154928:1.75);
\draw [shift={(14.477364714794994,5.001595095188559)},line width=0.8pt,->,blue] (-70.2839329:2.2) arc (-70.28393:-34.5643272:2.2) ;
\draw [shift={(14.477364714794994,5.001595095188562)},line width=0.8pt,->,blue] (29.68314017:2.35) arc (29.68314017:56.4601549:2.35) ;

\draw [shift={(14.477364714794994,5.001595095188559)},line width=0.8pt,->,darkgreen](145.4356727:1.8) arc (145.435672:209.683148:1.8);
\draw [line width=0.8pt] (8.010452203350463,9.288421581589764) circle (7.758726619438833);
\draw [line width=0.8pt] (19.240018671421453,6.708379027154118) circle (5.059247384837539);

\draw [line width=0.8pt,->] (14.477364714794994,5.001595095188559)-- (17.04603217931246,8.876570631689395);
\draw [line width=0.8pt,->] (13.514928021629942,7.687203296003002)-- (16.11249131385353,0.43889561065390126);
\begin{small}
\draw [fill=white] (8.40381740677802,1.5396731296188835) circle (1.5pt) node[blue,below] {$A_{i-1}$};
\draw [fill=white] (23.13540803556913,9.936679788029819) circle (1.5pt) node[blue,right] {$A_{i,i\hspace{-.5pt}+\hspace{-.7pt}2}$};
\draw [fill=white] (19.34202344865542,1.6501600593348167) circle (1.5pt) node[blue,below] {$A_{i+1}$};
\draw [fill=white] (1.7009436706780645,13.803722328087488) circle (1.5pt) node[blue,left] {$A_{i-1,i+1}$};
\draw[color=blue] (18.808003288361743,7.96) node {$l_{i}$};
\draw[color=blue] (21.754321414119964,5.) node {$l_{i+2}$};
\draw[color=blue] (10,8.831810490870485) node {$l_{i+1}$};
\draw[color=blue] (5.8,7.800599146855108) node {$l_{i-1}$};
\draw [fill=white] (14.477364714794994,5.001595095188559) circle (1.5pt) node[blue,below,xshift=-2pt,yshift=-3pt] {$A_{i}$};
\draw[color=black] (2.8,15.77) node {$C_{i}$};
\draw[color=black] (19,12.2) node {$C_{i\hspace{-.5pt}+\hspace{-.5pt}1}$};
\draw[color=black] (21.6,8.3) node[blue] {$\epsilon_{i\hspace{-.4pt}+\hspace{-.7pt}1}$};
\draw[color=black] (16.35,8.5) node {$t_{i}$};
\draw[color=black] (15.2,1.) node {$t_{i\hspace{-.4pt}+\hspace{-.7pt}1}$};
\draw[color=black] (15.6,4.99) node[red] {$\mu_{A_i}$};

\draw [fill=white,white] (16.3,2.8) circle (4.5pt);
\draw [fill=white,white] (15.9,2.8) circle (4.pt);

\draw[color=black] (16.3,2.8) node[blue] {$\epsilon_{i\hspace{-.4pt}+\hspace{-.7pt}1}$};

\draw[color=blue] (16.55,6.75) node {$\epsilon_{i}$};
\draw[color=blue] (3.1,12.3) node {$\epsilon_{i}$};

\draw[color=black] (13.35731475570903,5.1) node[darkgreen] {$\omega_{i}$};
\end{small}
\end{tikzpicture}
\caption{Proof of Corollary~\ref{cor-fivecircles}.}\label{fig-proof-fivecircles}
\end{center}
\end{figure}
\end{proof}

We will see in Section~\ref{sec-4} that for $n=4$ Corollary~\ref{cor-fivecircles} together 
with Lemma~\ref{lem-lighthouse} implies Steiner's quadrilateral theorem~\cite{steiner2},
even in an extended version (see Corollary~\ref{cor-steiner}).

\section{Neat special cases}
\begin{theorem}\label{thm-ab}
Let $C_1\mathbin{\intersect} C_2 \mathbin{\intersect} C_3\mathbin{\intersect} \cdots \mathbin{\intersect}C_n\mathbin{\intersect}C_1$ be a closed
chain of circles. Let $A_i,B_i$ be the intersection points of $C_i$ and $C_{i+1}$, where $A_i=B_i$ if $C_i$ touches $C_{i+1}$. 
Let $\varphi_A:=\varphi_{A_n}\circ\cdots\circ\varphi_{A_2}\circ\varphi_{A_1}$ and
$\varphi_B:=\varphi_{B_n}\circ\cdots\circ\varphi_{B_2}\circ\varphi_{B_1}$. Then 
$\varphi_B\circ\varphi_A(X)=X$ holds for all points $X\in C_1$ (see Figure~\ref{fig-ab}).
\end{theorem}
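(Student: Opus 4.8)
The plan is to show that each of $\varphi_A$ and $\varphi_B$, viewed as a self-map of $C_1$, is a rotation of $C_1$, and that the two rotation angles are opposite; their composition is then the rotation by angle zero, which is the identity, giving $\varphi_B\circ\varphi_A(X)=X$ for every $X\in C_1$.

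First I would record the rotational nature of a single step. Fix a global reference direction and let the angular position of a point $X\in C_i$ be the angle $\alpha$ that $\overrightarrow{M_iX}$ makes with it. By Definition~\ref{def-mu}, the parallel point $X'\in C_{i+1}$ has $\overrightarrow{M_{i+1}X'}$ pointing in the same direction, i.e.\ at angular position $\alpha$, while $\varphi_{A_i}(X)$ sits at angular position $\alpha+\mu_{A_i}$, where $\mu_{A_i}$ is the signed transfer angle. Since $\mu_{A_i}$ is independent of $X$, the map $\varphi_{A_i}\colon C_i\to C_{i+1}$ simply adds the constant $\mu_{A_i}$ to the angular coordinate; once $C_i$ and $C_{i+1}$ are identified through their centers and the common reference direction, it is the rotation by $\mu_{A_i}$.

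Next I would compose these steps around the closed chain. Because the chain returns to $C_1$, the angular coordinate on the initial and on the final circle is one and the same coordinate on $C_1$. Hence $\varphi_A=\varphi_{A_n}\circ\cdots\circ\varphi_{A_1}$ is the rotation of $C_1$ by $\Theta_A:=\sum_{i=1}^n\mu_{A_i}$, and likewise $\varphi_B$ is the rotation of $C_1$ by $\Theta_B:=\sum_{i=1}^n\mu_{B_i}$; this is precisely the principle already underlying Theorem~\ref{thm-closingcondition}. I would then invoke the mirror-symmetry relation $\mu_{B_i}=-\mu_{A_i}$ observed in the text, which yields $\Theta_B=-\Theta_A$. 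Since rotations of a circle add their angles, $\varphi_B\circ\varphi_A$ is the rotation by $\Theta_A+\Theta_B=0$, hence the identity on $C_1$.

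The only delicate point is the bookkeeping of orientation: I must fix one global sense of rotation and verify that the transfer angle $\mu_{A_i}$ enters each step with the same sign, and that the relation $\mu_{B_i}=-\mu_{A_i}$ is read with that same orientation. Here one should also check the touching case $A_i=B_i$, where $\delta_{A_i}=\gamma_{A_i}=0$ forces $\mu_{A_i}=\mu_{B_i}=\pi$, so that $\mu_{A_i}+\mu_{B_i}\equiv0\pmod{2\pi}$ still holds and the termwise cancellation survives modulo $2\pi$. Once the orientations are pinned down consistently, the conclusion is immediate and requires no further geometric input.
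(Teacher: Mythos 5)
Your proof is correct and takes essentially the same route as the paper: the paper's own proof likewise combines the closing condition of Theorem~\ref{thm-closingcondition} (whose content is exactly your rotation principle for transfer angles) with the mirror-symmetry relation $\mu_{B_i}=-\mu_{A_i}$ to conclude that the total transfer angle of $\varphi_B\circ\varphi_A$ vanishes. If anything, your explicit treatment of the touching case, where $\mu_{A_i}=\mu_{B_i}=\pi$ and the cancellation $\mu_{A_i}+\mu_{B_i}\equiv 0$ holds only modulo $2\pi$, is a detail the paper's one-line proof glosses over.
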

\begin{proof}
The claim follows immediately from Theorem~\ref{thm-closingcondition} if we use the fact that the sum of the transfer angles
$\mu_{A_i}+\mu_{B_i}=0$ for all $i$. Therefore the total sum of the transfer angles along the chain vanishes.
\end{proof}
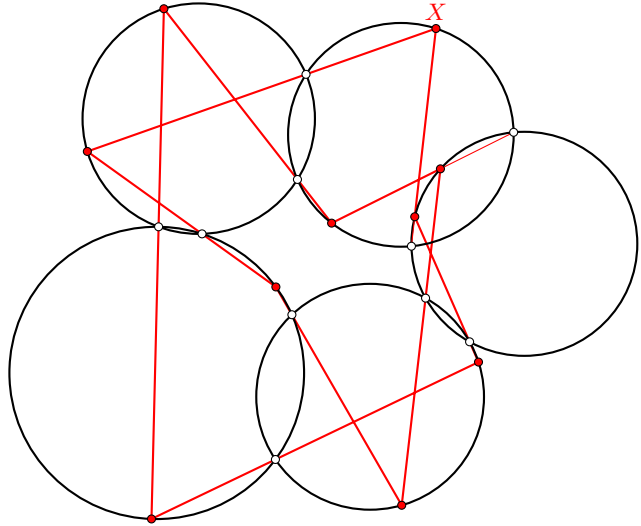
\begin{figure}[h!]
\begin{center}
\begin{tikzpicture}[line cap=round,line join=round,x=55,y=55]
\draw[line width=.8pt,color=red] (7.019667591018827,0.16582876428965498) -- (9.24439279619086,1.242011206873463) -- (8.810708211574527,2.238892943280372) -- (8.955056070761842,3.5301236770449336) -- (6.58326651841914,2.686815983771707) -- (7.865949362448734,1.7573368279404074) -- (8.722019233142806,0.25964130845981814) -- (8.98549890092734,2.5676023648316653) -- (8.24516033402484,2.195101669588272) -- (7.10385586447076,3.665830503595402) -- cycle;
\draw [line width=0.8pt] (8.507181085786682,1.003951723542948) circle (0.7746956973939061);
\draw [line width=0.8pt] (9.556418916231944,2.0531895539882123) circle (0.7684856562326098);
\draw [line width=0.8pt] (8.716387887906961,2.800781143533427) circle (0.7674001776754021);
\draw [line width=0.8pt] (7.341115179678269,2.9117497362279074) circle (0.7905250067937619);
\draw [line width=0.8pt] (7.055356148792899,1.1676099284452897) circle (1.0024166668671326);

\draw [line width=0.4pt,color=red] (8.810708211574527,2.238892943280372)-- (8.788109335558955,2.036739870667267);
\draw [line width=0.4pt,color=red] (8.98549890092734,2.5676023648316653)-- (9.483589975260754,2.8182164484123744);

\begin{small}
\draw [fill=white] (8.88424784483196,1.680689547802625) circle (1.5pt);
\draw [fill=white] (9.183918910046357,1.3810184825882286) circle (1.5pt);
\draw [fill=white] (7.8629199286930955,0.5737413273167934) circle (1.5pt);
\draw [fill=white] (9.483589975260754,2.8182164484123744) circle (1.5pt);
\draw [fill=white] (8.788109335558955,2.036739870667267) circle (1.5pt);
\draw [fill=white] (8.070854953535738,3.2157392900233086) circle (1.5pt);
\draw [fill=white] (8.012674744330221,2.494691541858395) circle (1.5pt);
\draw [fill=white] (7.067874245471225,2.169948429785313) circle (1.5pt);
\draw [fill=white] (7.974862533948821,1.566793071464255) circle (1.5pt);
\draw [fill=red] (7.019667591018827,0.16582876428965498) circle (1.5pt);
\draw [fill=red] (9.24439279619086,1.242011206873463) circle (1.5pt);
\draw [fill=red] (8.810708211574527,2.238892943280372) circle (1.5pt);
\draw [fill=white] (7.363351754622448,2.1215375361822244) circle (1.5pt);
\draw [fill=red] (8.955056070761842,3.5301236770449336) circle (1.5pt) node[red,above] {$X$};
\draw [fill=red] (6.58326651841914,2.686815983771707) circle (1.5pt);
\draw [fill=red] (7.865949362448734,1.7573368279404074) circle (1.5pt);
\draw [fill=red] (8.722019233142806,0.25964130845981814) circle (1.5pt);
\draw [fill=red] (8.98549890092734,2.5676023648316653) circle (1.5pt);
\draw [fill=red] (8.24516033402484,2.195101669588272) circle (1.5pt);
\draw [fill=red] (7.10385586447076,3.665830503595402) circle (1.5pt);
\end{small}
\end{tikzpicture}
\caption{Illustration for Theorem~\ref{thm-ab}. The red polygon closes for every position of the starting point $X$ on the circle.}\label{fig-ab}
\end{center}
\end{figure}

Let us now consider chains of touching circles. We will use the notation $C_1\mathbin{\touch} C_2$ for two touching circles $C_1$ and $C_2$.
Then we have the following.
\begin{theorem}\label{thm-touching}
Let $C_1\mathbin{\touch} C_2 \mathbin{\touch} C_3\mathbin{\touch} \cdots \mathbin{\touch}C_n\mathbin{\touch}C_1$ be a closed
chain of touching circles. Let $A_i$ be the common point of $C_i$ and $C_{i+1}$ and
 $\varphi=\varphi_{A_n}\circ\cdots\circ\varphi_{A_2}\circ\varphi_{A_1}$. Then the following holds: 
If $n$ is even, then $\varphi$ is the identity map on $C_1$. 
If $n$ is odd, then $\varphi\circ\varphi$ is the identity map on $C_1$ (see Figure~\ref{fig-touching}).
\end{theorem}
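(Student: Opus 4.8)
The plan is to read the statement off directly from the closing condition of Theorem~\ref{thm-closingcondition}, once the transfer angle at a point of tangency has been computed. First I would observe that when $C_i$ touches $C_{i+1}$ the two intersection points coincide, $A_i=B_i$, so the relevant central angles degenerate: $\delta_{A_i}=\sphericalangle A_iM_iB_i=0$ and $\gamma_{A_i}=\sphericalangle B_iM_{i+1}A_i=0$. Feeding this into Lemma~\ref{lem-transfer} then yields one and the same transfer angle at every pivot,
$$
\mu_{A_i}=\pi-\tfrac12(\delta_{A_i}+\gamma_{A_i})=\pi .
$$

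Next I would sum these along the chain to get the total transfer angle $\sum_{i=1}^n\mu_{A_i}=n\pi$. By Theorem~\ref{thm-closingcondition}, the map $\varphi$ is the identity on $C_1$ if and only if this total is an integer multiple of $2\pi$. For even $n$ we have $n\pi=2\cdot\tfrac{n}{2}\,\pi\in 2\pi\mathbb{Z}$, so the closing condition holds and $\varphi$ is the identity, which settles the even case.

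For odd $n$ the total $n\pi$ is an odd multiple of $\pi$ and hence not in $2\pi\mathbb{Z}$, so $\varphi$ itself is not the identity. To obtain $\varphi\circ\varphi=\mathrm{id}$, I would apply Theorem~\ref{thm-closingcondition} to the doubled chain $C_1\mathbin{\touch}\cdots\mathbin{\touch}C_n\mathbin{\touch}C_1\mathbin{\touch}\cdots\mathbin{\touch}C_n\mathbin{\touch}C_1$ consisting of $2n$ touching circles: its associated composition is precisely $\varphi\circ\varphi$, and its total transfer angle is $2n\pi\equiv 0\pmod{2\pi}$, so the closing condition is met and $\varphi\circ\varphi$ is the identity on $C_1$. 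Geometrically this reflects the fact that $\varphi$ is the half-turn of $C_1$ about its center $M_1$, whose square is the identity.

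The only step that requires genuine care is the passage to the tangency case, namely confirming that Lemma~\ref{lem-transfer} remains valid when $A_i=B_i$ and still returns $\mu_{A_i}=\pi$ — exactly the limiting configuration the author flags in the remark following that lemma. If one prefers to bypass the degenerate limit, the same value follows directly from Lemma~\ref{lem-2}: at a point of tangency the tangents $t_1$ and $t_2$ to the two circles coincide as a single line, and with the prescribed orientations ($t_1$ directed towards the inside of $C_2$ and $t_2$ towards the outside of $C_1$) they point in opposite directions, so $\mu_{A_i}=\sphericalangle t_2t_1=\pi$. Everything beyond this is bookkeeping of multiples of $2\pi$.
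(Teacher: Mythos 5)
Your proposal is correct and follows essentially the same route as the paper: setting $\delta_{A_i}=\gamma_{A_i}=0$ at each point of tangency, invoking the closing condition of Theorem~\ref{thm-closingcondition} for even $n$, and passing to the doubled chain of $2n$ circles for odd $n$. Your extra touches---the explicit value $\mu_{A_i}=\pi$, the verification of the degenerate case via Lemma~\ref{lem-2}, and the observation that for odd $n$ the map $\varphi$ is the half-turn of $C_1$ about its center---are sound refinements of the same argument, not a different one.
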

Note that this situation of a chain of touching circle occurs in particular in Steiner's closing theorem.

\begin{figure}[h!]
\begin{center}
\begin{tikzpicture}[line cap=round,line join=round,x=18,y=18]
\draw[line width=0.8pt,color=red] (4.743563968500389,-2.072614800665446) -- (6.952893013689016,4.3824541635110625) -- (3.490565311448319,3.375059183765999) -- (1.0073603535830142,5.623309537967394) -- (0.47694247676241214,0.32158323491683377) -- (2.440561142480428,0.7302685728108522) -- cycle;
\draw [line width=0.8pt] (2.072705033346347,-0.6792163643480142) circle (1.456696916007238);
\draw [line width=0.8pt] (5.155643216437474,-0.4936835869082452) circle (1.6318189496255953);
\draw [line width=0.8pt] (6.501119602373459,2.6514299873540264) circle (1.7890064040164408);
\draw [line width=0.8pt] (3.9266678662436973,5.04603858788007) circle (1.7269503777678348);
\draw [line width=0.8pt] (0.5283998489571511,3.78811474304257) circle (1.8966663123255578);
\draw [line width=0.8pt] (0.678964519970886,1.0956549977772765) circle (0.8);
\begin{small}
\draw [fill=white] (3.5267712382578833,-0.5917099255039995) circle (1.5pt);
\draw [fill=white] (5.797468985328758,1.006613972492653) circle (1.5pt);
\draw [fill=white] (5.19117442141191,3.869866500467226) circle (1.5pt);
\draw [fill=white] (2.307114602717334,4.446534764811715) circle (1.5pt);
\draw [fill=white] (0.6342976154078545,1.894407066717794) circle (1.5pt);
\draw [fill=white] (1.1730461145312752,0.46646235802641045) circle (1.5pt);
\draw [fill=red] (4.743563968500389,-2.072614800665446) circle (1.5pt);
\draw [fill=red] (6.952893013689016,4.3824541635110625) circle (1.5pt)  node[above,red] {$X$};
\draw [fill=red] (3.490565311448319,3.375059183765999) circle (1.5pt);
\draw [fill=red] (1.0073603535830142,5.623309537967394) circle (1.5pt);
\draw [fill=red] (0.47694247676241214,0.32158323491683377) circle (1.5pt);
\draw [fill=red] (2.440561142480428,0.7302685728108522) circle (1.5pt);
\end{small}
\end{tikzpicture}
\definecolor{red}{rgb}{1.,0.,0.}
\begin{tikzpicture}[line cap=round,line join=round,x=19,y=19]
\draw[line width=0.8pt,color=red] (2.1453339778057954,-2.1341015630945357) -- (4.722647069495119,0.7435001014018278) -- (6.0330138512271985,0.5088691328795654) -- (3.745981632863882,5.42118473481178) -- (0.32719925544748846,0.3213282827053435) -- (2.0000760888868996,0.775668834398507) -- (4.848393206238091,-1.7754156629469056) -- (5.863920718004747,3.896101297218477) -- (3.868491989057657,2.9670873326526306) -- (0.06793426556592934,5.514860959508458) -- cycle;
\draw [line width=0.8pt] (2.072705033346347,-0.6792163643480142) circle (1.456696916007238);
\draw [line width=0.8pt] (4.785520137866608,-0.5159577807725406) circle (1.2610262406855737);
\draw [line width=0.8pt] (5.948467284615967,2.2024852150490215) circle (1.695725082585253);
\draw [line width=0.8pt] (3.807236810960772,4.194136033732205) circle (1.2285767015798228);
\draw [line width=0.8pt] (0.19756676050670677,2.9180946211069005) circle (2.6);
\begin{small}
\draw [fill=white] (3.5267712382578833,-0.5917099255039995) circle (1.5pt);
\draw [fill=white] (5.2815060048684614,0.6434322310146967) circle (1.5pt);
\draw [fill=white] (4.706824496093136,3.3573907269494585) circle (1.5pt);
\draw [fill=white] (2.6489064782947525,3.784658864538873) circle (1.5pt);
\draw [fill=white] (1.3993719755193155,0.6125221921495062) circle (1.5pt);
\draw [fill=red] (2.1453339778057954,-2.1341015630945357) circle (1.5pt);
\draw [fill=red] (4.722647069495119,0.7435001014018278) circle (1.5pt);
\draw [fill=red] (6.0330138512271985,0.5088691328795654) circle (1.5pt);
\draw [fill=red] (3.745981632863882,5.42118473481178) circle (1.5pt);
\draw [fill=red] (0.32719925544748846,0.3213282827053435) circle (1.5pt);
\draw [fill=red] (2.0000760888868996,0.775668834398507) circle (1.5pt);
\draw [fill=red] (4.848393206238091,-1.7754156629469056) circle (1.5pt);
\draw [fill=red] (5.863920718004747,3.896101297218477) circle (1.5pt) node[red,above] {$X$};
\draw [fill=red] (3.868491989057657,2.9670873326526306) circle (1.5pt);
\draw [fill=red] (0.06793426556592934,5.514860959508458) circle (1.5pt);
\end{small}
\end{tikzpicture}
\caption{Illustration for Theorem~\ref{thm-touching}. On the left: For an even number of touching circles the red polygon closes for any position of the starting point $X$.
On the right: For an odd number of touching circles the red polygon closes after the second round for any position of the starting point $X$.}\label{fig-touching}
\end{center}
\end{figure}
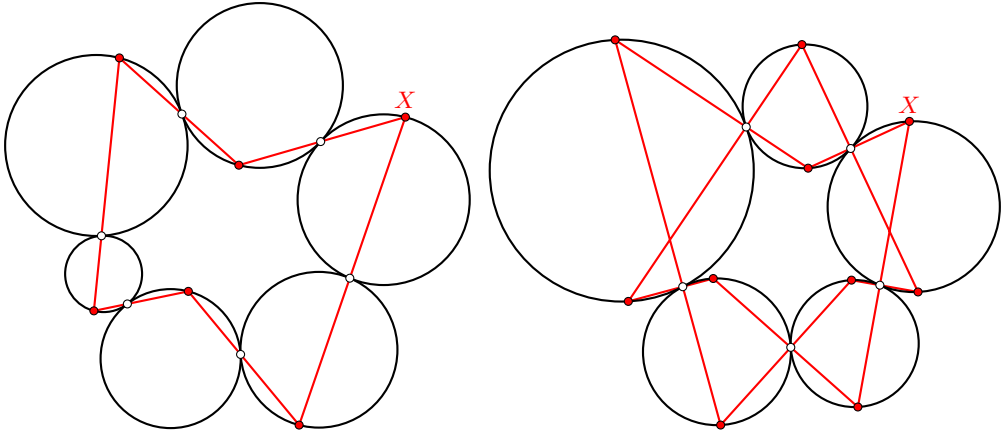

\begin{proof}
The claim follows immediately from the fact that we have $\delta_{A_i}=\gamma_{A_i}=0$ in the case of touching circles.
Then, the closing condition in Theorem~\ref{thm-closingcondition} is trivially satisfied for an even number $n$.
If $n$ is odd then apply the result to the chain  that runs through twice
$$
C_1\mathbin{\touch} C_2 \mathbin{\touch} C_3\mathbin{\touch} \cdots \mathbin{\touch}C_n\mathbin{\touch}C_1\mathbin{\touch} C_2 \mathbin{\touch} C_3\mathbin{\touch} \cdots \mathbin{\touch}C_n\mathbin{\touch} C_1
$$
and we are done.
\end{proof}

Miquel's triangle theorem~\cite[Th\'eor\`eme~I, Planche~II, Fig.~1]{miquel} turns out to be a special case of Theorem~\ref{thm-2}. To see this, consider the case of 
three circles.
\begin{corollary}\label{cor-miquel}
Let $C_1\mathbin{\intersect} C_2\mathbin{\intersect} C_3\mathbin{\intersect} C_1$ be a closed chain of three circles which all
pass through a point $B$. Let $A_i$ be the other common point of $C_i$ and $C_{i+1}$. Then, the 
map $\varphi_{A_3}\circ \varphi_{A_2}\circ \varphi_{A_1}$ is the identity map on $C_1$ (see Figure~\ref{fig-miquel} on the right). 
\end{corollary}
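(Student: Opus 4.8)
The plan is to deduce the Corollary directly from Theorem~\ref{thm-2} by producing a point fixed by the companion map built from the common point $B$. Since all three circles pass through $B$, the point $B$ is one of the two intersection points of every consecutive pair $C_i$ and $C_{i+1}$, the other being $A_i$. Thus, in the language of Theorem~\ref{thm-2}, we are exactly in the situation $B_1=B_2=B_3=B$, and the companion map $\varphi_B=\varphi_{B_3}\circ\varphi_{B_2}\circ\varphi_{B_1}$ has all three pivots equal to $B$. This matches the paper's framing that Miquel's triangle theorem is a special case of Theorem~\ref{thm-2}.

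First I would show by a pure collinearity argument that this map $\varphi_B$ is the identity on $C_1$. Pick $X_1\in C_1$ with $X_1\neq B$ and let $\ell$ be the line through $X_1$ and $B$. Because $B\in C_2$, the definition of $\varphi_{B}$ sends $X_1$ to the second intersection $X_2$ of $\ell$ with $C_2$; in particular $X_2\in\ell$. Since $X_2$ and $B$ both lie on $\ell$, applying $\varphi_B$ again uses the very same line $\ell$, so $X_3$ is the second intersection of $\ell$ with $C_3$, and one further step lands on the second intersection of $\ell$ with $C_1$. But $\ell$ meets $C_1$ precisely in $B$ and $X_1$, so this final point is $X_1$ itself. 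Hence $\varphi_B(X_1)=X_1$ for every $X_1\neq B$, and by continuity also for $X_1=B$, where the tangent convention of the definition takes over; thus $\varphi_B=\mathrm{id}$.

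To finish, I would invoke Theorem~\ref{thm-2}. The points $A_i$ and $B_i$ enter that theorem in completely symmetric roles, so the statement holds verbatim with the labels $A$ and $B$ interchanged: if $\varphi_B$ fixes a single point of $C_1$, then $\varphi_A$ is the identity. Since we have just shown that $\varphi_B$ fixes \emph{every} point of $C_1$, it certainly fixes one, and therefore $\varphi_{A_3}\circ\varphi_{A_2}\circ\varphi_{A_1}=\varphi_A$ is the identity on $C_1$, as claimed.

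The main obstacle I anticipate is the bookkeeping of the phrase ``second intersection'': one must verify that at each step $\varphi_B$ returns the intersection of $\ell$ with the next circle that is distinct from the pivot $B$, and that in the last step this distinguished point is $X_1$ rather than $B$. The boundary case $X_1=B$, disposed of above by continuity, is the only genuine subtlety; once the single-line picture through $B$ is in place, everything else is immediate.
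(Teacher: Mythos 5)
Your proposal is correct and follows the paper's own route exactly: identify the common point $B$ as the second intersection point $B_1=B_2=B_3$ of each consecutive pair, note that $\varphi_B=\varphi_{B_3}\circ\varphi_{B_2}\circ\varphi_{B_1}$ is the identity because all three pivots lie on the single line $X_1B$, and then apply Theorem~\ref{thm-2} with the labels $A$ and $B$ interchanged. The only difference is that you spell out the collinearity argument and the relabeling symmetry, which the paper compresses into the word ``obviously'' and an implicit swap.
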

\begin{proof}
The points $B_i=B$ and $A_i$ are the common points of  $C_i$ and $C_{i+1}$. 
Obviously the map $\varphi_{B_3}\circ \varphi_{B_2}\circ \varphi_{B_1}$ is the identity map on $C_1$. Hence, according to Theorem~\ref{thm-2}
this is also the case for $\varphi_{A_3}\circ \varphi_{A_2}\circ \varphi_{A_1}$.
\end{proof}
Notice that the statement of Corollary~\ref{cor-miquel} is true for any number  $n\ge 3$ of
circles which pass through a common point $B$ (see Figure~\ref{fig-miquel} on the left).
\begin{figure}[h!]
\begin{center}
\begin{tikzpicture}[line cap=round,line join=round,x=26,y=26]
\draw[line width=0.8pt,color=red] (-0.147154,-1.149214285294474) -- (4.01287,1.1929519363342183) -- (-1.8670710592958077,4.428821372416705) -- cycle;
\draw[line width=0.5pt,dash pattern=on 2pt off 2pt,color=red] (0.626117942,-1.65985) -- (4.293239321051615,2.1274012) -- (-2.94831676,3.70392) -- cycle;
\draw [line width=0.8pt] (1.0990612725844464,-0.10287117046347219) circle (1.6272327499626194);
\draw [line width=0.8pt] (2.5800787401574805,2.1321188260558337) circle (1.7131670764744231);
\draw [line width=0.8pt] (-1.187452123230641,2.246286427976685) circle (2.285900409457977);
\begin{small}
\draw [fill=white] (0.98,1.52) circle (1.5pt)  node[anchor=south west,yshift=-1.4pt,xshift=-.5pt] {$B$};
\draw [fill=white] (-0.52,0.06) circle (1.5pt) node[below,xshift=-6pt,yshift=-1pt] {$A_3$};
\draw [fill=white] (2.64,0.42) circle (1.5pt) node[below,xshift=8.8pt,yshift=1.2pt]  {$A_1$};
\draw[color=black] (2.77,-1) node {$C_1$};
\draw [fill=white] (1.02,2.84) circle (1.5pt) node[above,xshift=9pt,yshift=-1.5pt]  {$A_2$};
\draw[color=black] (3.21,3.96) node {$C_2$};
\draw[color=black] (-3.7,2.6) node {$C_3$};
\draw [fill=red] (-0.14715400119318822,-1.149214285294474) circle (1.5pt);
\draw [fill=red] (4.012875650108974,1.1929519363342183) circle (1.5pt);
\draw [fill=red] (-1.8670710592958077,4.428821372416705) circle (1.5pt);
\draw [fill=red] (0.6261179429129198,-1.6598589769250764) circle (1.5pt) ;
\draw [fill=red] (4.293239321051615,2.1274012009761107) circle (1.5pt);
\draw [fill=red] (-2.9483167676432127,3.7039202592312748) circle (1.5pt);
\end{small}
\end{tikzpicture}
\definecolor{red}{rgb}{1.,0.,0.}
\begin{tikzpicture}[line cap=round,line join=round,x=30,y=30]
\draw[line width=0.8pt,color=red] (-0.027924848113163314,-1.27666361647165) -- (4.105217937929536,1.3518035953092236) -- (-0.028608244601448157,3.3458103028508397) -- (-0.7872537626249988,0.7859630556353268) -- cycle;
\draw[line width=0.5pt,dash pattern=on 2pt off 2pt,color=red] (0.5250902507448667,-1.6255148216962014) -- (4.289246768012873,2.0151312862371777) -- (-0.4685192214636319,3.215570589419363) -- (-0.7132199683322104,0.3716239197901742) -- cycle;
\draw [line width=0.8pt] (1.0990612725844464,-0.10287117046347219) circle (1.6272327499626194);
\draw [line width=0.8pt] (2.5800787401574805,2.1321188260558337) circle (1.7131670764744231);
\draw [line width=0.8pt] (0.06895297249334509,2.2082135462880803) circle (1.1417725568707313);
\draw [line width=0.8pt] (0.25989740092711205,0.7592834921981726) circle (1.0474909825071224);
\begin{small}
\draw [fill=white] (0.98,1.52) circle (1.5pt) node[below,xshift=-2pt,yshift=-2pt] {$B$};
\draw [fill=white] (-0.52,0.06) circle (1.5pt) node[below,xshift=-6.7pt,yshift=5pt] {$A_4$};
\draw [fill=white] (2.64,0.42) circle (1.5pt) node[below,xshift=9pt,yshift=1.5pt] {$A_1$};
\draw [fill=white] (1.02,2.84) circle (1.5pt) node[above,xshift=8.5pt,yshift=-2pt] {$A_2$};
\draw [fill=white] (-0.6326969457658537,1.307473870338371) circle (1.5pt)  node[left,xshift=0pt,yshift=0pt] {$A_3$};

\draw[color=black] (2.7,-1) node {$C_1$};
\draw[color=black] (4.,3.474) node {$C_2$};
\draw[color=black] (-1.13,2.883392577438442) node {$C_3$};
\draw[color=black] (.95,.3) node {$C_4$};
\draw [fill=red] (-0.027924848113163314,-1.27666361647165) circle (1.5pt);
\draw [fill=red] (4.105217937929536,1.3518035953092236) circle (1.5pt);
\draw [fill=red] (-0.028608244601448157,3.3458103028508397) circle (1.5pt);
\draw [fill=red] (-0.7872537626249988,0.7859630556353268) circle (1.5pt);
\draw [fill=red] (0.5250902507448667,-1.6255148216962014) circle (1.5pt);
\draw [fill=red] (4.289246768012873,2.0151312862371777) circle (1.5pt);
\draw [fill=red] (-0.4685192214636319,3.215570589419363) circle (1.5pt);
\draw [fill=red] (-0.7132199683322104,0.3716239197901742) circle (1.5pt);
\end{small}
\end{tikzpicture}
\caption{On the right: Miquel's triangle theorem. The red triangle through the pivots $A_i$ closes in every position.
On the right: The closing property for four concurrent circles.}\label{fig-miquel}
\end{center}
\end{figure}
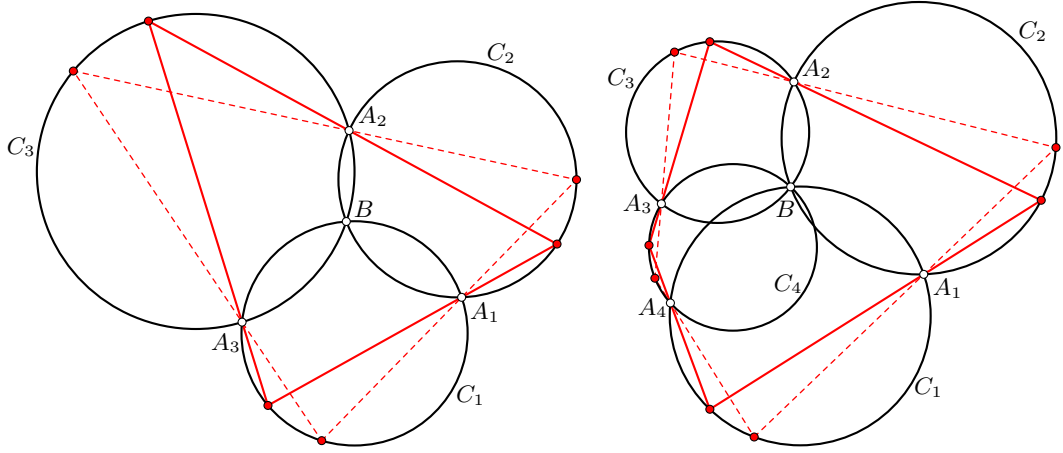

It follows from Lemma~\ref{lem-2} that the transfer angle, and hence the closing property of a chain, is
invariant unter M\"obius transformations. This means also that we can define the following variant of the 
map $\varphi_A$.
\begin{definition}
Let  $C_1\mathbin{\intersect} C_2$ be two intersecting or touching circles, $A$ 
a common point of $C_1$ and $C_2$, and $I$ a point not on $C_1\cup C_2$.
Then $\varphi_A^I: C_1\to C_2, X\mapsto \varphi_A^I(X)$, is defined as follows: 
If $X\neq A$, then the points $X,\varphi_A^I(X),A$, and $I$ are concyclic. 
If $X=A$, then the  circle through the points $A,\varphi_A^I(X)$, and $I$ is the tangent to $C_1$ in $A$ (see Figure~\ref{fig-varphi-I}).
\end{definition}
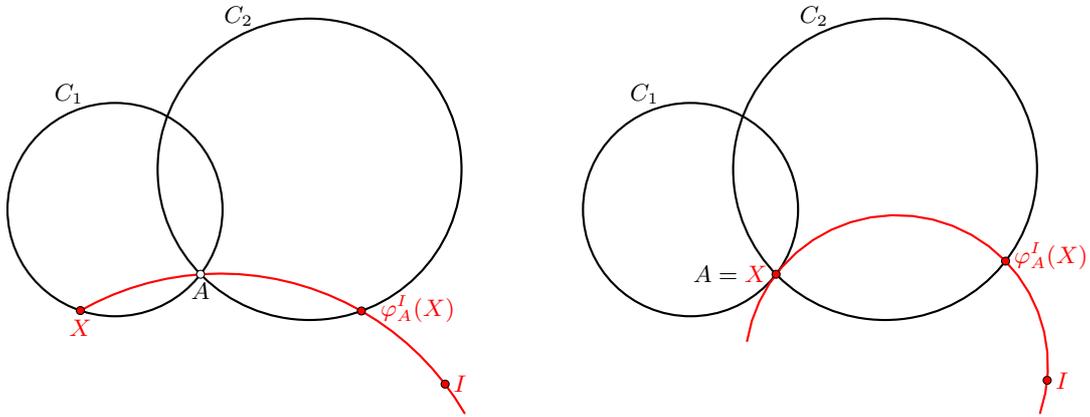
\begin{figure}[h!]
\begin{center}
\begin{tikzpicture}[line cap=round,line join=round,x=20,y=20]
\draw [line width=0.8pt] (-1.14,1.32) circle (2.012063617284503);
\draw [line width=0.8pt] (2.5,2.08) circle (2.842885857715712);
\draw [shift={(0.8339949814558981,-5.1620094624457495)},line width=0.8pt,red]  plot[domain=0.5245550849662706:2.091064294480428,variable=\t]({1.*5.275283483285311*cos(\t r)+0.*5.275283483285311*sin(\t r)},{0.*5.275283483285311*cos(\t r)+1.*5.275283483285311*sin(\t r)});
\begin{small}
\draw [fill=white] (0.46,0.1) circle (1.5pt) node[below] {$A$};
\draw[color=black] (-2,3.5) node {$C_1$};
\draw[color=black] (1.17,4.97) node {$C_2$};
\draw [fill=red] (-1.7884152627299708,-0.5847198342692874) circle (1.5pt) node[red,below] {$X$};
\draw [fill=red] (5.035994041425538,-1.9726804218735685) circle (1.5pt) node[red,right] {$I$};
\draw [fill=red] (3.4697321509154286,-0.5923808776970646) circle (1.5pt) node[red,right,xshift=4pt,yshift=1pt] {$\varphi_A^I(X)$};
\end{small}
\end{tikzpicture}\qquad\qquad
\begin{tikzpicture}[line cap=round,line join=round,x=20,y=20]
\draw [line width=0.8pt] (-1.14,1.32) circle (2.012063617284503);
\draw [line width=0.8pt] (2.5,2.08) circle (2.842885857715712);
\draw [shift={(2.713358076462534,-1.6184775199523205)},line width=.8pt,color=red]  plot[domain=-0.32375084410390365:2.9791107372620984,variable=\t]({1.*2.8338644299508076*cos(\t r)+0.*2.8338644299508076*sin(\t r)},{0.*2.8338644299508076*cos(\t r)+1.*2.8338644299508076*sin(\t r)});
\begin{small}
\draw [fill=red] (0.46,0.1) circle (1.5pt)  node[left] {$A=\textcolor{red}X$};
\draw[color=black] (-2,3.5) node {$C_1$};
\draw[color=black] (1.17,4.97) node {$C_2$};
\draw [fill=red] (5.533125366918801,-1.9007894662339724) circle (1.5pt) node[red,right] {$I$};
\draw [fill=red] (4.754154227181938,0.3477215235194438) circle (1.5pt) node[red,right,yshift=2pt] {$\varphi_A^I(X)$};
\end{small}
\end{tikzpicture}
\caption{The map $\varphi_A^I: C_1\to C_2, X\mapsto \varphi_A^I(X)$.}\label{fig-varphi-I}
\end{center}
\end{figure}
The previous results nicely carry over to the map $\varphi_A^I$. As an example we get the six circles theorem of Miquel~\cite[Th\'eor\`eme~I,  Planche~III, Fig.~1]{miquel6}:
\begin{corollary}\label{cor-6}
Let $C_1\mathbin{\intersect} C_2\mathbin{\intersect} C_3\mathbin{\intersect} C_1$ be a closed chain of three circles which all
pass through a point $B$, and $I$ a point not on any of the three circles. Let $A_i$ be the other common point of $C_i$ and $C_{i+1}$. Then, the 
map $\varphi_{A_3}^I\circ \varphi_{A_2}^I\circ \varphi_{A_1}^I$ is the identity map on $C_1$ (see Figure~\ref{fig-6}). 
\end{corollary}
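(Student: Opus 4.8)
The plan is to reduce the statement to Corollary~\ref{cor-miquel} by means of a Möbius transformation that carries the auxiliary point $I$ to infinity. Concretely, I would choose a Möbius transformation $\Phi$ with $\Phi(I)=\infty$ (for instance an inversion centred at $I$). Since none of the circles $C_1,C_2,C_3$ passes through $I$, their images $C_i':=\Phi(C_i)$ are again genuine circles, and because $\Phi$ is a bijection of the extended plane preserving incidence and concyclicity, the three image circles still share a common point $B':=\Phi(B)$, while $A_i':=\Phi(A_i)$ is the second intersection point of the image pair $C_i'$ and $C_{i+1}'$.

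The crucial observation is that $\Phi$ conjugates each map $\varphi_{A_i}^I$ to the ordinary map $\varphi_{A_i'}$. Indeed, for $X\in C_1$ with $X\neq A_1$, the four points $X,\varphi_{A_1}^I(X),A_1,I$ lie on a common circle $\Gamma$ through $I$; applying $\Phi$ sends $\Gamma$ to a \emph{line}, so the images $\Phi(X),\Phi(\varphi_{A_1}^I(X)),A_1'$ become collinear, which is exactly the defining property of $\varphi_{A_1'}$. The pivot case $X=A_1$ is handled the same way: the circle through $A_1,\varphi_{A_1}^I(X),I$ tangent to $C_1$ at $A_1$ maps to a line through $A_1'$ tangent to $C_1'$ at $A_1'$, matching the tangent clause in the definition of $\varphi_{A_1'}$. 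Hence $\Phi\circ\varphi_{A_i}^I=\varphi_{A_i'}\circ\Phi$ for each $i$.

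Composing the three relations, we obtain
$$
\Phi\circ\bigl(\varphi_{A_3}^I\circ\varphi_{A_2}^I\circ\varphi_{A_1}^I\bigr)\circ\Phi^{-1}
=\varphi_{A_3'}\circ\varphi_{A_2'}\circ\varphi_{A_1'}.
$$
The right-hand side acts on a closed chain of three circles $C_1',C_2',C_3'$ all passing through the common point $B'$, so Corollary~\ref{cor-miquel} shows it is the identity map on $C_1'$. Conjugating back by $\Phi$, the map $\varphi_{A_3}^I\circ\varphi_{A_2}^I\circ\varphi_{A_1}^I$ is therefore the identity on $C_1$, as claimed.

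I expect the only delicate point to be the verification that $\Phi$ conjugates $\varphi_A^I$ to $\varphi_{A'}$, namely checking that the concyclicity condition turns into collinearity (so that circles through $I$ become lines), treating the tangent pivot case carefully, and confirming that $A_i'$ and $B'$ really are the two intersection points of the correct pair of image circles. Once this conjugation is established, the conclusion follows formally from Corollary~\ref{cor-miquel} together with the Möbius invariance of the construction noted just before the statement.
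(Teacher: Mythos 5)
Your proof is correct and is essentially the paper's own argument: the paper gives no separate proof of Corollary~\ref{cor-6}, relying instead on the remark that the closing property is M\"obius-invariant so that ``the previous results nicely carry over'' to $\varphi_A^I$, which is exactly what your conjugation by an inversion centred at $I$ (sending circles through $I$ to lines and reducing to Corollary~\ref{cor-miquel}) makes precise.
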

\begin{figure}[h!]
\begin{center}
\begin{tikzpicture}[line cap=round,line join=round,x=20,y=20]
\draw [line width=0.8pt] (11.781798581457549,-0.8835255097257012) circle (1.5142238971634117);
\draw [line width=0.8pt] (12.085770444767505,-4.348804751459263) circle (3.0990562148536185);
\draw [line width=0.8pt,color=red] (4.963456509761086,-3.492033353329521) circle (4.566923848224915);
\draw [line width=0.8pt] (9.867408429997806,-2.0365104873094126) circle (3.369271932535286);
\draw [line width=0.8pt,color=red] (9.609137169013284,1.874307850746625) circle (2.6422827562921856);
\draw [line width=0.8pt,color=red] (9.23640398434803,-1.4331739547509639) circle (1.2873110953959725);
\begin{small}
\draw [fill=white] (10.498414378608185,-1.687141710792761) circle (1.5pt) node[left,yshift=3.5pt,xshift=2pt] {$A_1$};
\draw [fill=white] (11.936236557787378,0.6228021180524973) circle (1.5pt) node[below,xshift=1pt] {$A_{3}$};
\draw [fill=white] (13.185491893795533,-1.45143315682896) circle (1.5pt) node[right,yshift=3.5pt,xshift=-1.3pt] {$B$};
\draw[color=black] (13.28,0.2) node {$C_1$};
\draw [fill=red] (9.461296741167457,-2.70068849283711) circle (1.5pt) node[red,below,xshift=7.8pt,yshift=3pt]  {$X_2$};
\draw[color=black] (15.4,-3.5) node {$C_2$};
\draw [fill=white] (9.145308034096317,-5.327492768271888) circle (1.5pt) node[above,xshift=7.2pt,yshift=-3pt] {$A_2$};
\draw[color=black] (9.8,1.6) node {$C_3$};
\draw [fill=red] (7.466829646502241,0.3276415508643184) circle (1.5pt) node[red,left,xshift=0pt,yshift=0pt] {$X_3$};
\draw [fill=red] (10.281168097742743,-0.6810849947920117) circle (1.5pt) node[red,below,xshift=9.6pt,yshift=4pt]  {$X_1$};
\draw [fill=red] (8.385229895163278,-0.46742423239526687) circle (1.5pt) node[red,below,xshift=-1pt,yshift=0pt]  {$I$};
\end{small}
\end{tikzpicture}
\caption{Miquel's six circles theorem follows Theorem~\ref{thm-2} applied to $\varphi_{A_3}^I\circ\varphi_{A_2}^I\circ\varphi_{A_1}^I$.}\label{fig-6}
\end{center}
\end{figure}
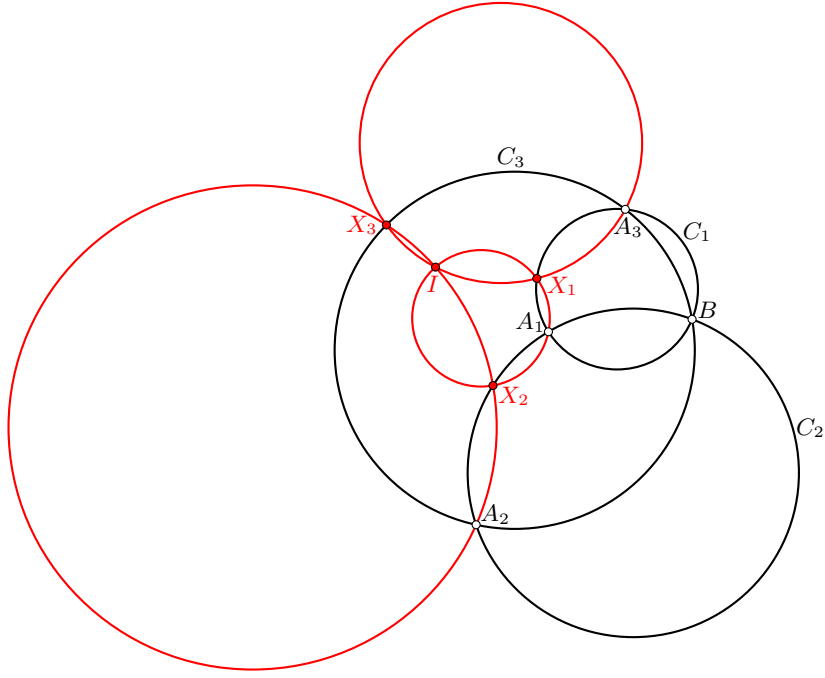

We would also like to point out that the previous closing results can be carried over to circle chains that are {\em not\/} closed.
Figure~\ref{fig-open} shows such a situation for an open chain of four circles.
The red polygon closes in every position if it closes in one position. This can be seen as follows.
Given an open chain of circles $C_1\mathbin{\intersect} C_2\mathbin{\intersect} C_3\mathbin{\intersect}\ldots  \mathbin{\intersect}C_n$
we can applying Theorem~\ref{thm-closingcondition}
to the {\em closed\/} chain 
$$
C_1\mathbin{\intersect} C_2\mathbin{\intersect} C_3\mathbin{\intersect} \ldots C_{n-1}\mathbin{\intersect} C_n \mathbin{\intersect} C_{n-1}
\mathbin{\intersect} C_{n-2}\ldots \mathbin{\intersect} C_2\mathbin{\intersect} C_1.
$$
\begin{figure}[h!]
\begin{center}
\definecolor{red}{rgb}{1.,0.,0.}
\begin{tikzpicture}[line cap=round,line join=round,x=22,y=22]
\draw[line width=.8pt,color=red] (19.51181409493095,-2.4957491784679817) -- (14.101887081770611,-0.8209258292251747) -- (12.00535646525744,-6.527790026858141) -- (2.814473192994684,-3.9640777974907726) -- (10.911377440238992,-1.7914191679535207) -- (15.223009920222651,-5.6748155472725745) -- cycle;
\draw[line width=.5pt,dash pattern=on 2pt off 2pt,color=red] (19.55614454693857,-3.0583340327250244) -- (14.83789192612888,-0.7873952579008661) -- (11.336754208605427,-7.189751835409789) -- (2.7667692799832966,-3.3586813322258426) -- (11.67540662920314,-2.3404927622848524) -- (14.49083685985578,-5.756974173145009) -- cycle;
\draw [line width=0.8pt] (14.582064777327933,-3.266437246963563) circle (2.492207197164722);
\draw [line width=0.8pt] (9.456486842944459,-4.62198706617083) circle (3.1825809143703525);
\draw [line width=0.8pt] (17.6518727139722,-2.9253474762253115) circle (1.908909803536759);
\draw [line width=0.8pt] (4.8159524882725115,-3.5017879065025848) circle (2.0541741193812424);
\begin{scriptsize}
\draw [fill=white] (16.36,-1.52) circle (1.5pt);
\draw [fill=white] (16.7,-4.58) circle (1.5pt);
\draw [fill=white] (12.12,-2.88) circle (1.5pt);
\draw [fill=white] (12.632933366822195,-4.8194934140674945) circle (1.5pt);
\draw [fill=white] (6.78,-2.9) circle (1.5pt);
\draw [fill=white] (19.51181409493095,-2.4957491784679817) circle (1.5pt);
\draw [fill=red] (14.101887081770611,-0.8209258292251747) circle (1.5pt);
\draw [fill=red] (15.223009920222651,-5.6748155472725745) circle (1.5pt);
\draw [fill=red] (6.289169709683971,-4.9333123845582) circle (1.5pt);
\draw [fill=red] (10.911377440238992,-1.7914191679535207) circle (1.5pt);
\draw [fill=red] (12.00535646525744,-6.527790026858141) circle (1.5pt);
\draw [fill=red] (2.814473192994684,-3.9640777974907726) circle (1.5pt);
\draw [fill=red] (19.55614454693857,-3.0583340327250244) circle (1.5pt);
\draw [fill=red] (14.83789192612888,-0.7873952579008661) circle (1.5pt);
\draw [fill=red] (11.336754208605427,-7.189751835409789) circle (1.5pt);
\draw [fill=red] (2.7667692799832966,-3.3586813322258426) circle (1.5pt);
\draw [fill=red] (11.67540662920314,-2.3404927622848524) circle (1.5pt);
\draw [fill=red] (14.49083685985578,-5.756974173145009) circle (1.5pt);
\end{scriptsize}
\end{tikzpicture}
\caption{A closing configuration for an open chain of circles. The red
polygon closes in every position it it closes in one position.}\label{fig-open}
\end{center}
\end{figure}
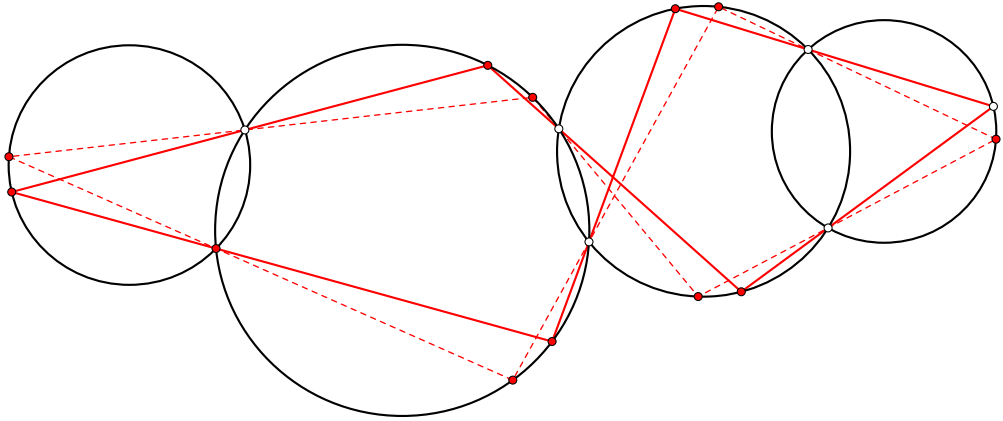

\section{Even more incidences}\label{sec-4}

Let us consider a chain $C_1\mathbin{\intersect} C_2 \mathbin{\intersect} C_3\mathbin{\intersect} \cdots $ of intersecting circles, not necessarily closed,
with centers $M_i$, and let $A_i$ be a common point of $C_i$ and $C_{i+1}$.
Take two starting points $X_1$ and $X_1'$ on $C_1$, and consider the resulting polygons $X_{i+1}=\varphi_{A_i} (X_i)$, and $X_{i+1}'=\varphi_{A_i} (X_i')$. 
By iterating the argument in the proof of Theorem~\ref{thm-1} we have
that $\sphericalangle X_1M_1X_1' = \sphericalangle X_iM_iX_i'$ for all $i$. An immediate consequence is Lemma~\ref{lem-lighthouse} below.

Let us first fix some notation that we will use throughout this section. In a situation like the one described above, we set:
\begin{itemize}
\item $\ell_j$ is the line through the points $X_j,A_j,X_{j+1}$, and $\ell_j'$ is the line through the points $X_j',A_j,X_{j+1}'$.  
\item $X_{ij}$ is the intersection of the lines $\ell_i$ and $\ell_j$, and $X_{ij}'$ is the intersection of the lines $\ell_i'$ and $\ell_j'$
\item For the circle $C_i$, we will also use the notation $C_{i-1,i}$. Similarly we will write  $X_{i-1,i}$ for the point $X_i$.
The reason for this notation will become clear below.
\end{itemize}
With these conventions we have the following.
%
\begin{lemma}\label{lem-lighthouse}
The intersection $X_{jk}$ of the lines $\ell_j$ and
$\ell_k$ lies on a fixed circle $C_{jk}$ through the points $A_j$ and $A_k$, no matter of the position of the initial point $X_1$.
The circles $C_{ij}, C_{jk},C_{ki}$ are concurrent in a point $P_{ijk}$ for all triples $i,j,k$.
\end{lemma}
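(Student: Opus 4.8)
The plan is to prove the two assertions separately. For the first I would extend the rotation argument behind Theorem~\ref{thm-1} to show that, as $X_1$ moves along $C_1$, \emph{all} the lines $\ell_m$ turn through one and the same angle; the constant mutual inclination of $\ell_j$ and $\ell_k$ then pins $X_{jk}$ to a circle through $A_j,A_k$. For the second I would recognise $C_{ij},C_{jk},C_{ki}$ as the three circles of a Miquel configuration attached to the triangle cut out by $\ell_i,\ell_j,\ell_k$, so that their concurrency is exactly Miquel's pivot theorem. Throughout I would use directed angles modulo $\pi$, written $\angle(\cdot,\cdot)$, to keep orientations correct and to sidestep case distinctions.

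The core of the first part is the identity $\angle(\ell_i,\ell_i')=\angle(\ell_{i+1},\ell_{i+1}')$ for every $i$. Indeed, the lines $\ell_i$ and $\ell_i'$ meet $C_{i+1}$ in the pairs $\{A_i,X_{i+1}\}$ and $\{A_i,X_{i+1}'\}$, whence $\angle(\ell_i,\ell_i')=\angle(A_iX_{i+1},A_iX_{i+1}')$; since both $A_i$ and $A_{i+1}$ lie on $C_{i+1}$ and subtend the same chord $X_{i+1}X_{i+1}'$, the inscribed angle theorem gives $\angle(A_iX_{i+1},A_iX_{i+1}')=\angle(A_{i+1}X_{i+1},A_{i+1}X_{i+1}')=\angle(\ell_{i+1},\ell_{i+1}')$. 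This is just the directed-angle form of the invariance $\sphericalangle X_iM_iX_i'=\sphericalangle X_1M_1X_1'$ already recorded before the lemma. Consequently $\angle(\ell_m,\ell_m')$ is independent of $m$, and by additivity of directed angles $\angle(\ell_j,\ell_k)=\angle(\ell_j',\ell_k')$, so the inclination of $\ell_j$ to $\ell_k$ is unchanged by the motion of $X_1$. Because $A_j\in\ell_j$, $A_k\in\ell_k$ and $X_{jk}=\ell_j\cap\ell_k$, we get $\angle(X_{jk}A_j,X_{jk}A_k)=\angle(\ell_j,\ell_k)$, a constant; by the converse of the inscribed angle theorem the locus of $X_{jk}$ is a fixed circle $C_{jk}$ through $A_j$ and $A_k$ (degenerating to the line $A_jA_k$ when that constant angle is $0$). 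This also justifies the convention $C_{i-1,i}=C_i$: for $k=j+1$ one has $X_{j,j+1}=X_{j+1}\in C_{j+1}$, and $C_{j+1}$ already passes through $A_j$ and $A_{j+1}$.

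For the concurrency I would fix one position of $X_1$ at which $\ell_i,\ell_j,\ell_k$ bound a genuine triangle with vertices $X_{ij},X_{jk},X_{ki}$; the points $A_i,A_j,A_k$ then lie on the sides $\ell_i,\ell_j,\ell_k$ respectively. The two sides through $X_{ij}$ are $\ell_i,\ell_j$, carrying $A_i,A_j$, so the circle through $X_{ij},A_i,A_j$ is precisely $C_{ij}$, and likewise at the other two vertices. Miquel's pivot theorem (equivalently, the same short directed-angle chase as above) asserts that these three circumcircles $C_{ij},C_{jk},C_{ki}$ share a common point. Since the three circles are fixed independently of $X_1$, this Miquel point is a well-defined fixed point $P_{ijk}$, identifiable as the second intersection of any two of the three circles.

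Once the uniform turning is established the rest is bookkeeping, so I expect the genuine crux to be precisely the index-independence $\angle(\ell_i,\ell_i')=\angle(\ell_{i+1},\ell_{i+1}')$: this is where directed angles are essential to get the turning \emph{signs} to agree across the whole chain and to absorb the degenerate positions (parallel lines, collinear vertices) without a separate case analysis.
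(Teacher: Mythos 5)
Your proposal is correct, and it splits from the paper halfway through. For the first assertion you and the paper run the same underlying argument: the invariance $\sphericalangle X_iM_iX_i'=\sphericalangle X_1M_1X_1'$ forces all lines $\ell_m$ to turn by the same angle, hence the angle at $X_{jk}$ subtended by $A_jA_k$ is constant and the locus is a circle through $A_j,A_k$. The only difference is presentational: the paper argues with unsigned angles and must distinguish two cases ($X_{jk}$ and $X_{jk}'$ on the same side, respectively opposite sides, of the line $A_jA_k$, using equal versus supplementary angles), while your directed angles modulo $\pi$ absorb both cases at once --- a genuine streamlining, at the cost of assuming the reader is fluent in that calculus. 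For the concurrency, however, your route is truly different. The paper stays inside its own machinery: it uses the freedom to move $X_1$ and specializes to the position where $X_{ij}$ lands on the second intersection point of $C_{ij}$ and $C_{jk}$; since $\ell_j$ meets $C_{jk}$ only at $A_j$ and one further point, this forces $X_{ij}=X_{jk}=X_{ki}$, and since $X_{ki}$ always lies on $C_{ki}$, that circle is dragged through the same point. This is self-contained, exploits the closing structure, and identifies $P_{ijk}$ explicitly as the second intersection of any two of the three circles. You instead freeze one generic position, recognize $C_{ij},C_{jk},C_{ki}$ as the three circles of a Miquel configuration on the triangle cut out by $\ell_i,\ell_j,\ell_k$, and quote the pivot theorem (or redo the angle chase). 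That is legitimate --- there is no circularity, since the paper's Miquel-type corollaries are derived from Theorem~\ref{thm-2}, not from this lemma --- and it makes the structural reason for concurrency more transparent; but it imports a classical result the paper deliberately avoids assuming, and it needs the mild genericity hypothesis that some position yields a proper triangle, just as the paper's argument tacitly needs $C_{ij}$ and $C_{jk}$ to meet in a second point. Both arguments share these unaddressed degenerate cases (tangency, parallelism), so neither is more rigorous on that score.
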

\begin{proof}
Let $X_1$ and $X_1'$ be two initial points on $C_1$ and consider the resulting polygons.
Consider the points $X_{jk}$  and $X_{jk}'$ in the notation introduced above.
We have to treat  two cases.

1.~case: $X_{jk}$ and $X_{jk}'$ lie on the same side of the line $A_jA_k$  (see Figure~\ref{fig-lighthouse} on the left).
Then we have
$\sphericalangle A_kX_{jk}A_j = \sphericalangle A_kX_{jk}'A_j$ (these angles are denoted by  $\sigma$ in Figure~\ref{fig-lighthouse})
and hence the points $A_k,X_{jk},X_{jk}',A_j$ are concyclic.

2.~case:  $X_{jk}$ and $X_{jk}$ lie on the opposite sides of the line $A_jA_k$  (see Figure~\ref{fig-lighthouse} on the right).
In this case the angles $\sigma=\sphericalangle A_kX_{jk}A_j$ and $\bar\sigma= \sphericalangle A_jX_{jk}'A_k$
are supplementary, and hence  the points $A_k,X_{jk},X_{jk}',A_j$ are again concyclic. This proves the first part of the theorem.

To show that the circles $C_{ij}, C_{jk},C_{ki}$ are concurrent, consider a situation where the lines
$\ell_i$ and $\ell_j$ meet in the intersection of $C_{ij}$ and $C_{jk}$ different from the common point $A_j$. In this case, the line $\ell_k$ passes
also through this point, i.e., $X_{ij}=X_{jk}=X_{ki}$. But his means that the circle $C_{ki}$ also
passes through that same point.
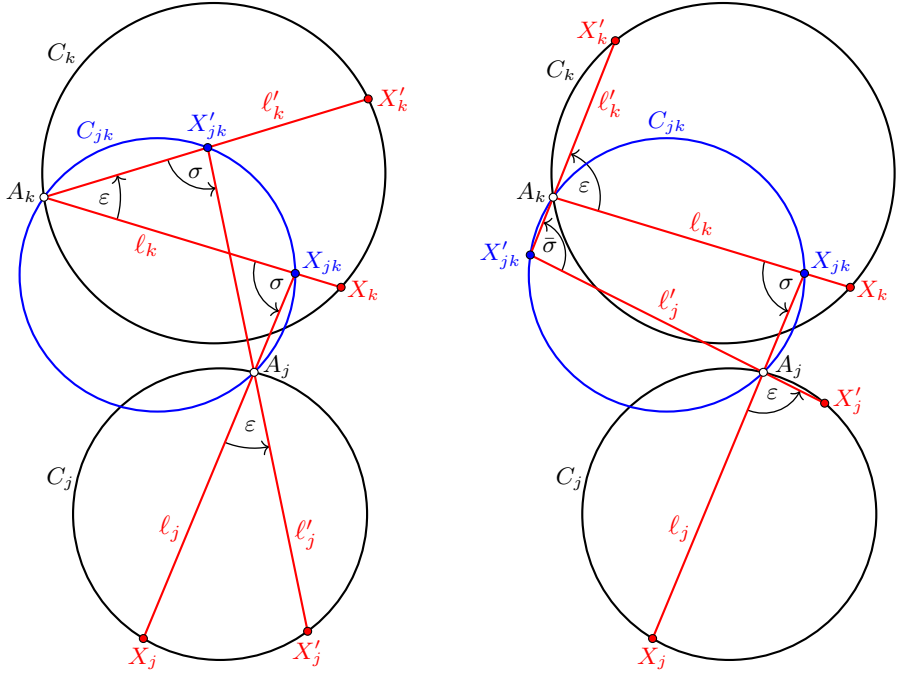
\begin{figure}
\begin{center}
\begin{tikzpicture}[line cap=round,line join=round,x=22,y=22]
\draw [shift={(2.441659895641564,-3.1759239727171233)},line width=.5pt,->]  (-112.42060651340253:1.3) arc (-112.42060651340253:-78.42060651340253:1.3);
\draw [shift={(-1.1327775679873493,-0.17349963388099823)},line width=.5pt,->]  (-16.99576407305493:1.3) arc (-16.99576407305493:17.004235926945068:1.3);
\draw [shift={(3.141421038690468,-1.4799077627000705)},line width=.5pt,->] (163.00423592694506:0.7088124356459637) arc (163.00423592694506:247.57939348659747:0.7088124356459637) ;
\draw [shift={(1.6519842095280375,0.6781126108629056)},line width=.5pt,->](-162.99576407305491:0.7088124356459637) arc (-162.99576407305491:-78.42060651340253:0.7088124356459637) ;
\draw [line width=0.8pt] (1.8624756636549666,-5.608497747060815) circle (2.5005738425822615);
\draw [line width=0.8pt] (1.7561537983080722,0.2392048470183444) circle (2.9182613707000145);
\draw [line width=0.8pt,color=blue] (0.7970042365593774,-1.5049880299093812) circle (2.34455095101767);
\draw [line width=0.8pt,color=red] (0.5578383495685777,-7.741756057931786)-- (3.141421038690468,-1.4799077627000705) node[pos=.3,left,xshift=1pt]{$\ell_j$};
\draw [line width=0.8pt,color=red] (1.6519842095280375,0.6781126108629056)-- (3.35167258724176,-7.617269029458651)node[pos=.8,right,xshift=-1pt]{$\ell_j'$};
\draw [line width=0.8pt,color=red] (-1.1327775679873493,-0.17349963388099823)-- (4.381779558075161,1.5129154881639009) node[pos=.7,above,xshift=1pt]{$\ell_k'$};
\draw [line width=0.8pt,color=red] (3.9206944616195587,-1.7180925612952203)-- (-1.1327775679873493,-0.17349963388099823) node[pos=.7,below,xshift=5pt]{$\ell_k$};
\begin{small}
\draw[color=black] (-0.85,-5) node {$C_j$};
\draw[color=black] (-0.8221514363541217,2.3) node {$C_k$};
\draw [fill=red] (0.5578383495685777,-7.741756057931786) circle (1.5pt) node[below,red] {$X_j$};
\draw [fill=red] (3.35167258724176,-7.617269029458651) circle (1.5pt) node[below,red] {$X_j'$};
\draw [fill=red] (3.9206944616195587,-1.7180925612952203) circle (1.5pt) node[right,red,xshift=-1pt,yshift=-1pt] {$X_k$};
\draw [fill=red] (4.381779558075161,1.5129154881639009) circle (1.5pt) node[right,red] {$X_k'$};
\draw [fill=white] (2.441659895641564,-3.1759239727171233) circle (1.5pt) node[right,yshift=2pt] {$A_j$};
\draw [fill=white] (-1.1327775679873493,-0.17349963388099823) circle (1.5pt)  node[left,yshift=2pt,xshift=.5pt] {$A_k$};
\draw [fill=blue] (3.141421038690468,-1.4799077627000705) circle (1.5pt) node[right,yshift=4pt,xshift=-.8pt,blue] {$X_{jk}$};
\draw[color=blue] (-0.25510148783735076,.94) node {$C_{jk}$};
\draw [fill=blue] (1.6519842095280375,0.6781126108629056) circle (1.5pt)  node[above,blue] {$X_{jk}'$};
\draw[color=black] (2.38,-4.1) node {$\varepsilon$};
\draw[color=black] (-.1,-0.2) node {$\varepsilon$};
\draw[color=black] (2.83,-1.66) node {$\sigma$};
\draw[color=black] (1.45,0.25) node {$\sigma$};
\end{small}
\end{tikzpicture}\qquad
\begin{tikzpicture}[line cap=round,line join=round,x=22,y=22]
\draw [shift={(2.441659895641564,-3.1759239727171233)},line width=.5pt,->] (-112.42060651340253:.7) arc (-112.42060651340253:-26.920606513402536:.7);
\draw [shift={(-1.1327775679873493,-0.17349963388099823)},line width=.5pt,->](-16.995764073054932:.8) arc (-16.995764073054932:68.50423592694507:.8);
\draw [shift={(3.1414210386904675,-1.4799077627000703)},line width=.5pt,->] (163.00423592694506:0.7088124356459637) arc (163.00423592694506:247.57939348659747:0.7088124356459637);
\draw [shift={(-1.5224723225847925,-1.1630119291383214)},line width=.5pt,->] (-26.920606513402536:0.6) arc (-26.920606513402536:68.50423592694507:0.6);
\draw [line width=0.8pt] (1.8624756636549666,-5.608497747060815) circle (2.5005738425822615);
\draw [line width=0.8pt] (1.7561537983080722,0.2392048470183444) circle (2.9182613707000145);
\draw [line width=0.8pt,color=blue] (0.7970042365593774,-1.504988029909382) circle (2.3445509510176694);
\draw [line width=0.8pt,color=red] (0.5578383495685777,-7.741756057931786)-- (3.1414210386904675,-1.4799077627000703)node[pos=.3,left,xshift=1pt]{$\ell_j$};
\draw [line width=0.8pt,color=red] (-1.5224723225847925,-1.1630119291383214)-- (3.484765850137177,-3.705593626236507)node[pos=.5,above,xshift=-3pt]{$\ell_j'$};
\draw [line width=0.8pt,color=red] (3.9206944616195587,-1.7180925612952205)-- (-1.1327775679873493,-0.17349963388099823)node[pos=.5,above,yshift=-1pt]{$\ell_k$};

\draw [line width=0.8pt,color=red] (-1.1327775679873493,-0.17349963388099823)-- (-0.07554900459458547,2.5110134370096917)node[pos=.6,right,xshift=-1pt]{$\ell_k'$};

\draw [line width=0.8pt,color=red] (-1.1327775679873493,-0.17349963388099823)-- (-1.5224723225847925,-1.1630119291383214);
\begin{small}
\draw[color=black] (-0.85,-5) node {$C_j$};
\draw[color=black] (-1,2.) node {$C_k$};
\draw [fill=red] (0.5578383495685777,-7.741756057931786) circle (1.5pt) node[red,below] {$X_j$};
\draw [fill=red] (3.484765850137177,-3.705593626236507) circle (1.5pt) node[red,right,yshift=1pt,xshift=-.2pt] {$X_j'$};
\draw [fill=red] (3.9206944616195587,-1.7180925612952205) circle (1.5pt) node[right,red,xshift=-1pt,yshift=-1pt] {$X_k$};
\draw [fill=red] (-0.07554900459458547,2.5110134370096917) circle (1.5pt)  node[left,red,yshift=3.5pt,xshift=1pt] {$X_k'$};
\draw [fill=white] (2.441659895641564,-3.1759239727171233) circle (1.5pt) node[right,yshift=2pt,xshift=1pt] {$A_j$};
\draw [fill=white] (-1.1327775679873493,-0.17349963388099823) circle (1.5pt) node[left,yshift=2pt,xshift=1pt] {$A_k$};
\draw [fill=blue] (3.1414210386904675,-1.4799077627000703) circle (1.5pt) node[blue,right,yshift=4pt,xshift=-1pt] {$X_{jk}$};
\draw[color=blue] (0.8081171656315947,1.1) node {$C_{jk}$};
\draw [fill=blue] (-1.5224723225847925,-1.1630119291383214) circle (1.5pt) node[blue,left] {$X_{jk}'$};
\draw[color=black] (2.6,-3.55) node {$\epsilon$};
\draw[color=black] (-.6,0) node {$\varepsilon$};
\draw[color=black] (2.83,-1.66) node {$\sigma$};
\draw[color=black] (-1.19,-1) node {$\bar\sigma$};
\end{small}
\end{tikzpicture}
\caption{Proof of Lemma~\ref{lem-lighthouse}.}\label{fig-lighthouse}
\end{center}
\end{figure}
\end{proof}
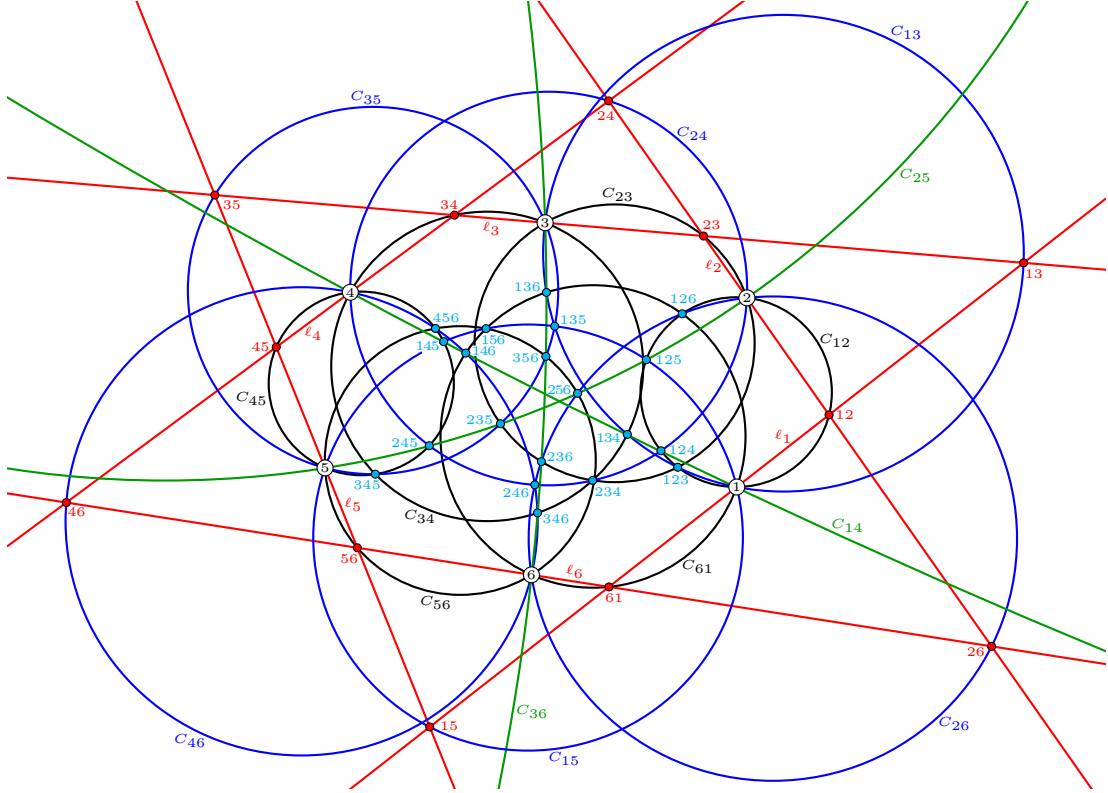
\begin{figure}[h]
\begin{center}
\begin{tikzpicture}[line cap=round,line join=round,x=20,y=20]
\clip(-5.721384969695936,-10.85148479995618) rectangle (14.823636225390633,3.997313413409888);
\draw [line width=0.8pt] (5.2343836979732234,-4.216765581031073) circle (2.8533498052877553);
\draw [line width=0.8pt] (7.913073410720727,-3.380525028916483) circle (1.7899157239613572);
\draw [line width=0.8pt] (5.638569493687873,-2.459318422200651) circle (2.619590905567516);
\draw [line width=0.8pt] (3.2552543110113064,-2.8950912080749482) circle (2.9186202046961514);
\draw [line width=0.8pt] (0.8987816300464728,-3.2160810873825163) circle (1.7319492459432346);
\draw [line width=0.8pt] (2.7515590257157183,-4.670668806167989) circle (2.5341879220389436);
\draw [line width=0.8pt,color=red,domain=-5.721384969695936:14.823636225390633] plot(\x,{(-20.156750503569697-0.5104259373002638*\x)/3.257611550610114});
\draw [line width=0.8pt,color=red,domain=-5.721384969695936:14.823636225390633] plot(\x,{(-27.291154880382855--1.8840729112975536*\x)/2.3909934548747076});
\draw [line width=0.8pt,color=red,domain=-5.721384969695936:14.823636225390633] plot(\x,{(-15.449204263691401--2.2074928382393617*\x)/-1.5369809229560634});
\draw [line width=0.8pt,color=red,domain=-5.721384969695936:14.823636225390633] plot(\x,{(-0.5380251939993448--0.25035176579578267*\x)/-2.9629857357704594});
\draw [line width=0.8pt,color=red,domain=-5.721384969695936:14.823636225390633] plot(\x,{(--3.9242849284810104-1.4546797133473137*\x)/-1.9482694431916945});
\draw [line width=0.8pt,color=red,domain=-5.721384969695936:14.823636225390633] plot(\x,{(-3.887490856162111-2.2791928235632306*\x)/0.9130287396055088});
\draw [line width=0.8pt,color=blue] (8.5995010386411,-6.144401111407776) circle (4.568378945620915);
\draw [line width=0.8pt,color=blue] (8.797967868268577,-0.7600127485953754) circle (4.496223548469916);
\draw [line width=0.8pt,color=blue] (4.405512322794316,-1.4260943572096305) circle (3.7125211131550135);
\draw [line width=0.8pt,color=blue] (1.1188778098495122,-1.463848390156992) circle (3.464457075921461);
\draw [line width=0.8pt,color=blue] (-0.21742725375315797,-5.818048197406327) circle (4.416717045249353);

\draw [line width=.8pt,color=darkgreen] (74.89907586583134,135.605) circle (155.8701990232868);
\draw [line width=.8pt,color=darkgreen] (-2.770866288532993,13.864229000653532) circle (18.912398736231296);
\draw [line width=.8pt,color=darkgreen] (-42.40117763953322,-1.7081179754974127) circle (46.76465179064406);
\draw [line width=0.8pt,color=blue] (4.019365300101478,-6.124184617168178) circle (4.019097913285547);
\begin{tiny}
\draw[color=darkgreen] (11.27,.7) node {$C_{25}$};
\draw[color=darkgreen] (4.1,-9.4) node {$C_{36}$};
\draw[color=darkgreen] (10,-5.9) node {$C_{14}$};

\draw [fill=white] (4.08244,-6.8272) circle (3pt) node[xshift=0pt,yshift=0pt] {$6$};
\draw [fill=white] (7.923652820066051,-5.17040948747296) circle (3pt) node {$1$};
\draw[color=black] (7.2,-6.67) node {$C_{61}$};
\draw [fill=white] (8.113868117764344,-1.601907648896996) circle (3pt)  node {$2$};
\draw[color=black] (9.77,-2.4) node {$C_{12}$};
\draw [fill=white] (4.338663863702232,-0.18500492893203996) circle (3pt) node {$3$};
\draw[color=black] (5.7,.34) node {$C_{23}$};
\draw [fill=white] (0.6936554964100641,-1.496321958951236) circle (3pt) node {$4$};
\draw[color=black] (2,-5.73) node {$C_{34}$};
\draw [fill=white] (0.22120354498194006,-4.809987022236773) circle (3pt) node {$5$};
\draw[color=black] (-1.15,-3.5) node {$C_{45}$};
\draw[color=black] (2.3,-7.35) node {$C_{56}$};
\draw [fill=red] (0.824832878578428,-6.316826386154046) circle (1.5pt) node[red,below,xshift=-3pt] {${56}$};
\draw[color=red] (4.9,-6.77) node {$\ell_6$};
\draw [fill=red] (5.532,-7.05448) circle (1.5pt) node[red,below,xshift=2pt] {${61}$};
\draw[color=red] (8.8,-4.22) node {$\ell_1$};
\draw [fill=red] (9.650849040720408,-3.809400487136358) circle (1.5pt)  node[red,right] {${12}$};
\draw[color=red] (7.5,-1) node {$\ell_2$};
\draw [fill=red] (7.301649599472691,-0.4353566947278226) circle (1.5pt) node[red,above,xshift=3pt,yshift=0pt] {${23}$};
\draw[color=red] (3.34,-.3) node {$\ell_3$};
\draw [fill=red] (2.6419249396017586,-0.041642245603922295) circle (1.5pt) node[red,above,xshift=-2pt] {${34}$};
\draw[color=red] (0,-2.27) node {$\ell_4$};
\draw [fill=red] (-0.6918251946235687,-2.5307941986735423) circle (1.5pt) node[red,left] {${45}$};
\draw[color=red] (.75,-5.5) node {$\ell_5$};
\draw [fill=red] (12.691216753439965,-8.176136502658435) circle (1.5pt) node[red,anchor=north east,yshift=2pt] {${26}$};
\draw[color=blue] (12,-9.63) node {$C_{26}$};
\draw [fill=red] (13.290532130522244,-0.9413757749721982) circle (1.5pt) node[red,right,yshift=-4pt,xshift=-2pt] {${13}$};
\draw[color=blue] (11.1,3.4) node {$C_{13}$};
\draw [fill=red] (5.5274502858670616,2.1128416725368693) circle (1.5pt) node[red,below,xshift=-1pt,yshift=-1.5pt] {${24}$};
\draw[color=blue] (7.1,1.5) node {$C_{24}$};
\draw [fill=red] (-1.8406865482116122,0.3371073675995738) circle (1.5pt) node[red,right,yshift=-3.5pt,xshift=.5pt] {${35}$};
\draw[color=blue] (1,2.18) node {$C_{35}$};
\draw [fill=red] (-4.619907313347611,-5.46370544995778) circle (1.5pt) node[red,right,yshift=-3.5pt,xshift=-2pt] {${46}$};
\draw[color=blue] (-2.3,-9.96) node {$C_{46}$};
\draw [fill=red] (2.1789669456332477,-9.697150042896066) circle (1.5pt) node[red,right,yshift=.5pt,xshift=1.4pt] {${15}$};
\draw [fill=red] (227.40652739148305,167.77934812505669) circle (1.5pt) node {${14}$};
\draw [fill=red] (-88.22283031116076,7.635800021837454) circle (1.5pt) node {${36}$};
\draw [fill=red] (-13.498891967864289,29.439475097643832) circle (1.5pt) node {${25}$};
\draw[color=darkgreen] (-2.8925827092822307,267.87725669541345) node {$C_{14}$};
\draw[color=darkgreen] (-12.06446717137445,29.755826829054023) node {$C_{25}$};
\draw[color=darkgreen] (-65.628272429993,37.86570361658815) node {$C_{36}$};
\draw [fill=cyan] (6.819573490766613,-4.797584034019824) circle (1.5pt) node[cyan,below,yshift=-.8pt,xshift=-.7] {${123}$};
\draw [fill=cyan] (6.506214695043864,-4.4871174800963) circle (1.5pt) node[cyan,right,xshift=.4pt,yshift=.4pt] {${124}$};
\draw[color=blue] (4.7,-10.3) node {$C_{15}$};
\draw [fill=cyan] (6.231087825784875,-2.7683789753049806) circle (1.5pt) node[cyan,right,xshift=.9pt] {${125}$};
\draw [fill=cyan] (6.903473509835637,-1.902517471758229) circle (1.5pt) node[cyan,above,yshift=1.5pt,xshift=.5pt] {${126}$};
\draw [fill=cyan] (4.3630098250761025,-1.4997244173811157) circle (1.5pt) node[cyan,left,xshift=.5pt,yshift=2pt] {${136}$};
\draw [fill=cyan] (4.517491385461691,-2.136074984816033) circle (1.5pt) node[cyan,right,xshift=-.5pt,yshift=2.3pt] {${135}$};

\draw [fill=white,white] (2.2,-2.55) circle (3.5pt);
\draw [fill=cyan] (2.4410295696915525,-2.4279697480504097) circle (1.5pt) node[cyan,below,xshift=-6pt,yshift=2.4pt] {${1\hspace{-.5pt}4\hspace{-.5pt}5}$};

\draw [fill=white,white] (3.04,-2.6) circle (1.8pt);
\draw [fill=cyan] (2.853716574415545,-2.643860892835695) circle (1.5pt) node[cyan,right,xshift=-.2pt,yshift=.9pt] {${1\hspace{-.5pt}4\hspace{-.5pt}6}$};
\draw [fill=cyan] (3.233339804868106,-2.1826986083867976) circle (1.5pt) node[cyan,below,xshift=2.7pt,yshift=-.0pt] {${1\hspace{-.5pt}5\hspace{-.5pt}6}$};
\draw [fill=cyan] (5.2275480582817515,-5.046463159583685) circle (1.5pt) node[cyan,below,xshift=5.8pt,yshift=0pt] {${234}$};
\draw [fill=cyan] (3.503456410635783,-3.9770630104340725) circle (1.5pt) node[cyan,left,xshift=-.2pt,yshift=1.4pt] {${235}$};
\draw [fill=cyan] (4.268189638946721,-4.691876498993202) circle (1.5pt) node[cyan,right,xshift=-.5pt,yshift=2pt] {${236}$};
\draw [fill=cyan] (2.171238654523535,-4.39102845274929) circle (1.5pt) node[cyan,left,xshift=-1.6pt,yshift=1pt] {${245}$};
\draw [fill=cyan] (4.945541404441234,-3.4023826609901975) circle (1.5pt) node[cyan,left,xshift=.3pt,yshift=1.4pt] {${2\hspace{-.5pt}5\hspace{-.5pt}6}$};
\draw [fill=cyan] (4.145286225917211,-5.129484073959121) circle (1.5pt) node[cyan,below,xshift=-7pt,yshift=1.2pt] {${246}$};
\draw [fill=cyan] (1.161112362327333,-4.928048019722136) circle (1.5pt)  node[cyan,below,xshift=-3pt,yshift=0pt] {${345}$};
\draw [fill=cyan] (4.3528163462496225,-2.706468353656932) circle (1.5pt) node[cyan,left,xshift=0pt,yshift=-.5pt] {${356}$};
\draw [fill=cyan] (2.2863180049051,-2.179552747698783) circle (1.5pt) node[cyan,above,xshift=4pt,yshift=0pt] {${456}$};
\draw [fill=cyan] (5.876789807894728,-4.178015811409375) circle (1.5pt) node[cyan,left,xshift=0pt,yshift=-1.3pt] {${1\hspace{-.5pt}3\hspace{-.5pt}4}$};
\draw [fill=cyan] (4.196382165765799,-5.657811291663913) circle (1.5pt) node[cyan,below,xshift=7pt,yshift=2pt] {${346}$};
\end{tiny}
\end{tikzpicture}
\caption{Illustration of Lemma~\ref{lem-lighthouse}.}\label{fig-ijk}
\end{center}
\end{figure}
Figure~\ref{fig-ijk} illustrates Lemma~\ref{lem-lighthouse} applied to a closed chain 
$C_1\mathbin{\intersect} C_2 \mathbin{\intersect} C_3\mathbin{\intersect} \cdots \mathbin{\intersect}C_6\mathbin{\intersect}C_1$
with intersection points $A_1,\ldots,A_6$ such that $\varphi_{A_6}\circ\ldots\varphi_{A_1}$ is the identity on $C_1$.
The pivots $A_1,\ldots,A_6$ are represented only as black numbers in small circles. 
It is now convenient to use the alternative notation $C_{i-1,i}$ for the circle $C_i$, and  $X_{i-1,i}$ for the point $X_i$,
since this is more coherent with the combinatorics of the situation. Observe that this convention is compatible 
with the general notation $X_{ij}$ for the intersection of the lines $\ell_i$ and $\ell_j$. Also note that 
$C_{i-1,i}$  is the circumcircle of the triangle $A_{i-1},A_i$ and $X_{i-1,i}$.
%
In Figure~\ref{fig-ijk} we drop the letter $X$ in $X_{ij}$ and just write red indices $ij$.
The common point $P_{ijk}$ of the circles $C_{ij},C_{jk}$ and $C_{ki}$ is 
indicated by light blue indices $ijk$ in the figure. Then we have:
\begin{itemize}
\item The polygon $X_{61}X_{12}X_{23}X_{34} X_{45}X_{56}X_{61}$ closes
 for every position of the starting point $X_{61}$
on the circle $C_{61}$.
\item The intersection $X_{ij}$ of the lines $\ell_i$ and $\ell_j$ lies on the circle $C_{ij}$
 for every position of the starting point $X_{61}$
on the circle $C_{61}$.
\item For all triples $i,j,k$ the circles $C_{ij},C_{jk}, C_{ki}$ meet in a common point.
\end{itemize}
Note that the points $X_{14},X_{25}$ and $X_{36}$ lie outside the region of the figure on the green circles $C_{14},C_{25}$ and $C_{36}$, respectively.

Now let us have a look at the special case of three touching circles.
\begin{corollary}\label{cor-three-touching}
Let  $C_1\mathbin{\touch} C_2 \mathbin{\touch} C_3\mathbin{\touch} C_1$ be a closed chain of three touching circles, the 
contact points being $A_1=A_4,A_2=A_5,A_3=A_6$ (see Figure~\ref{fig-three-touching}).
Then the polygon $X_1X_2\ldots X_6X_1$ with vertices $X_{i+1}  = \varphi_{A_i}(X_i)$ closes for any staring point $X_1$ on $C_1$.
Then, $A_1=X_{14}$, $A_2=X_{25}$, $A_3=X_{36}$. The points $X_{135}:=X_{13}=X_{35}=X_{51}$ and $X_{246}:=X_{24}=X_{46}=X_{62}$ lie
on the circumcircle $C$ of the triangle $A_1A_2A_3$. The lines $\ell_i$ and $\ell_{i+3}$ are orthogonal, 
and the midpoint of the segment $X_{135}X_{246}$ is the center of this circle $C$.
\end{corollary}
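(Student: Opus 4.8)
The plan is to use that for two circles tangent at a point $A$ the map $\varphi_A\colon C_i\to C_{i+1}$ is the restriction of the homothety $h_i$ centred at $A$ carrying $C_i$ onto $C_{i+1}$: a line through $A$ meets $C_i$ in $X$ and $C_{i+1}$ in $h_i(X)$, so $\varphi_{A_i}=h_i|_{C_i}$. Writing $R_i$ for the radius of $C_i$, external tangency gives $h_i$ the negative ratio $-R_{i+1}/R_i$. Hence the three-step map $g:=h_3\circ h_2\circ h_1$ is again a homothety, of ratio $(-R_2/R_1)(-R_3/R_2)(-R_1/R_3)=-1$, and it maps $C_1$ onto itself; a homothety of ratio $-1$ fixing a circle is the point reflection in its centre, so $g$ is the reflection in $M_1$. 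Therefore $X_4=g(X_1)=2M_1-X_1$, i.e. $X_1X_4$ is a diameter of $C_1$. Applying the same reasoning to the cyclic shifts $h_1\circ h_3\circ h_2$ and $h_2\circ h_1\circ h_3$ yields $X_5=2M_2-X_2$ and $X_6=2M_3-X_3$, so $X_2X_5$ and $X_3X_6$ are diameters of $C_2$ and $C_3$. Since $g^2=\mathrm{id}$ the hexagon closes ($X_7=X_1$), which is the first assertion and also follows directly from Theorem~\ref{thm-touching} with the odd value $n=3$.

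Next I would read off the orthogonality. For $i=1,2,3$ the lines $\ell_i$ and $\ell_{i+3}$ both pass through the common pivot $A_i=A_{i+3}$, and along them lie the endpoints $X_i,X_{i+3}$ of a diameter of $C_i$. By Thales' theorem $\sphericalangle X_iA_iX_{i+3}=\tfrac{\pi}{2}$, hence $\ell_i\perp\ell_{i+3}$, which is the fourth assertion. In particular the two lines through $A_i$ are distinct, so their intersection is the pivot itself: $X_{i,i+3}=\ell_i\cap\ell_{i+3}=A_i$, giving $A_1=X_{14}$, $A_2=X_{25}$, $A_3=X_{36}$.

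The heart of the matter is the concurrency. By Lemma~\ref{lem-lighthouse} the point $X_{13}=\ell_1\cap\ell_3$ runs, as $X_1$ varies, on a fixed circle $C_{13}$ through $A_1$ and $A_3$. I would pin this circle down with one extra position: choose $X_1$ so that $X_2=A_2$. Then $X_3=\varphi_{A_2}(A_2)=A_2$, since the pivot is the fixed point of $h_2$; consequently $\ell_1$ is the line $A_1A_2$ and $\ell_3$ is the line $A_2A_3$, so $X_{13}=A_2$. Thus $A_2\in C_{13}$ and $C_{13}$ is the circumcircle $C$ of $A_1A_2A_3$. Because the whole construction is invariant under advancing the starting circle by one (which shifts every line index by one and leaves $C$ fixed), the same computation gives $C_{jk}=C$ for all six consecutive-by-two index pairs; in particular $C_{13}=C_{35}=C_{51}=C$ and $C_{24}=C_{46}=C_{62}=C$. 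Now $X_{13}$ and $X_{51}$ both lie on $\ell_1$ and on $C$, while $A_1$ is the other point of $\ell_1\cap C$; as neither equals $A_1$, both coincide with the second intersection of $\ell_1$ with $C$, and the analogous remark on $\ell_3$ forces $X_{35}$ to join them. Hence $\ell_1,\ell_3,\ell_5$ meet in a single point $X_{135}\in C$, and likewise $\ell_2,\ell_4,\ell_6$ meet in $X_{246}\in C$.

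Finally the diameter claim drops out of the previous two steps: $X_{135}\in\ell_1$ and $X_{246}\in\ell_4$ both lie on $C$ together with $A_1$, and $\ell_1\perp\ell_4$ at $A_1$ gives $\sphericalangle X_{135}A_1X_{246}=\tfrac{\pi}{2}$. An inscribed right angle subtends a diameter, so $X_{135}X_{246}$ is a diameter of $C$ and its midpoint is the centre of $C$. I expect the only genuinely delicate point to be the identification $C_{13}=C$ and the ensuing concurrency: everything else is Thales applied to the three diameters $X_iX_{i+3}$, whereas here one must invoke Lemma~\ref{lem-lighthouse} to know $X_{13}$ lies on a fixed circle, then use the special position $X_2=A_2$ and the ``second intersection'' bookkeeping to collapse the three candidate points into one. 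Degenerate positions, where a pivot coincides with one of the $X_{jk}$, should be excluded or absorbed by continuity.
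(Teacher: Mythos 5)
Your proof is correct, and it reaches the paper's conclusions by a partially different route. The central, delicate step is the same as in the paper: you invoke Lemma~\ref{lem-lighthouse} to know that $X_{13}$ runs on a fixed circle $C_{13}$ through $A_1,A_3$, pin that circle down with a degenerate position (you take $X_2=A_2$, the paper takes $X_1=A_1$ and looks at $C_{26}$), conclude $C_{13}=C_{35}=C_{51}=C_{24}=C_{46}=C_{62}=C$, and then collapse the intersection points by the ``second intersection of $\ell_j$ with $C$'' bookkeeping --- which you spell out more explicitly than the paper does. Where you genuinely diverge is in the supporting claims. For the closing and the diameter property $X_{i+3}=2M_i-X_i$ you observe that for tangent circles $\varphi_{A_i}$ is the restriction of a homothety, so the three-step map is a homothety of ratio $-1$, i.e.\ the point reflection in $M_i$; the paper instead gets this from the transfer-angle machinery (each $\mu_{A_i}=\pi$, so the sum is $\pi$ mod $2\pi$). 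For the orthogonality $\ell_i\perp\ell_{i+3}$ you apply Thales directly to the diameter $X_iX_{i+3}$ and the point $A_i\in C_i$, which is cleaner than the paper's argument via a special position plus invariance of the angle between the lines; it also hands you $X_{i,i+3}=A_i$ and, combined with the inscribed right angle at $A_1$ in $C$, the diameter/midpoint claim, both of which the paper leaves implicit. Two small caveats: your homothety ratio $-R_{i+1}/R_i$ assumes external tangency; for a chain with internal tangencies (two circles inside a third) two of the ratios are positive, but since the number of internal tangencies among three mutually tangent circles is even the product is still $-1$, so your argument survives --- it would be worth saying so. And the degenerate positions (e.g.\ $X_{13}=A_1$) do need the continuity remark you make at the end; with that, the proof is complete.
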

\begin{proof}
Consider, e.g., the circle $C_{26}$ through the points $A_2,A_6=A_3,X_{26}$. Observe that $X_{26}=A_1$ if $X_1=A_1$.
Hence $C_{26}$ is the circumcircle $C$ of the triangle $A_1A_2A_3$. Similarly $C_{24}=C_{46}=C_{13}=C_{35}=C_{51}=C$.
 It follows that $X_{13}=X_{35}=X_{51}$ and $X_{24}=X_{46}=X_{62}$ lie on $C$.
 
Finally, the sum of the transfer angles of $\varphi_{A_3}\circ \varphi_{A_2}\circ \varphi_{A_1}$ is $\pi$, i.e.,
the $X_iX_{i+3}$ is a diameter of the circle $C_i$. In particular, in the position where the line $X_1X_2$ passes through the centers of $C_1$ and $C_2$
we have that $\sphericalangle  A_1A_2X_2$ is a right angle. Hence $\ell_2$ and $\ell_5$ are orthogonal, independent of the starting point $X_1$.
Similarly, we have that $\ell_1$ and $\ell_4$, and   $\ell_3$ and $\ell_6$ are orthogonal.
In particular, it follows that the center $M$ of $C$ is the midpoint of the segment $X_{135}X_{246}$.
\end{proof}
\begin{figure}[h]
\begin{center}
\begin{tikzpicture}[line cap=round,line join=round,x=55,y=55]
\draw [line width=0.8pt] (6.64541961568973,1.7541746894948558) circle (1.4010536237440885);
\draw [line width=0.8pt] (9.352344893477351,1.3916400540768692) circle (1.3300406297380651);
\draw [line width=0.8pt] (8.31616702253702,3.4786054960532615) circle (1.);
\draw [line width=0.8pt,color=blue] (8.127885701741155,2.2686508331400055) circle (0.7067107909648517);
\draw [line width=0.4pt,dotted,blue] (7.424077226081841,2.332633421836285)-- (8.831694177400482,2.2046682444437193);
\draw [line width=.8pt,color=red] (6.518574180618487,0.3588749037111594)-- (9.472761114157564,2.71621848155905);
\draw [line width=.8pt,color=red] (9.472761114157564,2.71621848155905)-- (8.22563127649453,2.482712289585555) ;
\draw [line width=.4pt,color=red] (8.22563127649453,2.482712289585555) -- (7.424077226081841,2.332633421836285);

\draw [line width=.8pt,color=red] (7.424077226081841,2.332633421836285)-- (9.231928672797187,0.06706162659469903);
\draw [line width=.8pt,color=red] (9.231928672797187,0.06706162659469903)-- (8.406702768579516,4.474498702520968);
\draw [line width=.8pt,color=red] (8.406702768579516,4.474498702520968)-- (6.518574180618487,0.3588749037111594);
\draw [line width=.8pt,color=red] (6.772265050760934,3.149474475278564)-- (8.22563127649453,2.482712289585555);
\draw [line width=.4pt,color=red] (8.22563127649453,2.482712289585555)-- (8.831694177400482,2.2046682444437193);

\draw [line width=0.8pt,color=red] (6.772265050760934,3.149474475278564)-- (7.424077226081841,2.332633421836285);
\begin{small}
\draw [fill=white,white] (8.135,1.45) circle (4.7pt);
\draw [fill=white] (8.034074534798915,1.5681941199713034) circle (1.5pt) node[right,xshift=-3.pt,yshift=-6pt] {$A_{\hspace{-1pt}1}\hspace{-4pt}=\hspace{-3pt}\textcolor{red}{X_{\!1\hspace{-1pt}4}}$};
\draw[color=black] (5.6,2.86) node {$C_1$};
\draw[color=black] (10.66,2) node {$C_2$};
\draw[color=black] (7.52,4.273171845038602) node {$C_3$};

\draw [fill=white,white] (7.1,2.74) circle (6pt) ;
\draw [fill=white] (7.62032775,2.7604079) circle (1.5pt)  node[left,yshift=-.8pt] {$\textcolor{red}{X_{\hspace{-1pt}36}}\hspace{-3.5pt}=\hspace{-3.5pt}A_3$};

\draw [fill=white,white] (9.21,2.5) circle (6pt) ;
\draw [fill=white] (8.76087,2.5829) circle (1.5pt) node[below,xshift=19pt,yshift=2.6pt] {$A_2\hspace{-3.5pt}=\hspace{-3.5pt}\textcolor{red}{X_{\hspace{-1pt}25}}$};
\draw [fill=red] (6.518574180618487,0.3588749037111594) circle (1.5pt) node[red,below] {$X_1$};
\draw [fill=red] (9.472761114157564,2.71621848155905) circle (1.5pt) node[red,above] {$X_2$};
\draw [fill=red] (8.22563127649453,2.482712289585555) circle (1.5pt) node[red,above] {$X_3$};
\draw [fill=red] (6.772265050760934,3.149474475278564) circle (1.5pt)  node[above,red] {$X_4$};
\draw [fill=red] (9.231928672797187,0.06706162659469903) circle (1.5pt)  node[red,below] {$X_5$};
\draw [fill=red] (8.406702768579516,4.474498702520968) circle (1.5pt) node[red,above] {$X_6$};
\draw [fill=blue] (7.424077226081841,2.332633421836285) circle (1.5pt)  node[left,red] {$X_{246}$};
\draw [fill=blue] (8.831694177400482,2.2046682444437193) circle (1.5pt) node[right,red,yshift=-2pt,xshift=-.5pt] {$X_{135}$};
\draw[color=blue] (8.041155472308168,3.054075916141289) node {$C$};
\draw [fill=blue] (8.127885701741155,2.2686508331400055) circle (1.5pt) node[below,blue] {$M$};
\draw[color=blue] (7.4,1.2) node[red] {$\ell_1$};
\draw[color=red] (8.5,2.63) node {$\ell_2$};
\draw[color=red] (9.13,1.2) node {$\ell_5$};
\draw[color=red] (7.9,3.6) node {$\ell_6$};
\draw[color=red] (7.94,2.7) node {$\ell_3$};
\draw[color=red] (8.4,1) node {$\ell_4$};
\end{small}
\end{tikzpicture}
\caption{The special case of three touching circles in Corollary~\ref{cor-three-touching}.}\label{fig-three-touching}
\end{center}
\end{figure}
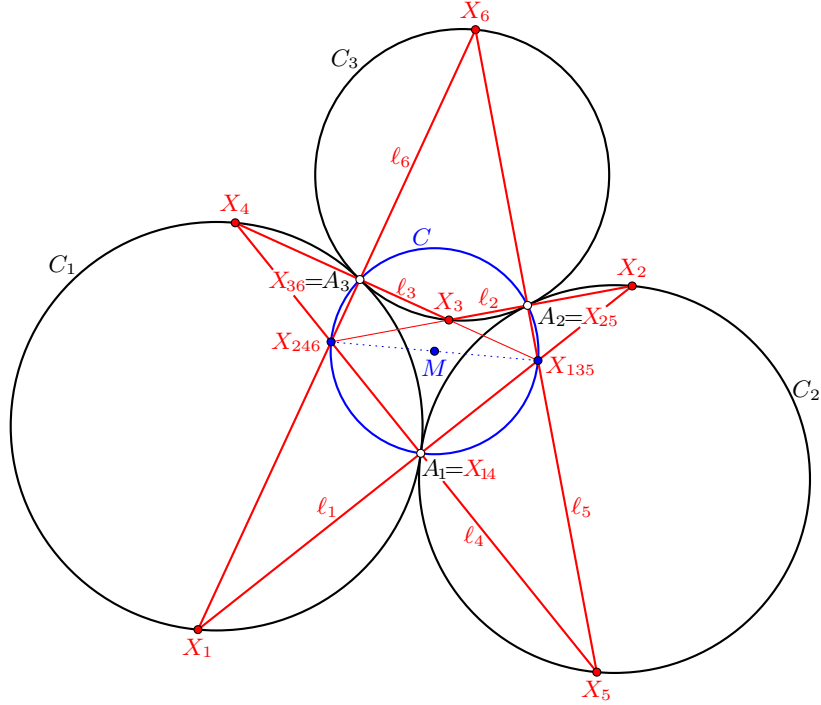

The situation of four touching circles is simpler than for three touching circles, since four is an even number (recall Theorem~\ref{thm-touching}).
\begin{corollary}\label{cor-four-touching}
Let  $C_1\mathbin{\touch} C_2 \mathbin{\touch} C_3\mathbin{\touch} C_4\mathbin{\touch} C_1$ be a closed chain of three touching circles, the 
contact points being $A_1,A_2,A_3,A_4$ (see Figure~\ref{fig-four-touching}).
Then the polygon $X_1X_2\ldots X_4X_1$ with vertices $X_{i+1}  = \varphi_{A_i}(X_i)$ closes for any staring point $X_1$ on $C_1$.
The points $X_{13}$ and $X_{24}$ lie on the circumcircle $C$ of the quadrilateral $A_1A_2A_3A_4$.
\end{corollary}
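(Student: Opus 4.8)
The closing statement---that the polygon $X_1X_2X_3X_4X_1$ with $X_{i+1}=\varphi_{A_i}(X_i)$ closes for every starting point---is immediate from Theorem~\ref{thm-touching}: since all circles touch and $n=4$ is even, the composition $\varphi_{A_4}\circ\varphi_{A_3}\circ\varphi_{A_2}\circ\varphi_{A_1}$ is the identity on $C_1$. The real content is the concyclicity, and for that the plan is to pin down the fixed circle $C_{13}$ supplied by Lemma~\ref{lem-lighthouse} by evaluating the intersection point $X_{13}=\ell_1\cap\ell_3$ in two cleverly chosen positions of the starting point, and then to treat $C_{24}$ the same way.

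The one elementary tool I will use repeatedly is the \emph{tangent collapse}: if $C_i\mathbin{\touch}C_{i+1}$ at $A_i$, then $\varphi_{A_i}(A_i)=A_i$, because the tangent to $C_i$ at $A_i$ coincides with the tangent to $C_{i+1}$ at $A_i$ and meets $C_{i+1}$ only in $A_i$. Hence $X_i=A_i$ forces $X_{i+1}=A_i$, and, reading the bijection $\varphi_{A_i}\colon C_i\to C_{i+1}$ backwards, $\varphi_{A_i}(X_i)=A_i$ with $X_i\in C_i$ forces $X_i=A_i$ as well.

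By Lemma~\ref{lem-lighthouse}, $X_{13}$ always lies on a single fixed circle $C_{13}$ through $A_1$ and $A_3$. First I choose $X_1$ so that $X_2=A_2$; then $\ell_1$ is the line $A_1A_2$, while the collapse gives $X_3=\varphi_{A_2}(A_2)=A_2$, so $\ell_3$ is the line $A_2A_3$. Both pass through $A_2$, hence $X_{13}=A_2$ and $A_2\in C_{13}$. Next I choose $X_1=A_4$; applying the collapse to the closing relation $\varphi_{A_4}(X_4)=X_1=A_4$ forces $X_4=A_4$, so $\ell_1$ is the line $A_4A_1$ and $\ell_3$ is the line $A_3A_4$, both through $A_4$, giving $X_{13}=A_4$ and $A_4\in C_{13}$. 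Thus the one circle $C_{13}$ contains all four points $A_1,A_2,A_3,A_4$, which simultaneously proves the contact points concyclic and identifies $C_{13}$ with their circumcircle $C$. Since $X_{13}\in C_{13}=C$ for every position, the first assertion follows. For $X_{24}$ the identical scheme applies after shifting indices by one: $X_{24}$ lies on the fixed circle $C_{24}$ through $A_2$ and $A_4$, and the position with $X_3=A_3$ yields $X_{24}=A_3$, so $C_{24}$ passes through $A_2,A_3,A_4$ and therefore equals $C$, whence $X_{24}\in C$.

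I expect the main obstacle to be bookkeeping around the degenerate positions rather than any deep difficulty: one must check that the chosen configurations are admissible, that the two lines defining $X_{13}$ stay distinct (so that their intersection really is the claimed contact point and is not ill-defined), and that Lemma~\ref{lem-lighthouse} may legitimately be invoked in these limiting configurations---most safely by appealing to continuity of $X_{13}$ as a function of $X_1$ together with the fact that $C_{13}$ is one fixed circle.
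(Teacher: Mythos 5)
Your proof is correct, and its core mechanism is the same as the paper's: use Lemma~\ref{lem-lighthouse} to obtain fixed circles $C_{13}$ and $C_{24}$ through $A_1,A_3$ and $A_2,A_4$ respectively, then pin those circles down by evaluating the intersection points at degenerate positions where the chain collapses onto contact points, via the tangent collapse $\varphi_{A_i}(A_i)=A_i$ for touching circles. (The closing assertion itself is, in both treatments, an immediate consequence of Theorem~\ref{thm-touching} with $n=4$ even; the paper does not even comment on it in its proof.) The one genuine difference is this: the paper's proof begins by \emph{recalling} the classical fact that the contact points of four touching circles always form a cyclic quadrilateral, and then needs only a single degenerate position ($X_2=A_2$, forcing $X_3=A_2$ and hence $X_{13}=A_2$) to conclude $C_{13}=C$, and similarly $C_{24}=C$. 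You instead spend a second degenerate position ($X_1=A_4$, where injectivity of $\varphi_{A_4}$ together with the already-established closing property forces $X_4=A_4$, hence $X_{13}=A_4$) to show that $C_{13}$ passes through all four contact points. This buys you self-containedness: the concyclicity of $A_1A_2A_3A_4$ falls out as a byproduct of your argument instead of being quoted as a known theorem, at the cost of one extra collapsed configuration. Your closing caveat about admissibility — that the two lines defining $X_{13}$ must remain distinct in the collapsed positions and that Lemma~\ref{lem-lighthouse} extends to them by continuity — addresses a point the paper silently glosses over, and your continuity argument is the right way to settle it.
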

\begin{proof}
First recall that the contact points $A_1,A_2,A_3,A_4$ of four touching circles form always a cyclic quadrilateral.
Observe that $X_{13}=A_2$ if $X_2=X_4=A_2$. Hence $C_{13}=C$. Similarly $C_{24}=C$, and we are done.
\end{proof}

\begin{figure}
\begin{center}
\begin{tikzpicture}[line cap=round,line join=round,x=36,y=36]
\draw [line width=0.8pt] (6.612310217078807,1.8365365087589656) circle (1.5617033393821818);
\draw [line width=0.8pt] (9.359764170706939,2.1125964753914594) circle (1.199584826036991);
\draw [line width=0.8pt] (8.948722963503094,4.190455192784244) circle (0.9185397509095068);
\draw [line width=0.8pt] (7.037925034990063,4.362635482626819) circle (1.);
\draw [line width=0.8pt,color=blue] (7.9913969456032135,3.12647545255923) circle (1.147201785980505);
\draw [line width=.8pt,color=red] (5.455938680414409,0.7869069600943537)-- (10.248003129837585,2.9188441459869754);
\draw [line width=.8pt,color=red] (10.248003129837585,2.9188441459869754)-- (8.268585323747033,3.573099489005661);
\draw [line width=.8pt,color=red] (8.268585323747033,3.573099489005661)-- (7.778380349071575,5.034741075408523);
\draw [line width=.8pt,color=red] (7.778380349071575,5.034741075408523)-- (5.455938680414409,0.7869069600943537);
\draw [line width=0.4pt,color=red] (8.268585323747033,3.573099489005661)-- (8.716517378255594,2.237502845609563);
\draw [line width=0.4pt,color=red] (8.268585323747033,3.573099489005661)-- (7.176586370283912,3.9340370069342914);
\begin{small}
\draw [fill=white] (8.166189333323992,1.9926679032142247) circle (1.5pt) node[right,yshift=-4pt,xshift=-.6pt] {$A_1$};
\draw[color=black] (5.2,2.9) node {$C_1$};
\draw[color=black] (10.74,2.) node {$C_2$};
\draw [fill=white] (9.126973899348828,3.2893770175569115) circle (1.5pt) node[above,xshift=5pt] {$A_2$};
\draw[color=black] (9.6,5.05) node {$C_3$};
\draw[color=black] (6.15,5.15) node {$C_4$};
\draw [fill=white] (6.871779794294898,3.3765341901652604) circle (1.5pt) node[below,xshift=5pt,yshift=1pt] {$A_4$};
\draw [fill=white] (8.033889766816543,4.272889992724433) circle (1.5pt) node[above,xshift=7pt,yshift=-2pt] {$A_3$};
\draw [fill=red] (5.455938680414409,0.7869069600943537) circle (1.5pt) node[red,left,xshift=2pt,yshift=-2pt] {$X_1$};
\draw [fill=red] (10.248003129837585,2.9188441459869754) circle (1.5pt)  node[red,right,xshift=-1pt,yshift=2pt] {$X_2$};
\draw [fill=red] (8.268585323747033,3.573099489005661) circle (1.5pt)  node[red,right,xshift=-1pt,yshift=3pt] {$X_3$};
\draw [fill=red] (7.778380349071575,5.034741075408523) circle (1.5pt) node[red,above,yshift=-5pt,xshift=8pt] {$X_4$};
\draw[color=blue] (8.85,4.08) node {$C$};
\draw [fill=red] (7.176586370283912,3.9340370069342914) circle (1.5pt) node[red,left,xshift=1pt,yshift=3pt] {$X_{24}$};
\draw [fill=red] (8.716517378255594,2.237502845609563) circle (1.5pt) node[red,right,yshift=-5pt,xshift=-3pt] {$X_{13}$};
\draw[color=red] (6.77,1.2) node {$\ell_1$};
\draw[color=red] (9.55,3.01) node {$\ell_2$};
\draw[color=red] (7.8,4.617) node {$\ell_3$};
\draw[color=red] (6.05,2.2) node {$\ell_4$};
\end{small}
\end{tikzpicture}
\caption{The special case of four touching circles in Corollary~\ref{cor-four-touching}.}\label{fig-four-touching}
\end{center}
\end{figure}
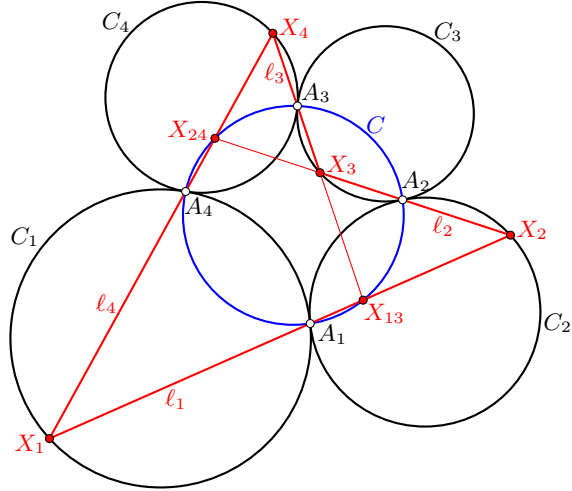

Now we can prove an extended version of Steiner's quadrilateral theorem.
\begin{corollary}\label{cor-steiner}
Consider the complete quadrilateral consisting of four lines $l_1, l_2, l_3, l_4$ such that $l_{i-1},l_{i}, l_{i+1}$ 
form a triangle with circumcircle $C_i$ for $i=1,2,3,4$ (see Figure~\ref{fig-steiner1}). 
Let $A_i$ denote the intersection of $l_{i}$ and $l_{i+1}$, $P$
the intersection of $l_2$ and $l_4$, and $Q$ the intersection of $l_1$ and $l_3$. 
Then, $\varphi:=\varphi_{A_4}\circ\varphi_{A_3}\circ\varphi_{A_2}\circ\varphi_{A_1}$ 
is the identity map on $C_1$, i.e., the resulting quadrilateral $X_1$, $X_{i+1}=\varphi_{A_i}(X_i)$ closes for any
initial point $X_1$ on $C_1$. 
The circles $C_1,C_2,C_3,C_4$ pass though a common point $S$, the Steiner point.
Moreover we have:
\begin{itemize}
\item The intersection $X_{13}$ of the lines $\ell_1$ and $\ell_3$ runs on a circle $C_{13}$ through $A_1,A_3$ and $S$.
\item The intersection $X_{24}$ of the lines $\ell_2$ and $\ell_4$ runs on a circle $C_{24}$ through $A_2,A_4$ and $S$.
\item The intersection $X$ of the lines $X_1X_3$ and $X_2X_4$ runs on a circle $C         $ through $P,Q        $ and $S$.
\item The points $X_1,X_2,X_3,X_4$ are concyclic on a circle $D$ through $S$
\item The points $X_1,X_3$ and $P$, as well as $X_2,X_4$ and $Q$ are collinear.
\end{itemize}
\end{corollary}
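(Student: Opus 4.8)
The plan is to separate the statement into the closing property, the existence of the Steiner point $S$, and the five incidence claims, with $S$ serving as the common hinge. First I would settle the closing property: the hypotheses are exactly those of Corollary~\ref{cor-fivecircles} with $n=4$, so $\varphi=\varphi_{A_4}\circ\varphi_{A_3}\circ\varphi_{A_2}\circ\varphi_{A_1}$ is the identity on $C_1$. Reading off the triangle vertices, $C_1$ and $C_3$ are the circumcircles of $l_4l_1l_2$ and $l_2l_3l_4$, so both pass through $P=l_2\cap l_4$; symmetrically $C_2$ and $C_4$ both pass through $Q=l_1\cap l_3$.

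The heart of the proof is the Steiner point. Since $C_1,\dots,C_4$ are precisely the four circumcircles of the four triangles of the complete quadrilateral, they concur in the Miquel point $S$, and then $C_i\cap C_{i+1}=\{A_i,S\}$, $C_1\cap C_3=\{P,S\}$, $C_2\cap C_4=\{Q,S\}$. This is the step I expect to be the main obstacle: Lemma~\ref{lem-lighthouse} with $n=4$ only produces concurrencies $C_{ij},C_{jk},C_{ki}$ for index triples, and the two diagonal circles $C_{13}$ and $C_{24}$ never occur together in such a triple. Consequently the lighthouse concurrencies by themselves place the second intersections of $C_1,C_4$ and of $C_2,C_3$ on $C_{13}$ but do not force them to coincide; the coincidence is exactly the Miquel incidence and must be supplied separately, e.g.\ by a short directed-angle chase modulo $\pi$ on $C_1,C_2,C_3$.

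With $S$ available the incidences fall out. Lemma~\ref{lem-lighthouse} yields the locus circle $C_{13}$ through $A_1,A_3$ carrying $X_{13}$; the concurrencies of $C_{13},C_{34},C_{41}$ and of $C_{12},C_{23},C_{13}$ put the (now identified) second intersections of $C_1,C_4$ and of $C_2,C_3$ onto $C_{13}$, both equal to $S$, so $C_{13}$ passes through $S$, and $C_{24}$ is symmetric. For the collinearities I would compare the two maps $\varphi_{A_2}\circ\varphi_{A_1}$ and $\varphi_P$ from $C_1$ to $C_3$: by the proof of Theorem~\ref{thm-1} every $\varphi_A$ acts as a rotation in the central-angle parameter (it preserves the differences $\sphericalangle X_iM_iX_i'$ and orientation), so any two such orientation-preserving compositions $C_1\to C_3$ that agree at one point coincide. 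Since $S$ lies on all the circles, each factor fixes $S$, hence both maps fix $S$ and therefore are equal; thus $X_3=\varphi_P(X_1)$, i.e.\ $X_1,X_3,P$ are collinear, and likewise $X_2,X_4,Q$ are collinear.

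It remains to treat $D$ and the circle $C$. For $D$ I would use the spiral similarity centred at $S$ taking $C_i$ to $C_{i+1}$ and $X_i$ to $X_{i+1}$ (the second intersection $A_i$ lying on the chords $X_iX_{i+1}$): the triangles $SX_iX_{i+1}$ have fixed shape, and a directed-angle computation using $S\in C_1\cap C_2\cap C_3\cap C_4$ then shows $X_1,X_2,X_3,X_4$ lie on one circle $D$ through $S$. For the final bullet, the collinearities just proved display $X_1X_3$ as a line through $P$ and $X_2X_4$ as a line through $Q$, both turning in projective correspondence as $X_1$ moves on $C_1$; the angle $\sphericalangle PXQ$ between them is constant, so $X$ runs on a fixed circle through $P$ and $Q$, and evaluating at $X_1=S$ (where the two chords degenerate to the lines $SP$ and $SQ$) shows this circle passes through $S$ as well.
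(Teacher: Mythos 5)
Your proposal is correct in outline and reaches every assertion, but it is organized in the reverse order from the paper's proof and rests on one imported ingredient the paper does not use. You start from the Miquel/Steiner concurrence of $C_1,C_2,C_3,C_4$ --- correctly observing that Lemma~\ref{lem-lighthouse} cannot produce it, since no admissible triple involves $C_{13}$ and $C_{24}$ together --- and then obtain the collinearity of $X_1,P,X_3$ by a rigidity argument: $\varphi_{A_2}\circ\varphi_{A_1}$ and $\varphi_P$ both preserve signed central angles (the observation behind Theorem~\ref{thm-1}) and both fix $S$, hence coincide. The paper runs the logic the other way. It first proves the collinearities \emph{without} $S$: in the quadrilateral $X_3PX_1X_2$ the angles at $X_1,X_2,X_3$ are inscribed angles on fixed chords of $C_1,C_2,C_3$, hence constant, and the angle at $P$ vanishes in the position $X_1=A_1$, so it vanishes always. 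This makes all quadrilaterals $X_1X_3X_2X_4$ similar to one another, and the Steiner point is then \emph{produced} rather than assumed: with $S$ defined as the second point of $C_1\cap C_3$, the position $X_1=S$ gives $X_3=\varphi_P(S)=S$, so by similarity the entire quadrilateral together with its diagonal points collapses to the single point $S$; this one degeneration simultaneously yields $S\in C_2$, $S\in C_4$, $S\in C_{13}$, $S\in C_{24}$, and (after re-reading the configuration as the chain $C_1,C_3,C_2,C_4$ with pivots $P,A_2,Q,A_4$ and applying Lemma~\ref{lem-lighthouse}) $S\in C$ as well. So the degeneration trick is exactly the device that replaces the ``separate directed-angle chase'' you flag as the main obstacle. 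Your route buys a slicker proof of the collinearity and a standard, robust spiral-similarity treatment of $D$; the paper's route buys self-containedness, since the concurrence falls out of its own closing machinery instead of being an external theorem.

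Two caveats if you execute your plan. First, since the concurrence of $C_1,\ldots,C_4$ is itself one of the claims of the corollary, the directed-angle chase cannot remain a citation; as written it is an IOU, and it must be carried out. Second, when you transfer the lighthouse concurrencies to $S$, note that Lemma~\ref{lem-lighthouse} only gives \emph{some} common point of $C_{13},C_4,C_1$, which a priori could be $A_4$ rather than the second intersection $S$; the clean repair is the same evaluation used elsewhere: at $X_1=S$ one has $\ell_1=SA_1$ and $\ell_3=SA_3$, hence $X_{13}=S$, which places $S$ on $C_{13}$ directly.
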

\begin{proof}
The fact that the polygon $X_1X_2X_3X_4$ closes for any position of the initial point $X_1$ on $C_1$ follows
directly from Corollary~\ref{cor-fivecircles}. 
Lemma~\ref{lem-lighthouse}  implies that $X_{13}$ and $X_{24}$ lie on circles $C_{13}$ and $C_{24}$ 
through $A_1,A_3$ and $A_2,A_4$, respectively.
The sum of the angles in the quadrilateral $X_3PX_1X_2$ is
$$
\sphericalangle X_3PX_1 + \sphericalangle PX_1X_2 + \sphericalangle X_1X_2X_3 + \sphericalangle X_2X_3P = 2\pi. 
$$
However, the last three of these angles do not depend on the position of $X_1$. Since $\sphericalangle X_3PX_1=0$ for $X_1=A_1$
it follows that $X_1,P,X_3$ are always collinear. Similarly, we have that $X_2,Q,X_4$ are collinear. 
Then, it follows that the quadrilateral $X_1X_3X_2X_4$ has fixed vertex and diagonal angles.
Therefore, all these quadrilaterals are similar when $X_1$ moves along $C_1$. 
If $X_1=A_1$ this is a cyclic quadrilateral (with circumcircle $C_2$), so this is always the case.
The circles $C_1$ and $C_3$ meet in a point $S$. If $X_1=S$ then $X_1=X_3$ and hence the
quadrilateral $X_1X_3X_2X_4$ with the diagonal points $X_{13}$ and $X_{24}$ degenerates to a point.  Therefore, also $C_2,C_4,C_{13}$ 
and $C_{24}$ meet in $S$.
If we consider the circles $C_1,C_3,C_2,C_4$ and the maps $\varphi_P,\varphi_{A_2},\varphi_Q,\varphi_{A_4}$ we
see that the polygon $X_1X_3X_2X_4$ is closed for every position of $X_1$ on $C_1$.
Therefore the intersection $X$ of $X_1X_3$ and $X_2X_4$ lies on a circle $C$ through $P$ and $Q$.
Also the point $X$ collapses together with the other points $X_i$ when $X_1=S$. Hence
$D$ passes also through $S$. It remains to show that the circumcircle $D$ of the points
$X_1,X_2,X_3,X_4$ passes through $S$ independent of the position of $X_1$ on $C_1$.
This follows from the fact that the angles $\sphericalangle A_4X_1S$ and $\sphericalangle SX_4A_4$
are independent of the position of $X_1$. Therefore this is also the case for $\sphericalangle X_1SX_4$.
Hence $X_1X_3X_4S$ is always a cyclic quadrilateral, since this is the case for the position $X_1=A_1$.
\end{proof}
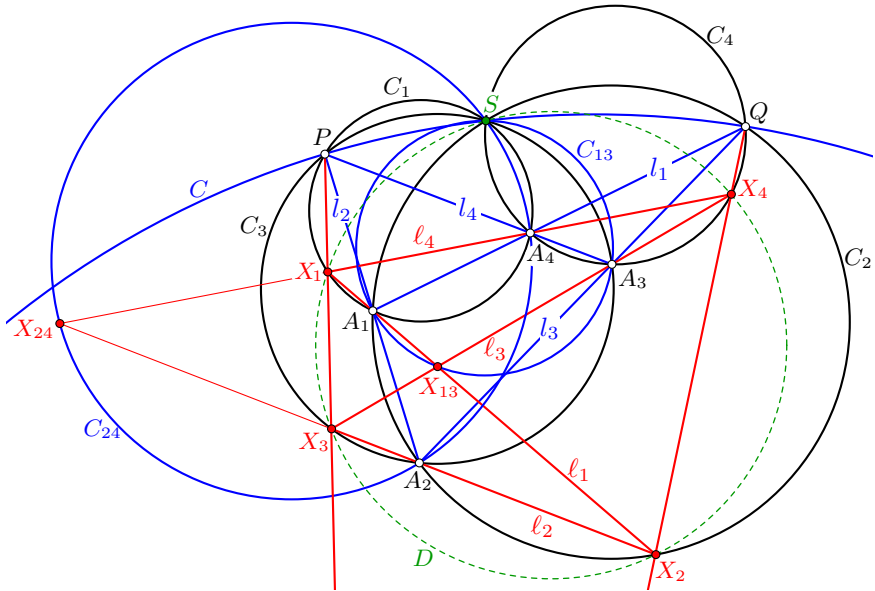
\begin{figure}[h]
\begin{center}
\begin{tikzpicture}[line cap=round,line join=round,x=12,y=12]
\clip(-9,-10.4) rectangle (18,8);
\draw [line width=0.8pt,color=blue] (9.35068740794378,-24.45598022864452) circle (28.9858322570776);
\draw [line width=0.8pt] (9.989835009681137,3.879616758843252) circle (4.065670887629065);
\draw [line width=0.8pt] (3.922395882864832,1.498239092470033) circle (3.481971839234633);
\draw [line width=0.8pt] (9.857158571976704,-2.002383045754656) circle (7.4408502443596936);
\draw [line width=0.8pt] (4.436035392822884,-0.9576187832263431) circle (5.499441458614712);
\draw [line width=0.8pt,color=blue] (3.8758420979772557,-6.428454169996841)-- (14.046709327073057,4.146919095908189) node[pos=.4,fill=white,inner sep=0pt]{$l_3$};
\draw [line width=0.8pt,color=blue] (14.046709327073057,4.146919095908189)-- (2.424883334437173,-1.6452596042888359) node[pos=.23,fill=white,inner sep=.8pt]{$l_1$};
\draw [line width=0.8pt,color=blue] (0.930830480550963,3.2799978676418196)-- (3.8758420979772557,-6.428454169996841) node[pos=.18,fill=white,inner sep=.8pt]{$l_2$};
\draw [line width=0.8pt,color=blue] (0.930830480550963,3.2799978676418196)-- (9.880875957504776,-0.18459382766224317) node[pos=.5,fill=white,inner sep=.8pt]{$l_4$};
\draw [line width=0.8pt,color=red] (1.0194609046609275,-0.42454441108498453)-- (11.251105618678402,-9.311498176191396) node[pos=.7,right]{$\ell_1$};
\draw [line width=0.8pt,color=red] (11.251105618678402,-9.311498176191396)-- (1.1374931588929638,-5.358013707288378) node[pos=.35,below,yshift=2pt]{$\ell_2$};
\draw [line width=0.8pt,color=red] (1.1374931588929638,-5.358013707288378)-- (13.604640789616619,2.018740905955818) node[pos=.41,below,yshift=1.8pt]{$\ell_3$};
\draw [line width=0.8pt,color=red] (13.604640789616619,2.018740905955818)-- (1.0194609046609275,-0.42454441108498453) node[pos=.76,above,yshift=-1.8pt]{$\ell_4$};
\draw [line width=0.8pt,color=blue] (5.909779926958069,0.3259798217377756) circle (4.003784351745917);
\draw [line width=0.8pt,color=blue] (-0.10502275827959455,-0.08246153009871746) circle (7.491255408102303);
\draw [line width=0.4pt,color=red] (1.0194609046609275,-0.42454441108498453)-- (-7.334275657245407,-2.0463378276837787);
\draw [line width=0.4pt,color=red] (-7.334275657245407,-2.0463378276837787)-- (1.1374931588929638,-5.358013707288378);

\draw [line width=.8pt,color=red,domain=0.930830480550963:18.852503924900756] plot(\x,{(--8.67933937994409-8.630640767381678*\x)/0.19685253254255441});
\draw [line width=.8pt,color=red,domain=-10.944377387727208:14.046709327073057] plot(\x,{(--28.156651244439097-2.1362390730372383*\x)/-0.4462296044012213});
\draw [line width=0.5pt,dash pattern=on 2pt off 2pt,color=darkgreen] (7.98768895727309,-2.7266027510419675) circle (7.344744260772971);

\begin{small}
\draw [fill=white] (3.8758420979772557,-6.428454169996841) circle (1.5pt) node[below,xshift=-.5pt] {$A_2$};
\draw [fill=white] (9.880875957504776,-0.18459382766224317) circle (1.5pt) node[anchor=north west,xshift=-1pt,yshift=1.3pt] {$A_3$};

\draw [fill=white,white] (7.51,.08) circle (3.9pt);
\draw [fill=white] (7.333915148232212,0.8013429802336188) circle (1.5pt) node[below,xshift=4pt,yshift=-1pt] {$A_4$};
\draw [fill=white] (2.424883334437173,-1.6452596042888359) circle (1.5pt)  node[anchor=north east,yshift=2.7pt,xshift=2.7pt] {$A_1$};
\draw [fill=white] (14.046709327073057,4.146919095908189) circle (1.5pt)node[above,xshift=5pt,yshift=-2pt]{$Q$};
\draw[color=black] (13.27,7) node {$C_4$};
\draw [fill=white] (0.930830480550963,3.2799978676418196) circle (1.5pt) node[above,xshift=-1pt]{$P$};
\draw[color=black] (3.2,5.37) node {$C_1$};
\draw[color=black] (17.57,0) node {$C_2$};
\draw[color=black] (-1.27,1) node {$C_3$};
\draw[color=blue] (-3,2.18) node {$C$};
\draw[color=darkgreen] (4,-9.4) node {$D$};

\draw [fill=white,white] (.445,-0.5) circle (4.8pt);
\draw [fill=white,white] (.3,-0.5) circle (4.8pt);
\draw [fill=red] (1.0194609046609275,-0.42454441108498453) circle (1.5pt) node[left,red,xshift=1.8pt] {$X_{\hspace{-.8pt}1}$};
\draw [fill=red] (11.251105618678402,-9.311498176191396) circle (1.5pt) node[below,red,xshift=5.7pt,yshift=.57pt] {$X_2$};
\draw [fill=red] (1.1374931588929638,-5.358013707288378) circle (1.5pt) node[red,below,xshift=-6.5pt,yshift=2pt] {$X_3$};
\draw [fill=red] (13.604640789616619,2.018740905955818) circle (1.5pt) node[right,red,yshift=2pt,xshift=-1pt] {$X_4$};

\draw [fill=white,white] (6.044,4.88) circle (4.35pt);
\draw [fill=darkgreen] (5.949139375030244,4.329570706073101) circle (1.5pt) node[darkgreen,above,xshift=2pt] {$S$};
\draw [fill=red] (4.445749117797924,-3.4005336862365834) circle (1.5pt) node[red,below,yshift=-.99pt,xshift=.6pt] {$X_{13}$};
\draw[color=blue] (9.37,3.4) node {$C_{13}$};
\draw [fill=red] (-7.334275657245407,-2.0463378276837787) circle (1.5pt) node[left,red,xshift=1.3pt,yshift=-2pt] {$X_{24}$};
\draw[color=blue] (-6,-5.355) node {$C_{24}$};
\end{small}
\end{tikzpicture}
\caption{Steiner's quadrilateral theorem follows from Corollary~\ref{cor-fivecircles} and Lemma~\ref{lem-lighthouse} when we take $n=4$.}\label{fig-steiner1}
\end{center}
\end{figure}

Figure~\ref{fig-last} illustrates Lemma~\ref{lem-lighthouse} applied to a closed chain of three intersection circles
$C_1,C_2,C_3$ with intersection points $A_i,B_i$. It follows from Theorem~\ref{thm-ab} that
$\varphi_{B_3}\circ \varphi_{B_2}\circ \varphi_{B_1}\circ \varphi_{A_3}\circ \varphi_{A_2}\circ \varphi_{A_1}$
is the identity map on $C_1$, i.e., the corresponding polygon $X_1X_2\ldots X_6X_1$ closes 
for every position of the starting point $X_1$ on $C_1$. The intersections $X_{ij}$ of the lines
$\ell_i$ and $\ell_j$ lie on the blue circles $C_{ij}$, where $C_{24}=C_{46}=C_{62}=:C_{246}$ and
 $C_{13}=C_{35}=C_{51}=:C_{135}$. Moreover $C_{25},C_{36}$ and $C_{14}$ are touching $C_{246}$ and $C_{135}$.

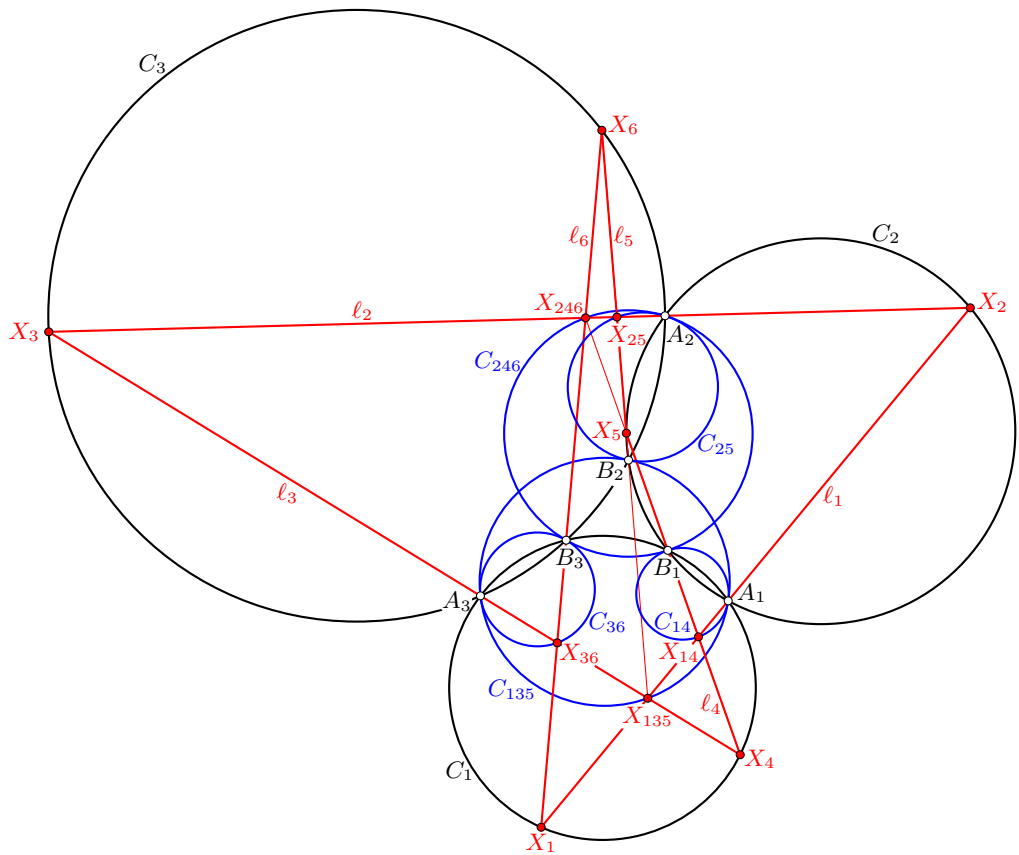
\begin{figure}
\begin{center}
\begin{tikzpicture}[line cap=round,line join=round,x=28,y=28]
\draw [line width=0.8pt] (1.2076756325551938,-0.18310705716457115) circle (2.048869230914672);
\draw [line width=0.8pt] (4.125537993195853,3.275100185076211) circle (2.5991780004543856);
\draw [line width=0.8pt] (-2.077621247573548,4.831293444084563) circle (4.121041194814751);
\draw [line width=0.8pt,color=blue] (1.553464829548858,3.24459679821055) circle (1.6602443728314136);
\draw [line width=0.8pt,color=blue] (1.236710465226126,1.2462043002339191) circle (1.6708761097233742);
\draw [line width=0.8pt,color=red] (0.38820125302843567,-2.0609587197535295)-- (6.123100185646636,4.938070889970703);
\draw [line width=0.8pt,color=red] (6.123100185646636,4.938070889970703)-- (-6.1929578433388475,4.614532670615132);
\draw [line width=0.8pt,color=red] (-6.1929578433388475,4.614532670615132)-- (3.047984777553276,-1.0837331595096615);
\draw [line width=0.4pt,color=red] (3.047984777553276,-1.0837331595096615)-- (0.9808998306590092,4.8029872177026105);
\draw [line width=0.8pt,color=red] (1.5264886485955338,3.2492393579682437)--(3.047984777553276,-1.0837331595096615);

\draw [line width=0.4pt,color=red] (1.199145072277406,7.330449904662937)-- (1.8129420089459765,-0.32216561214853684);
\draw [line width=0.8pt,color=red] (1.199145072277406,7.330449904662937)--(1.5264886485955338,3.2492393579682437) ;

\draw [line width=0.8pt,color=red] (1.199145072277406,7.330449904662937)-- (0.38820125302843567,-2.0609587197535295);

\draw [line width=0.8pt,color=blue] (2.2771987453138425,1.0844758912901236) circle (0.6178937075630226);
\draw [line width=0.8pt,color=blue] (0.33836830731081596,1.143730439261059) circle (0.7667082465084613);
\draw [line width=.8pt,color=blue] (1.7471751216085785,3.871762647634691) circle (1.003844586166484);
\begin{small}
\draw[color=black] (-0.7,-1.3) node {$C_1$};
\draw[color=black] (5,5.937401621205894) node {$C_2$};

\draw [fill=white,white] (1.4,2.68) circle (4.pt) ;
\draw [fill=white] (1.5555935547831958,2.8863690935516075) circle (1.5pt) node[left,xshift=2pt,yshift=-4pt] {$B_2$};
\draw[color=black] (-4.8,8.223054932018373) node {$C_3$};

\draw [fill=white,white] (-0.8,.96) circle (4.5pt);
\draw [fill=white,white] (-0.7,.977) circle (4.5pt);
\draw [fill=white] (-0.4233999092799136,1.0568355633692141) circle (1.5pt) node[left,xshift=0pt,yshift=-2pt] {$A_3$};

\draw [fill=white,white] (0.7222342783560433,1.6) circle (3.6pt) ;
\draw [fill=white,white] (0.7222342783560433,1.55) circle (3.4pt) ;

\draw [fill=white] (0.7222342783560433,1.807423479406565) circle (1.5pt) node[below,xshift=1.5pt,yshift=0pt] {$B_3$};

\draw [fill=white,white] (2.080902201065873,1.5) circle (2.3pt);
\draw [fill=white,white] (2.14,1.36) circle (3.3pt);
\draw [fill=white] (2.080902201065873,1.6703600953460813) circle (1.5pt)  node[below,xshift=0pt,yshift=0pt] {$B_1$};
\draw [fill=white] (2.887760858925848,0.9895731027767276) circle (1.5pt) node[right,xshift=0pt,yshift=3pt] {$A_1$};
\draw [fill=red] (0.38820125302843567,-2.0609587197535295) circle (1.5pt) node[red,below,xshift=0pt,yshift=1pt] {$X_1$};
\draw [fill=white] (2.043419928385371,4.830899221660277) circle (1.5pt) node[below,xshift=5.9pt,yshift=-.5pt] {$A_2$};
\draw [fill=red] (6.123100185646636,4.938070889970703) circle (1.5pt) node[red,right,xshift=-1pt,yshift=2pt] {$X_2$};
\draw [fill=red] (-6.1929578433388475,4.614532670615132) circle (1.5pt) node[red,left,xshift=0pt,yshift=0pt] {$X_3$};
\draw [fill=red] (3.047984777553276,-1.0837331595096615) circle (1.5pt) node[right,red,xshift=-1.8pt,yshift=-1.6pt] {$X_4$};
\draw [fill=red] (1.5264886485955338,3.2492393579682437) circle (1.5pt) node[red,left,xshift=2pt,yshift=.8pt] {$X_5$};
\draw [fill=red] (1.199145072277406,7.330449904662937) circle (1.5pt) node[red,right,xshift=-1pt,yshift=2pt] {$X_6$};
\draw [fill=red] (0.9808998306590092,4.8029872177026105) circle (1.5pt) node[red,above,xshift=-9.8pt,yshift=-1.5pt] {$X_{246}$};

\draw [fill=white,white] (1.7,-0.6) circle (5.5pt);
\draw [fill=white,white] (1.5,-0.6) circle (3.5pt);
\draw [fill=red] (1.8129420089459765,-0.32216561214853684) circle (1.5pt) node[red,below,xshift=0pt,yshift=0pt] {$X_{135}$};
\draw[color=blue] (-0.18598524276883427,4.2197804941456285) node {$C_{246}$};
\draw[color=blue] (-0.,-.2) node {$C_{135}$};
\draw[color=red] (4.3,2.4) node {$\ell_1$};
\draw[color=red] (-2,4.9) node {$\ell_2$};
\draw[color=red] (-3,2.44) node {$\ell_3$};
\draw[color=red] (2.66,-.4) node {$\ell_4$};
\draw[color=red] (1.5,5.9) node {$\ell_5$};
\draw[color=red] (0.9,5.9) node {$\ell_6$};

\draw [fill=white,white] (2.3,0.3) circle (4pt);
\draw [fill=white,white] (2.26,0.26) circle (3.4pt);
\draw [fill=red] (2.490290453844386,0.5044892038553924) circle (1.5pt) node[red,left,xshift=3.8pt,yshift=-5.8pt] {$X_{14}$};
\draw[color=blue] (2.15,.7) node {$C_{14}$};

\draw [fill=white,white] (0.86,0.25) circle (6.5pt) ;
\draw [fill=white,white] (0.91,0.25) circle (6.5pt) ;

\draw [fill=red] (0.6027813741428346,0.42405880208715574) circle (1.5pt) node[red,right,xshift=-2.5pt,yshift=-4pt] {$X_{36}$};
\draw[color=blue] (1.28,0.7) node {$C_{36}$};

\draw [fill=white,white] (1.4009813420845583,4.57) circle (4.2pt);
\draw [fill=red] (1.4009813420845583,4.814022601253984) circle (1.5pt) node[red,below,xshift=4pt,yshift=0pt] {$X_{25}$};
\draw[color=blue] (2.73,3.1) node {$C_{25}$};
\end{small}
\end{tikzpicture}
\caption{Lemma~\ref{lem-lighthouse} applied to a closed chain of three intersecting circles.}\label{fig-last}
\end{center}
\end{figure}

As a final remark we mention the following.
\begin{corollary}\label{cor-verylast}
Let $C_1\mathbin{\intersect} C_2 \mathbin{\intersect} C_3\mathbin{\intersect} \cdots \mathbin{\intersect}C_n\mathbin{\intersect}C_1$ be a closed
chain of circles. Let $A_i,B_i$ be the intersection points of $C_i$ and $C_{i+1}$, where $A_i=B_i$ is allowed. 
Let $\delta_{A_i}:=\sphericalangle A_iM_iB_i, \gamma_{A_i}:=\sphericalangle B_iM_{i+1}A_i$ and 
$\varphi:=\varphi_{A_n}\circ\cdots\circ\varphi_{A_2}\circ\varphi_{A_1}$. 
If $\sum_{i=1}^n \delta_{A_i}+\gamma_{A_i}$ is a rational multiple of $\pi$, then there is a natural number $k$ such that the map $\varphi^k$ is the identity
on $C_1$. That is, starting with any point $X$ on $C_1$, the resulting polygon will eventually close on $X$ after a finite number of steps. 
The points $X_i X_{i+n} X_{i+2n} X_{i+kn}$ form a regular polygon in the circle $C_i$ (see Figure~\ref{fig-verylast}).
\end{corollary}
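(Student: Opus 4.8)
The plan is to identify the composition $\varphi$ with an honest rotation of the circle $C_1$, and then to read off both the finite order and the regular-polygon structure from the elementary dynamics of a rotation by a rational multiple of $2\pi$.

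First I would introduce, for a point $P$ on $C_i$, the angular coordinate $\vartheta_i(P)\in\mathbb{R}/2\pi\mathbb{Z}$ given by the direction of the radius vector $\overrightarrow{M_iP}$ in one fixed frame of the plane. Read in this coordinate, Definition~\ref{def-mu} says that the auxiliary point $X'$ occurring there satisfies $\vartheta_{i+1}(X')=\vartheta_i(X)$ (that is exactly what $\overrightarrow{M_{i+1}X'}\parallel\overrightarrow{M_iX}$ means), while $\varphi_{A_i}(X)$ sits at signed angular distance $\mu_{A_i}$ from $X'$. Hence each elementary map acts as a pure shift,
$$
\vartheta_{i+1}\bigl(\varphi_{A_i}(X)\bigr)=\vartheta_i(X)+\mu_{A_i},
$$
and composing around the chain shows that $\varphi$ is the rotation of $C_1$ by
$$
\mu:=\sum_{i=1}^n\mu_{A_i}=n\pi-\tfrac12\sum_{i=1}^n(\delta_{A_i}+\gamma_{A_i}),
$$
where I used Lemma~\ref{lem-transfer}; consequently $\varphi^k$ is the rotation by $k\mu$. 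This refines Theorem~\ref{thm-closingcondition}, which is precisely the statement that $\varphi=\mathrm{id}$ iff $\mu\in 2\pi\mathbb{Z}$.

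Next, since $\sum_{i=1}^n(\delta_{A_i}+\gamma_{A_i})$ is assumed to be a rational multiple of $\pi$, so is $\mu$, and I would write $\mu=2\pi\,a/b$ with $\gcd(a,b)=1$. A circle rotation by such an angle has order $b$, so $\varphi^k=\mathrm{id}$ holds exactly when $b\mid k$; taking $k=b$ gives the asserted finite closing, the polygon closing after $b$ trips around the chain. For the regular-polygon statement I would fix a residue $i$ and pass to the loop map $\psi_i:=\varphi_{A_{i-1}}\circ\cdots\circ\varphi_{A_1}\circ\varphi_{A_n}\circ\cdots\circ\varphi_{A_i}\colon C_i\to C_i$, which by the same computation is the rotation of $C_i$ by the same $\mu$. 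The points $X_i,X_{i+n},X_{i+2n},\ldots$ are its orbit $X_i,\psi_i(X_i),\psi_i^2(X_i),\ldots$; as $\gcd(a,b)=1$ the multiples $ja\bmod b$ for $j=0,\ldots,b-1$ run through all residues, so the orbit consists of $b$ equally spaced points on $C_i$, i.e. the vertices of a regular $b$-gon.

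The hard part will be the first step, namely checking that the transfer-angle shifts are genuinely additive around the \emph{whole} loop. The point to be careful about is that all coordinates $\vartheta_i$ must refer to one and the same global frame, so that the parallel identifications between consecutive centers contribute nothing and the only net effect of each step is adding $\mu_{A_i}$ (with a consistent sign, as reflected in the relation $\mu_B=-\mu_A$ used for Theorem~\ref{thm-2}); this additivity is already implicit in the proof of Theorem~\ref{thm-1}, where the angular difference $\sphericalangle X_iM_iX_i'$ is transported unchanged along the chain. Everything after that is the standard theory of rational circle rotations, and the touching case $A_i=B_i$ needs no separate treatment since it merely contributes $\delta_{A_i}=\gamma_{A_i}=0$, i.e. $\mu_{A_i}=\pi$.
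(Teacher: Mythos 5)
Your proof is correct and follows exactly the route the paper intends: the paper states Corollary~\ref{cor-verylast} without proof, as an immediate consequence of the transfer-angle picture behind Theorem~\ref{thm-closingcondition}, namely that $\varphi$ acts on $C_1$ as the rotation by $\sum_{i=1}^n \mu_{A_i} = n\pi - \frac12\sum_{i=1}^n(\delta_{A_i}+\gamma_{A_i})$, so that a rational angle forces finite order. Your write-up simply makes explicit what the paper leaves implicit --- the global-frame additivity of the shifts, the conjugated return maps $\psi_i$ on each $C_i$, and the $\gcd$ argument giving the order $b$ and the regular $b$-gon orbit structure.
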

Observe that Corollary~\ref{cor-verylast} can be interpreted as a stacked version of  the Lighthouse theorem~\cite{guy}:
Each pair of points $A_i,A_{i+1}$ can be considered as lighthouses while the lines $\ell_k$ passing through them correspond to
the light beams.
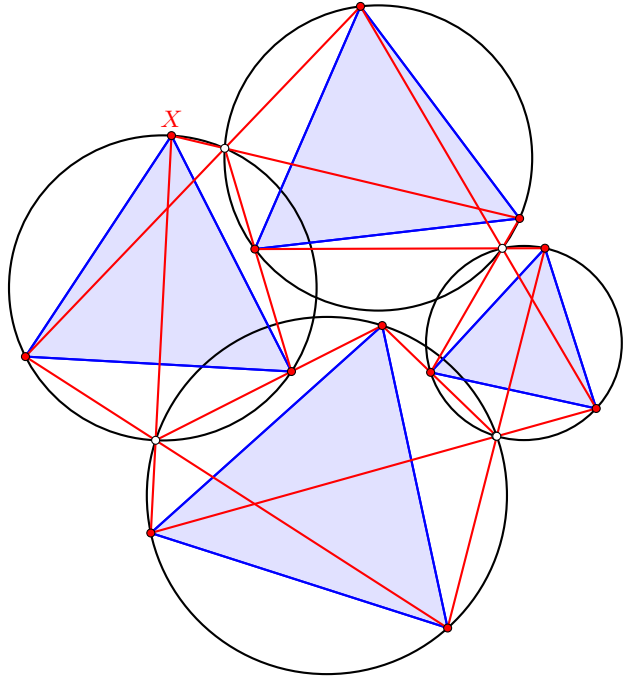
\begin{figure}
\begin{center}
\definecolor{blue}{rgb}{0.,0.,1.}
\begin{tikzpicture}[line cap=round,line join=round,x=12,y=12]
\draw[line width=.8pt,color=blue,fill=blue!12] (-3.602292108902038,-8.033988857302733) -- (5.653488048177753,-11.021902390165895) -- (3.6132069934086704,-1.5122048758593065) -- cycle;
\draw[line width=.8pt,color=blue,fill=blue!12] (-7.511228918234917,-2.488194286183146) -- (0.7827632059491996,-2.9573546661425096) -- (-2.957928048648881,4.460033402168683) -- cycle;
\draw[line width=.8pt,color=blue,fill=blue!12] (8.688396009725679,0.9198226975831079) -- (10.286379547201962,-4.119450465323652) -- (5.123249202777405,-2.983708222154042) -- cycle;
\draw[line width=.8pt,color=blue,fill=blue!12] (2.936472766822022,8.522831093639889) -- (7.897287676426038,1.8546790087165022) -- (-0.3579088802177808,0.8925633159885209) -- cycle;
\draw [line width=0.8pt] (3.4919505210100965,3.7566911394483067) circle (4.798400316599185);
\draw [line width=0.8pt] (8.032674919901682,-2.061111996631531) circle (3.0522027781280348);
\draw [line width=0.8pt] (-3.2287979203115356,-0.3285051833856544) circle (4.796193519106301);
\draw [line width=0.8pt] (1.8881343108947917,-6.856032041109314) circle (5.615368601616422);
\draw [line width=.8pt,color=blue] (-3.602292108902038,-8.033988857302733)-- (5.653488048177753,-11.021902390165895);
\draw [line width=.8pt,color=blue] (5.653488048177753,-11.021902390165895)-- (3.6132069934086704,-1.5122048758593065);
\draw [line width=.8pt,color=blue] (3.6132069934086704,-1.5122048758593065)-- (-3.602292108902038,-8.033988857302733);
\draw [line width=.8pt,color=blue] (-7.511228918234917,-2.488194286183146)-- (0.7827632059491996,-2.9573546661425096);
\draw [line width=.8pt,color=blue] (0.7827632059491996,-2.9573546661425096)-- (-2.957928048648881,4.460033402168683);
\draw [line width=.8pt,color=blue] (-2.957928048648881,4.460033402168683)-- (-7.511228918234917,-2.488194286183146);
\draw [line width=.8pt,color=blue] (8.688396009725679,0.9198226975831079)-- (10.286379547201962,-4.119450465323652);
\draw [line width=.8pt,color=blue] (10.286379547201962,-4.119450465323652)-- (5.123249202777405,-2.983708222154042);
\draw [line width=.8pt,color=blue] (5.123249202777405,-2.983708222154042)-- (8.688396009725679,0.9198226975831079);
\draw [line width=.8pt,color=blue] (2.936472766822022,8.522831093639889)-- (7.897287676426038,1.8546790087165022);
\draw [line width=.8pt,color=blue] (7.897287676426038,1.8546790087165022)-- (-0.3579088802177808,0.8925633159885209);
\draw [line width=.8pt,color=blue] (-0.3579088802177808,0.8925633159885209)-- (2.936472766822022,8.522831093639889);
\draw [line width=0.8pt,color=red] (-2.957928048648881,4.460033402168683)-- (-3.602292108902038,-8.033988857302733);
\draw [line width=0.8pt,color=red] (2.936472766822022,8.522831093639889)-- (-7.511228918234917,-2.488194286183146);
\draw [line width=0.8pt,color=red] (-7.511228918234917,-2.488194286183146)-- (5.653488048177753,-11.021902390165895);
\draw [line width=0.8pt,color=red] (-3.602292108902038,-8.033988857302733)-- (10.286379547201962,-4.119450465323652);
\draw [line width=0.8pt,color=red] (-2.957928048648881,4.460033402168683)-- (7.897287676426038,1.8546790087165022);
\draw [line width=0.8pt,color=red] (8.688396009725679,0.9198226975831079)-- (5.653488048177753,-11.021902390165895);
\draw [line width=0.8pt,color=red] (-0.3579088802177808,0.8925633159885209)-- (8.688396009725679,0.9198226975831079);
\draw [line width=0.8pt,color=red] (-1.2967690316272096,4.061339534369917)-- (0.7827632059491996,-2.9573546661425096);
\draw [line width=0.8pt,color=red] (10.286379547201962,-4.119450465323652)-- (2.936472766822022,8.522831093639889);
\draw [line width=0.8pt,color=red] (-3.4519810400220665,-5.119503155394571)-- (3.6132069934086704,-1.5122048758593065);
\draw [line width=0.8pt,color=red] (3.6132069934086704,-1.5122048758593065)-- (7.185587679434274,-4.993412289208981);
\draw [line width=0.8pt,color=red] (7.897287676426038,1.8546790087165022)-- (5.123249202777405,-2.983708222154042);
\begin{small}
\draw [fill=white] (7.359000900499327,0.9158168091001025) circle (1.5pt);
\draw [fill=white] (7.158953323455583,-4.985586713690337) circle (1.5pt);
\draw [fill=white] (-1.2967690316272096,4.061339534369917) circle (1.5pt);
\draw [fill=white] (-3.4519810400220665,-5.119503155394571) circle (1.5pt);
\draw [fill=white] (7.185587679434274,-4.993412289208981) circle (1.5pt);
\draw [fill=red] (-3.602292108902038,-8.033988857302733) circle (1.5pt);
\draw [fill=red] (10.286379547201962,-4.119450465323652) circle (1.5pt);
\draw [fill=red] (2.936472766822022,8.522831093639889) circle (1.5pt);
\draw [fill=red] (-7.511228918234917,-2.488194286183146) circle (1.5pt);
\draw [fill=red] (5.653488048177753,-11.021902390165895) circle (1.5pt);
\draw [fill=red] (8.688396009725679,0.9198226975831079) circle (1.5pt);
\draw [fill=red] (-0.3579088802177808,0.8925633159885209) circle (1.5pt);
\draw [fill=red] (0.7827632059491996,-2.9573546661425096) circle (1.5pt);
\draw [fill=red] (3.6132069934086704,-1.5122048758593065) circle (1.5pt);
\draw [fill=red] (5.123249202777405,-2.983708222154042) circle (1.5pt);
\draw [fill=red] (7.897287676426038,1.8546790087165022) circle (1.5pt);
\draw [fill=red] (-2.957928048648881,4.460033402168683) circle (1.5pt) node[above,red]{$X$};
\end{small}
\end{tikzpicture}
\caption{Illustration of Corollary~\ref{cor-verylast} with four circles such that the polygon closes after three runs.
Then the vertices on each circle form a regular triangle. The triangles ``dance'' synchronized pirouettes when $X$ runs on the circle.
If three of the circles are given, it is an easy exercise  to
construct a matching fourth circle.}\label{fig-verylast}
\end{center}
\end{figure}

\bibliographystyle{plain}

\end{document}